\numberwithin{equation}{section}
\newenvironment{enumeratei}
{\begin{enumerate}[\upshape (i)]}
{\end{enumerate}}
\newenvironment{enumerate1}
{\begin{enumerate}[\upshape (1)]}
{\end{enumerate}}
\newtheorem*{namedtheorem}{\theoremname}
\newcommand{\theoremname}{testing}
\theoremstyle{plain}
\newtheorem{theorem}{Theorem}[section]
\newtheorem{proposition}[theorem]{Proposition}
\newtheorem{proposition-definition}[theorem]{Proposition-Definition}
\newtheorem{lemma-definition}[theorem]{Lemma-Definition}
\newtheorem{corollary}[theorem]{Corollary}
\newtheorem{lemma}[theorem]{Lemma}
\theoremstyle{definition}
\newtheorem{definition}[theorem]{Definition}
\newtheorem{notation}[theorem]{Notation}
\newtheorem{example}[theorem]{Example}
\newtheorem{examples}[theorem]{Examples}
\newtheorem{remark}[theorem]{Remark}
\newtheorem{construction}[theorem]{Construction}
\theoremstyle{remark}
\newcommand\ul[1]{\underline{#1}}
\newcommand\ocM{\overline{\mathcal{M}}}
\newcommand\cl{\mathrm{cl}}
\newcommand\scrM{\mathscr{M}}
\newcommand\Int{\operatorname{Int}}
\newcommand{\ev}{\mathrm{ev}}
\newcommand\ulf{\underline{f}}
\newcommand\ulC{\underline{C}}
\newcommand\cA{\mathcal{A}}
\newcommand\cC{\mathcal{C}}
\newcommand\cF{\mathcal{F}}
\newcommand\cG{\mathcal{G}}
\newcommand\cK{\mathcal{K}}
\newcommand\cL{\mathcal{L}}
\newcommand\cM{\mathcal{M}}
\newcommand\cO{\mathcal{O}}
\newcommand\cP{\mathcal{P}}
\newcommand\cX{\mathcal{X}}
\newcommand\uB{\underline{B}}
\newcommand\uC{\underline{C}}
\newcommand\uf{\underline{f}}
\newcommand\uX{\underline{X}}
\renewcommand\AA{\mathbb{A}}
\newcommand\CC{\mathbb{C}}
\newcommand\FF{\mathbb{F}}
\newcommand\GG{\mathbb{G}}
\newcommand\kk{\Bbbk}
\newcommand\NN{\mathbb{N}}
\newcommand\PP{\mathbb{P}}
\newcommand\QQ{\mathbb{Q}}
\newcommand\RR{\mathbb{R}}
\newcommand\ZZ{\mathbb{Z}}
\newcommand\bA{\mathbf{A}}
\newcommand\bC{\mathbf{C}}
\newcommand\bE{\mathbf{E}}
\newcommand\bL{\mathbf{L}}
\newcommand\bM{\mathbf{M}}
\newcommand\bg{\mathbf{g}}
\newcommand\bp{\mathbf{p}}
\newcommand\bu{\mathbf{u}}
\newcommand\fC{\mathfrak{C}}
\newcommand\fE{\mathfrak{E}}
\newcommand\fM{\mathfrak{M}}
\newcommand\ttau{{\widetilde\tau}}
\newcommand\bsigma{\boldsymbol{\sigma}}
\newcommand\Cones{\mathbf{Cones}}
\newcommand\arr{\ifinner\to\else\longrightarrow\fi}
\newcommand\larr{\longrightarrow}
\newcommand\into{\hookrightarrow}
\newcommand\im{\operatorname{im}}
\def\displaytimes_#1{\mathrel{\mathop{\times}\limits_{#1}}}
\def\displayotimes_#1{\mathrel{\mathop{\bigotimes}\limits_{#1}}}
\newcommand\Aut{\operatorname{Aut}}
\newcommand\tor{{\operatorname{tor}}}
\newcommand\sing{\mathrm{sing}}
\newcommand\reg{\mathrm{reg}}
\newcommand\pic{\operatorname{Pic}}
\newcommand\Spec{\operatorname{Spec}}
\newcommand\virt{{\operatorname{virt}}}
\newcommand\pr{\operatorname{pr}}
\newcommand\doublelong[2]{\mathbin{\xymatrix{{}\ar@<3pt>[r]^{#1}
\ar@<-3pt>[r]_{#2}&}}}
\newlength{\ignora}
\newcommand{\ind}{\operatorname{Ind}}
\renewcommand{\red}{{\mathrm{red}}}
\newcommand{\ol}{\overline}
\newcommand{\Log}{{\operatorname{Log}}}
\newcommand{\gp}{{\mathrm{gp}}}
\renewcommand{\setminus}{\smallsetminus}
\newcommand{\Hom}{{\operatorname{Hom}}}
\def\mapright#1{\smash{
\mathop{\longrightarrow}\limits^{#1}}}
\begin{document}

\title{Decomposition of degenerate Gromov-Witten invariants}

\subjclass{14N35 (14D23)}
\keywords{Logarithmic Gromov-Witten invariant, moduli stack, logarithmic stable
map, degeneration, decomposition, tropical curve, tropical map, rigid tropical
curve, Artin fan} 

\author{Dan Abramovich}
\address{\tiny Department of Mathematics, Brown University, Box 1917,
Providence, RI~02912, USA}
\email{abrmovic@math.brown.edu}
\thanks{Research by D.A.\ was supported in part by NSF grants DMS-1162367,
DMS-1500525 and DMS-1759514}

\author{Qile Chen}
\address{\tiny Department of Mathematics, Boston College, Chestnut Hill,
MA~02467-3806, USA}
\email{qile.chen@bc.edu}
\thanks{Research by Q.C. was supported in part by NSF grant DMS-1403271 and DMS-
1560830.}

\author{Mark Gross}
\address{\tiny DPMMS, Centre for Mathematical Sciences, Wilberforce Road,
Cambridge, CB3 0WB, UK}
\email{mgross@dpmms.cam.ac.uk}
\thanks{M.G.\ was supported by NSF grant DMS-1262531, EPSRC grant EP/N03189X/1
and a Royal Society Wolfson Research Merit Award.}

\author{Bernd Siebert}
\address{\tiny Department of Mathematics, The Univ.\ of Texas at Austin,
2515 Speedway, Austin, TX 78712, USA}
\email{siebert@math.utexas.edu}
\thanks{Research by B.S.\ was partially supported by NSF grant DMS-1903437}

\date{\today}
\begin{abstract}
We prove a decomposition formula of logarithmic Gromov-Witten invariants in a
degeneration setting. A one-parameter log smooth family $X\arr B$ with singular
fibre over $b_0\in B$ yields a family $\scrM(X/B,\beta)\arr B$ of moduli stacks
of stable logarithmic maps. We give a virtual decomposition of the fibre of this
family over $b_0$ in terms of rigid tropical maps to the tropicalization of
$X/B$. This generalizes one aspect of known results in the case that the fibre
$X_{b_0}$ is a normal crossings union of two divisors. We exhibit our formulas
in explicit examples.
\end{abstract}

\maketitle

\tableofcontents

%=================================================================================
\section{Introduction}

%---------------------------------------------------------------------------------
\subsection{Statement of results}
One of the main goals of logarithmic Gromov--Witten theory is to relate the
Gromov--Witten invariants of a smooth projective variety to invariants of a
degenerate variety $X_0$.

Consider a logarithmically smooth and projective morphism $X \arr B$, with $B$ a
logarithmically smooth curve having a single closed point $b_0\in B$ where the
logarithmic structure is nontrivial. In the language of \cite{KKMS,AK}, this is
the same as saying that the underlying schemes $\uX$ and $\uB$ are provided with
a toroidal structure such that $\uX\arr \uB$ is a toroidal morphism, and
$\underline{\{b_0\}}\subset \uB$ is the toroidal divisor. One defines as in
\cite{GS}, see also \cite{Chen, AC}, an algebraic stack $\scrM(X/B,\beta)$
parameterizing {\em stable logarithmic maps $f: C \arr X$ with discrete data
$\beta = (g,A,u_{p_1},\ldots,u_{p_k})$} from logarithmically smooth curves to
$X$. Here
\begin{itemize}
\item
$g$ is the genus of $C$,
\item
$A$ is the homology class $\uf_*[\uC]$, which we assume is supported on
fibres of $\uX \arr \uB$, and
\item
$u_{p_1},\ldots,u_{p_k}$ are the {\em contact orders} of the marked points with
the logarithmic strata of $X$.
\end{itemize}

Writing $\ul\beta = (g,k,A)$ for the non-logarithmic discrete data, there is a
natural morphism $\scrM(X/B,\beta) \arr \scrM(\uX/\uB,{\ul\beta})$ ``forgetting
the logarithmic structures", which is proper and representable \cite[Thm.~1.1.1]{ACMW14}.
The map $\scrM(X/B,\beta) \arr \scrM(\uX/\uB,{\ul\beta})$ is in
fact finite, see \cite[Cor.~1.2]{Wise-finite}. There is also a natural
morphism $\scrM(X/B,\beta) \arr B$, and we denote its fibre over $b\in B$ by
$\scrM(X_b/b,\beta)$.

Since $X\arr B$ is logarithmically smooth there is a perfect relative
obstruction theory $\bE^\bullet \arr \bL_{\scrM(X/B,\beta)\,/\,\Log_B}$
in the sense of \cite{Behrend-Fantechi}, hence defining a virtual
fundamental class $[\scrM(X/B,\beta)]^\virt$ and logarithmic
Gromov--Witten invariants.

An immediate consequence of the formalism is the following {(this is indicated
after \cite[Thm.~0.3]{GS})}:
\begin{theorem}[Logarithmic deformation invariance]
\label{Th:deformation}
For any point $\{b\}\stackrel{j_b}\into B$ one has
$$j_b^![\scrM(X/B,\beta)]^\virt = [\scrM(X_b/b,\beta)]^\virt.$$
\end{theorem}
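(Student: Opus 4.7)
The plan is to deduce this from the standard compatibility of virtual fundamental classes with Gysin pullback (Behrend--Fantechi, Kim--Kresch--Pantev, Manolache), once one checks that (i) the base change $\scrM(X/B,\beta)\times_B\{b\}$ is identified with $\scrM(X_b/b,\beta)$ as logarithmic stacks, and (ii) the perfect relative obstruction theory $\bE^\bullet\to\bL_{\scrM(X/B,\beta)/\Log_B}$ restricts to the analogous obstruction theory for $\scrM(X_b/b,\beta)\to\Log_b$.

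First I would verify (i). By the construction of $\scrM(X/B,\beta)$ as a moduli of stable log maps with target $X/B$, an $S$-point is a log map $f\colon C\to X$ over $S\to B$ with discrete data $\beta$. Restricting to $S$-points mapping to $b\in B$ (i.e.\ factoring through $\{b\}\hookrightarrow B$), the data becomes a stable log map $C\to X_b$ over $S\to \{b\}$; this is exactly an $S$-point of $\scrM(X_b/b,\beta)$. Note the log structure on $\{b\}$ is the pullback of that on $B$, so the log structure on the fibre of $X\to B$ is $X_b$ with its natural log structure, and the matching of contact orders is automatic. This gives a canonical isomorphism $\scrM(X/B,\beta)\times_B\{b\}\simeq\scrM(X_b/b,\beta)$.

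Next, I would address (ii). The logarithmic obstruction theory is built from $\bR\pi_*f^*T_{X/B}^{\log}$ where $\pi\colon\cC\to\scrM(X/B,\beta)$ is the universal curve and $T_{X/B}^{\log}$ is the relative log tangent bundle of $X\to B$. Because $X\to B$ is log smooth, $T_{X/B}^{\log}$ is locally free, and its formation commutes with arbitrary base change; in particular its pullback to $X_b$ is $T_{X_b/b}^{\log}$. Combined with flat base change for $\bR\pi_*$ along the Cartesian square obtained from pulling back to the fibre, this identifies the pullback of $\bE^\bullet$ with the obstruction theory for $\scrM(X_b/b,\beta)\to\Log_b$. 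Here one also uses that $\Log_B\times_B\{b\}\simeq\Log_b$, which follows from Olsson's construction of $\Log$ and the fact that the log structure on $\{b\}$ is pulled back from $B$.

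With (i) and (ii) in place, the theorem follows from the compatibility of virtual classes with Gysin pullback for a Cartesian diagram of relative perfect obstruction theories, applied to $\{b\}\hookrightarrow B$ and its base change $\scrM(X_b/b,\beta)\hookrightarrow\scrM(X/B,\beta)$. The only non-formal step is (ii); this is the place where logarithmic smoothness of $X\to B$ is essential, since without it $T_{X/B}^{\log}$ would fail to be locally free and base change for the obstruction complex could fail. Everything else is formal bookkeeping inside the established framework of log Gromov--Witten theory of \cite{GS,Chen,AC}.
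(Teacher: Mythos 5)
Your proposal is correct and follows essentially the same route as the paper: it identifies the fibre $\scrM(X/B,\beta)\times_B\{b\}$ with $\scrM(X_b/b,\beta)$ via Cartesian squares over $\Log_B$ (resp.\ $\fM_B$), observes that the relative obstruction theory $(R\pi_*f^*T_{X/B})^{\vee}$ pulls back to the corresponding obstruction theory on the fibre, and then invokes compatibility of virtual classes with refined Gysin pullback \`a la Behrend--Fantechi/Manolache. The one small discrepancy is that the paper's actual obstruction theory is taken relative to $\fM_B$ (the stack of prestable log curves over $B$), which is smooth over $\Log_B$, rather than directly over $\Log_B$ as you write; this does not affect the argument.
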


This implies, in particular, that Gromov--Witten invariants of $X_b$ agree with
those of $X_0=X_{b_0}$. Now holomorphic curves in $X_0$ come in various
families depending on the intersection pattern with the irreducible components
of $X_0$. Thus one may hope that logarithmic Gromov-Witten invariants similarly
group according to some discrete data reflecting such intersection patterns. The
main result of this paper shows that this is indeed the case, with the
intersection patterns recorded in an interesting and very transparent fashion in
terms of the underlying tropical geometry.

\begin{theorem}[The logarithmic decomposition formula; Theorem~\ref{Thm: Main} below]
\label{Thm: Decomposition}
Suppose the morphism $X_0 \arr b_0$ is logarithmically smooth and $X_0$ is
simple. Then we have the following equality in the Chow group of
$\scrM(X_0/b_0,\beta)$ with coefficients in $\QQ$:
\[
[\scrM(X_0/b_0,\beta)]^\virt = \sum_{\ttau=(\tau,\bA)}
\frac{m_\tau}{|\Aut(\tau)|}\, {j_\ttau}_*[\scrM(X_0,\ttau)]^\virt.
\]
\end{theorem}

See Definition~\ref{Def:simple} for the notion of simple logarithmic structures.
The notations $\scrM(X_0,\ttau)$, $m_\tau$ and $j_\ttau$ are briefly explained
as follows. First, the \emph{tropicalization of $X_0\arr b_0$} defines a
polyhedral complex $\Delta(X_0)$ (\S\ref{ss:logschtrop} and
\S\ref{Sec:basic-gives-universal-tropical}), and $\tau$ stands for a \emph{rigid
tropical map to $\Delta(X)$} (Definition~\ref{Def: Rigid tropical map}). Each
such rigid $\tau$ comes with a multiplicity $m_{\tau} \in \NN$, the smallest
integer such that scaling $\Delta(X)$ by $m_\tau$ leads to a tropical curve with
integral vertices and edge lengths.

The symbol $\bA$ stands for a partition of the curve class $A\in H_2(X)$ into
classes $\bA(v)$, one for each vertex $v$ in the graph underlying $\tau$.

The moduli stack $\scrM(X_0,\ttau)$ is the stack parameterizing basic stable
logarithmic maps to $X_0$ over $b_0$ decorated by $\ttau=(\tau,\bA)$
(Definition~\ref{Def: ttau-marked stable map}). The marking exhibits $\ttau$ as
a degeneration of the tropicalization of any stable logarithmic map in this
moduli stack. The map $j_\ttau:\scrM(X_0,\ttau)\arr \scrM(X_0/b_0,\beta)$
forgets the marking by $\ttau$.

\begin{remark}
In general, the sum over $\ttau$ will be infinite, but because
the moduli space $\scrM(X_0/b_0,\beta)$ is of finite type, all but
a finite number of the moduli spaces
$\scrM(X_0,\ttau)$ will be empty. In practice one uses the
balancing condition \cite[Prop.~1.15]{GS} to control how curves can break
up into strata of $X_0$. This is carried out in some of the examples in
\S\ref{examplessection}.
\end{remark}

Theorems~\ref{Th:deformation} and \ref{Thm: Decomposition} form the first two
steps toward a general logarithmic degeneration formula. In many cases this is
sufficient for meaningful computations, as we show in \S\ref{examplessection}.
These results have precise analogies with results in \cite{Jun2}, as explained
in \S\ref{sec:JunLi}. Theorem~\ref{Th:deformation} is a generalization of
\cite[Lem.~3.10]{Jun2}, while Theorem~\ref{Thm: Decomposition} is a
generalization of part of \cite[Cor.~3.13]{Jun2}, where the notation
${\mathfrak M}(\mathfrak{Y}_1^{rel}\cup \mathfrak{Y}_2^{rel},\eta)$ describes an
object playing the role of our $\scrM(X_0,\ttau)$.

The current paper does not, however, include a description of the moduli stack
$\scrM(X_0,\ttau)$ analogous to that given in the proof of
\cite[Lem.~3.14]{Jun2}. There, the moduli space is described by gluing together
relative stable maps to the individual components of $X_0$. However, in general
this will not be the case: while a curve in $\scrM(X_0,\ttau)$ may be glued
schematically from stable maps to individual components of $X_0$, it is not
possible to do this at the logarithmic level, in the sense that the maps to
individual components of $X_0$ may not be interpretable as relative maps. We
give an example in \S\ref{sec:Cubic} in which $X_0$ has three components meeting
normally, with one triple point. Our example features a log curve contributing
to the Gromov-Witten invariant which has a component contracting to the triple
point, and this curve cannot be interpreted as a relative curve on any of the
three irreducible components of $X_0$.

In fact, a new theory is needed to give a more detailed description of the
moduli spaces $\scrM(X_0,\ttau)$ in terms of pieces of simpler curves. In the
follow-up paper \cite{punctured} we define \emph{stable punctured maps}
admitting negative contact orders to replace the relative curves in Jun Li's
gluing formula. Crucially, we will explain how punctured curves can be glued
together to describe the moduli spaces $\scrM(X_0,\ttau)$.

The results described here are analogous to results of Brett Parker proved in
his category of exploded manifolds. He defines Gromov-Witten invariants in this
category in the series of papers \cite{Parker: reg,Parker:
cmp,Parker: Kuranishi,Parker: vfc}. The analogue of logarithmic
deformation invariance, Theorem~\ref{Th:deformation} above, is proved in
\cite[Thms.~5.20 and 5.22]{Parker: vfc}, while Theorem~\ref{Thm: Decomposition}
is analogous to parts of \cite[Thm.~5.22 and Lem.~7.3]{Parker: vfc}. A gluing
formula in terms of Gromov-Witten invariants of individual irreducible
components of $X_0$ is given in \cite[Thms.~4.7 and 5.2]{Parker: gluing}. The aim
in proving a general gluing formula is a full logarithmic analogue of these
theorems.
\vspace{2ex}

This paper has a somewhat long genesis, with the main ideas contained in draft
versions first presented in a talk by B.S.\ at the conference ``Algebraic,
Analytic, and Tropical Geometry'' in Ein Gedi/Israel in Spring~2013. A first
full version was posted on Q.C.'s website in October~2016. The follow-up paper
\cite{punctured} has furthermore been distributed via M.G.'s website since
March~2017.

Several related works have appeared during this long period of preparation.
The 2016 version has been used in \cite{MandelRuddat}. Concerning the
decomposition formula, the one closest to our point of view is
\cite{KimLhoRuddat}, giving a formula of logarithmic Gromov-Witten invariants of the
central fiber $X_0$ of a degeneration with smooth singular locus in terms of
Gromov-Witten invariants of the reducible components. This paper is a full
logarithmic analogue of Jun Li's formula in \cite{Jun2}, without using expanded
degenerations. This case is considerably simpler than the case with points of
multiplicity greater than~$2$ and in particular does not require the
introduction of punctured Gromov-Witten invariants, see \ref{sec:JunLi}
and \cite{punctured}.

A gluing formula for a special case has also been proved by Tony Yue Yu in his
developing theory of Gromov-Witten invariants in rigid analytic geometry
\cite[Thm.~1.2]{Yu}.

Very recently, Ranganathan has suggested an alternative approach to fully
general gluing formulas for logarithmic Gromov-Witten invariants using expanded
degenerations \cite{Ranganathan}.

The structure of the paper is as follows. In \S\ref{sec:prelim}, we review
various aspects of logarithmic Gromov-Witten theory, with a special emphasis on
the relationship with tropical geometry. We develop tropical geometry in the
setup of generalized cone complexes, introduced in \S\ref{Subsect: Cone
complexes}. While this point of view was present in \cite{GS}, we make it more
explicit here, and in particular discuss tropicalization in a sufficient degree
of generality as needed here. As an application, in \ref{Subsect: tropically
decorated moduli spaces} we introduce the refined moduli spaces $\fM(X_0,\ttau)$
appearing in the decomposition formula. \S\ref{artinfansec} reviews the
notion of Artin fans, an algebraic stack associated to any generalized cone
complex. Our decomposition result is based on a decomposition of the fundamental
class in a moduli space of stable log maps to the Artin fan of $X_0$ over $b_0$.

\S\ref{sec:torictovirtual} proves the main theorem, the decomposition
formula. In \S\ref{Sec:decomposition-toroidal} we first prove a general
decomposition of the fundamental class for a space log smooth over the standard
log point. The main insight in \S\ref{Subsect: stable maps to cX} is that
replacing $X_0$ with its relative Artin fan, the moduli space of stable log maps
becomes unobstructed, hence has a fundamental class that can be decomposed. The
main theorem then follows in \S\ref{Subsect: Proof of Main Thm} by lifting this
decomposition to the virtual level.

The remainder of the paper is devoted to applications. As a preparation,
\S\ref{calculationalsection} closes a gap in the literature, building on work of
Nishinou and Siebert in \cite{NS06}. This concerns the \emph{logarithmic
enhancement problem}, the problem of constructing stable logarithmic maps with a
given usual stable map, previously considered only in special cases. We address
the problem through a two-step process. In the first step, we use the tropical
geometry of the situation to identify a proper, birational, logarithmically
\'etale map --- a logarithmic modification --- which reduces the problem to a
situation where no irreducible component of the domain curve maps into the
singular locus of $X_0$ and maps no node into strata of $X_0$ of codimension
larger than~$1$. The second step is the main result of
\S\ref{calculationalsection}, Theorem~\ref{curveconstruction}, giving the number
of logarithmic enhancements in fully general situations, including non-reduced
$X_0$.

\S\ref{examplessection} employs these formulas in the discussion of a
number of hopefully instructive examples. \S\ref{sec:JunLi} contains the already
announced discussion of our decomposition formula in the traditional situation of
\cite{Jun2}. In \S\ref{sec:Cubic} we retrieve the classical number $12$ of nodal
plane sections of a cubic surface passing through two points via a degeneration
into three $\PP^2$'s, blown up in $0,3$ and $6$ points, respectively. The topic of
\S\ref{pointdegensection} is an interpretation of the imposing of point
conditions in tropical geometry via degenerating scheme-theoretic point
conditions in the trivial product $Y\times\AA^1$. The decomposition formula in
this case (Theorem~\ref{Th:decomposition-points}) provides an alternative view
on tropical map counting with point conditions as in \cite{Mikhalkin05,NS06}.
The final section \S\ref{Subsect: FF2} features an example with two rigid
tropical maps such that only one of them arises as the tropicalization of a
stable log map, but the contribution to the virtual count comes from the other, non-realizable rigid tropical map.

%---------------------------------------------------------------------------------
\subsection{Acknowledgements}
That there are analogies with Parker's work is not an accident: We received a
great deal of inspiration from his work and had many fruitful discussions with
him. We also benefited from discussions with Steffen Marcus, Dhruv
Ranganathan, Ilya Tyomkin, Martin Ulirsch and Jonathan Wise.

%---------------------------------------------------------------------------------
\subsection{Conventions}
\label{Sec:convention}
All logarithmic schemes and stacks we consider here are fine and saturated and
defined over an algebraically closed field $\kk$ of characteristic $0$. We will
usually only consider toric monoids, i.e., monoids of the form $P=P_{\RR}\cap M$
for $M\simeq\ZZ^n$, $P_{\RR}\subset M_{\RR}=M\otimes_{\ZZ} \RR$ a rational
polyhedral cone. For $P$ a toric monoid, we write
\[
P^{\vee}=\Hom(P,\NN),\quad P_\RR^{\vee}= \Hom(P,\RR_{\ge0}),\quad P^*=\Hom(P,\ZZ).
\]
For $Q$ a toric monoid and $\varphi:Q\to R$ a homomorphism to the multiplicative
monoid of the $\kk$-algebra $R$, the notation $\Spec(Q\arr R)$ denotes $\Spec R$
with the log structure induced by $\varphi$. For our conventions
concerning graphs see \S\ref{Par: dual graphs}.

%=================================================================================
\section{Preliminaries}
\label{sec:prelim}

%---------------------------------------------------------------------------------
\subsection{Cone complexes associated to logarithmic stacks}
\label{Subsect: Cone complexes}

%---------------------------------------------------------------------------------

\subsubsection{The category of cones}
We consider the category of rational polyhedral cones, which we denote by
$\Cones$. The objects of $\Cones$ are pairs $\sigma = (\sigma_\RR, N)$ where
$N\simeq\ZZ^n$ is a lattice and $\sigma_\RR\subset N_{\RR}=N\otimes_{\ZZ}\RR$
is a top-dimensional strictly convex rational polyhedral cone. A morphism of
cones $\varphi:\sigma_1\arr \sigma_2$ is a homomorphism
$\varphi:N_1\arr N_2$ which takes ${\sigma_1}_\RR$ into ${\sigma_2}_\RR$.
Such a morphism is a \emph{face morphism} if it identifies ${\sigma_1}_\RR$ with
a face of ${\sigma_2}_\RR$ and $N_1$ with a saturated sublattice of $N_2$. If we
need to specify that $N$ is associated to $\sigma$ we write $N_\sigma$ instead.

%---------------------------------------------------------------------------------
\subsubsection{Generalized cone complexes}
\label{ss:cone-complex}
Recall from \cite[II.1]{KKMS} and \cite{ACP} that a
\emph{generalized cone complex} is a topological space with a presentation as
the colimit of an arbitrary finite diagram in the category $\Cones$ with all
morphisms being face morphisms. If $\Sigma$ denotes a generalized cone complex,
we write $\sigma\in\Sigma$ if $\sigma$ is a cone in the diagram yielding
$\Sigma$, and write $|\Sigma|$ for the underlying topological space. A morphism
of generalized cone complexes $f:\Sigma\arr\Sigma'$ is a continuous map
$f:|\Sigma|\arr |\Sigma'|$ such that for each $\sigma_\RR\in\Sigma$, the induced
map $\sigma\arr |\Sigma'|$ factors through a morphism $\sigma\arr\sigma'\in
\Sigma'$. For a cone $\sigma\in\Cones$, we use the same symbol $\sigma$
to also denote the cone complex of all its faces.

Note that two generalized cone complexes can be isomorphic yet not have the same
presentation. This phenomenon does not occur for so-called
\emph{reduced} presentations, which have the defining property that every face
of a cone in the diagram is in the diagram, and every isomorphism in the diagram
is a self-map. By \cite[Prop.~2.6.2]{ACP} any generalized cone complex
has such a reduced presentation. In this paper we only work with reduced presentations of generalized cone complexes.

%---------------------------------------------------------------------------------
\subsubsection{Generalized polyhedral complexes}
\label{ss:poly-complex}
We can similarly define a \emph{generalized polyhedral complex}, where in the
above set of definitions pairs $(\sigma_\RR,N)$ live in the category
$\mathbf{Poly}$ of rationally defined polyhedra. This is more general than
cones, as any cone $\sigma$ is in particular a polyhedron (usually unbounded).
For example, an affine slice of a fan is a polyhedral complex.

%---------------------------------------------------------------------------------
\subsubsection{The tropicalization of a logarithmic scheme}
\label{ss:logschtrop}
Now let $X$ be a Zariski fs log scheme of finite type. For the generic point
$\eta$ of a stratum of $X$, its characteristic monoid $\overline\cM_{X,\eta}$
defines a dual monoid $(\overline\cM_{X,\eta})^{\vee} :=
\Hom(\overline\cM_{X,\eta}, \NN)$ lying in the group $(\overline\cM_{X,\eta})^* :=
\Hom(\overline\cM_{X,\eta}, \ZZ)$, see \S\ref{Sec:convention}, hence a
dual cone
\begin{equation}
\label{Eqn: sigma_eta}
\sigma_{\eta}:=\big((\overline\cM_{X,\eta})^{\vee}_\RR,\,
(\overline\cM_{X,\eta})^*\big).
\end{equation}

If $\eta$ is a specialization of $\eta'$, then there is a well-defined
generization map $\overline\cM_{X,\eta}\arr\overline\cM_{X,\eta'}$ since
we assumed $X$ is a Zariski logarithmic scheme. Dualizing, we obtain a face
morphism $\sigma_{\eta'}\arr \sigma_{\eta}$. This gives a diagram of
cones indexed by strata of $X$ with face morphisms, and hence gives a
generalized cone complex $\Sigma(X)$. We call this the \emph{tropicalization} of
$X$, following \cite[Appendix~B]{GS}.\footnote{This terminology differs
slightly from that of \cite{Ulirsch}, where the tropicalization is a canonically
defined map from the Thuillier analytification $X^\beth$ of $X$ to the
compactified cone complex. Hopefully this will not cause confusion.} For $\sigma\in\Sigma(X)$ we denote by
\[
X_\sigma\subset X
\]
the closure of the corresponding stratum of $X$, endowed with the reduced
induced scheme structure. We refer to these subschemes with reduced induced
structure as \emph{closed strata} of $X$.

This construction is functorial: given a morphism of log schemes
$f:X\arr Y$, the map $f^{\flat}:f^{-1}\overline{\cM}_Y
\arr\overline{\cM}_X$ induces a map of generalized cone complexes
$\Sigma(f):\Sigma(X)\arr\Sigma(Y)$.

\begin{definition}\cite[Def.~B.2]{GS}
\label{Def:simple}
We say $X$ is \emph{monodromy free} if $X$ is a Zariski log scheme and for every
$\sigma\in\Sigma(X)$, the natural map $\sigma\arr |\Sigma(X)|$ is injective on
the interior of any face of $\sigma$. We say $X$
is \emph{simple} if the map is injective on every $\sigma$.
\end{definition}

Here is an example of a Zariski log structure that is monodromy free, but not
simple. Take $X$ to be the Neron $2$-gon, the fibred sum of two $\PP^1$'s
joined at two pairs of points, Thus $X$ has two irreducible components $X_1,X_2$
and two nodes $q_1,q_2$. Take a log structure $\cM_X$ on $X$ with $\ocM_X$
constant with fibres $\NN^2$ along $X_1$, with fibers $\NN$ on
$X_2\setminus\{q_1,q_2\}$ and with generization maps $\ocM_{X,q_i}=\NN^2\to \NN$
to the generic point of $X_2$ the two projections. See also \cite[Expl.~B.1]{GS}
for another example.

Simplicity is, however, true in the Zariski log smooth case over a trivial log point. Such log schemes can in fact be viewed as toroidal pairs without self-intersections and the statement follows readily from the classical treatment in \cite[p.70--72]{KKMS}:

\begin{proposition}
\label{Prop: simplicity criterion}
Let $X$ be a Zariski log scheme, log smooth over $\Spec \kk$ with the trivial log structure. Then $X$ is simple.
\end{proposition}

As remarked in \cite{GS}, more generally we can define the generalized
cone complex associated with a finite type logarithmic stack $X$, in particular
allowing for logarithmic schemes $X$ in the \'etale topology. In fact, one can
always find a cover $X'\arr X$ in the smooth topology with $X'$ a union
of simple log schemes, and with $X''=X'\times_{X} X'$; then define $\Sigma(X)$
to be the colimit of $\Sigma(X'')\rightrightarrows\Sigma(X')$. The resulting
generalized cone complex is independent of the choice of cover. This process is
explicitly carried out in \cite{ACP} and \cite{Ulirsch-thesis}.

\begin{examples}
\begin{asparaenum}
\item 
If $X$ is a toric variety with the canonical toric logarithmic structure, then
$\Sigma(X)$ is abstractly the fan defining $X$. It is missing the embedding of
$|\Sigma(X)|$ as a fan in a vector space $N_{\RR}$, and should be viewed as a
piecewise linear object.

\item
Let $\kk$ be a field and $X=\Spec(\NN\arr \kk)$ the standard log point with
$\cM_X=\kk^{\times} \times\NN$. Then $\Sigma(X)$ consists of the ray $\RR_{\ge
0}$.

\item
Let $C$ be a curve with an \'etale logarithmic structure with
the property that $\overline\cM_C$ has stalk $\NN^2$ at any geometric point, but
has monodromy of the form $(a,b)\mapsto (b,a)$, so that the pull-back of
$\overline\cM_C$ to an unramified double cover $C'\arr C$ is constant
but $\overline\cM_C$ is only locally constant. Then $\Sigma(C)$ can be described
as the quotient of $\RR_{\ge 0}^2$ by the automorphism $(a,b)\mapsto (b,a)$. If
we use the reduced presentation, $\Sigma(C)$ has three cones, one each of
dimension $0$, $1$ and $2$.
\end{asparaenum}
\end{examples}

%---------------------------------------------------------------------------------
\subsection{Artin fans}
\label{artinfansec}

Let $X$ be a fine and saturated algebraic log stack.
We are quite permissive with algebraic stacks, as delineated in 
\cite[(1.2.4)--(1.2.5)]{LogStack}, since we need to work with stacks with 
non-separated diagonal. An Artin stack logarithmically \'etale over
$\Spec\kk$ is called an {\em Artin fan}.

The logarithmic structure of $X$ is encoded by a morphism $X \arr \Log$
to Olsson's stack $\Log$ of fine log structures, see \cite{LogStack}.
One crucial idea developed in the context of the present paper is a
refinement of the stack $\Log$ by an Artin fan that takes into account the
stratification of $X$ defined by $\ol\cM_X$. Following preliminary notes written
by two of us (Chen and Gross), the paper \cite{AW} introduces a canonical Artin
fan $\cA_X$ associated to a logarithmically smooth fs log scheme $X$. This was
generalized in \cite[Prop.~3.1.1]{ACMW14}:

\begin{theorem}
Let $X$ be a logarithmic algebraic stack over $\Spec\kk$ which is
locally connected in the smooth topology. Then there is an initial strict
\'etale morphism $\cA_X\arr \Log$ over which $X\arr \Log$ factors. Moreover,
the morphism $\cA_X\arr\Log$ is representable by algebraic spaces.
\end{theorem}

Note that $\cA_X$ in the theorem is indeed an Artin fan because $\Log$ is logarithmically \'etale over $\Spec\kk$.

If $X$ is a Deligne-Mumford stack, $\cA_X$ can be
constructed from the cone complex $\Sigma(X)$ as
follows. For any cone $\sigma\subset N_{\RR}$, let $P=\sigma^{\vee}\cap M$ be
the corresponding monoid. We write 
\begin{equation}
\label{APdefinition}
\cA_\sigma=\cA_P:=\big[\Spec\kk[P]/\Spec \kk[P^{\gp}]\big].
\end{equation}
This stack carries the standard toric logarithmic structure induced by
descent from the global chart $P \arr \kk[P]$. Then $\cA_X$ is the colimit
\begin{equation}
\label{Eqn: cover for cA_X}
\cA_X=\varinjlim_{\sigma\in\Sigma(X)} \cA_\sigma,
\end{equation}
in the category of sheaves over $\Log$.

\begin{remark}
Unlike $\Sigma(X)$, the formation of $\cA_X$ is not functorial for all
logarithmic morphisms $Y \arr X$. This is a result of the fact that the morphism
$Y \arr \Log$ is not the composition $Y \arr X \arr \Log$, unless $Y \arr X$ is
strict. Note also that not all Artin fans $\cA$ are of the form $\cA_X$, since
$\cA \arr \Log$ may fail to be representable.
\end{remark}

Our next aim is to prove functoriality of the formation of Artin fans for maps
with Zariski log smooth domains, stated as Proposition~\ref{lem:Zariski-factor}
below. We need two lemmas.

\begin{lemma}
\label{lem:Zariski-cover}
Suppose $X$ is a log smooth scheme over the trivial log point
$\Spec\kk$ and with Zariski log structure. Then $\cA_X$ admits a Zariski open
covering $\{\cA_{\sigma} \subset \cA_X\,|\, \sigma\in\Sigma(X)\}$.
\end{lemma}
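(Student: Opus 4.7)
The plan is to exploit the Zariski hypothesis to build an explicit strict cover of $X$ by standard local models $\cA_\sigma$, and then transport this via the universal factorization property of $\cA_X$ to exhibit $\cA_X$ itself as a Zariski gluing of those $\cA_\sigma$'s.

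First, I would observe that since $X$ is a Zariski log scheme, every point of $X$ lies in an affine Zariski open $U \subset X$ on which $\overline{\cM}_X$ admits a global chart $P \to \overline{\cM}_X|_U$ for some toric monoid $P$. Letting $\sigma$ denote the dual cone so that $\cA_\sigma = \cA_P$ in the notation of \eqref{APdefinition}, this chart makes the induced map $U \to \cA_\sigma$ strict. The universal factorization $X \to \cA_X \to \Log$ of \cite[Prop.~3.1.1]{ACMW14}, applied to the composite $\cA_\sigma \to \Log$, then produces a canonical morphism $\cA_\sigma \to \cA_X$ over $\Log$ through which the strict map $U \to \cA_X$ factors. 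As $U$ varies over such charted opens, these $U$ form a Zariski open cover of $X$.

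Second, I would verify that each structure map $\cA_\sigma \to \cA_X$ is an open immersion. Using the colimit description in Proposition~\ref{Artinfanaslimit}, $\cA_X$ is the colimit over $\Log$ of the diagram $i \mapsto \cA_{s(i)}$ associated to the reduced presentation of $\Sigma(X)$, with all transition maps being face morphisms. Each face morphism $\tau \hookrightarrow \sigma$ corresponds under \eqref{APdefinition} to the equivariant open immersion $\cA_\tau \hookrightarrow \cA_\sigma$ obtained by inverting a monoid element dual to the face. The Zariski hypothesis ensures that distinct cones in the reduced presentation of $\Sigma(X)$ correspond to distinct strata of $X$, so the colimit computes as an honest Zariski gluing of the $\cA_\sigma$ along their common open substacks $\cA_\tau$. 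Consequently each natural map $\cA_\sigma \to \cA_X$ is an open immersion.

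Combining the two steps, the open substacks $\cA_\sigma \subset \cA_X$ coming from the chart cover $\{U\}$ exhaust $\cA_X$, since $X \to \cA_X$ is strict and surjective and any preimage in $X$ of a point of $\cA_X$ lies in some $U$ factoring strictly through one of the $\cA_\sigma$. The main obstacle I expect is the second step: passing from the abstract colimit of the diagram of $\cA_\sigma$'s to a genuine Zariski gluing, which requires controlling how faces of cones in $\Sigma(X)$ can be identified in $|\Sigma(X)|$, and verifying that the Zariski assumption rules out precisely the pathologies (of the sort illustrated in the étale Example~(3) above) that would spoil the open immersion property of the structure maps.
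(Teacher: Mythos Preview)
Your approach differs from the paper's and has gaps that would need real work to close.

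First, a misstep in Step~1: the \emph{initial} factorization property of $\cA_X$ produces maps \emph{from} $\cA_X$ to other factorizations of $X\to\Log$, not maps \emph{to} $\cA_X$. What actually yields $\cA_\sigma\to\cA_X$ is the initial property of $\cA_U=\cA_\sigma$: since $U\hookrightarrow X$ is strict, the composite $U\to X\to\cA_X\to\Log$ is a factorization of $U\to\Log$ through a strict \'etale representable map, and initiality of $\cA_U$ gives $\cA_U\to\cA_X$. Alternatively this is just the structure map of the colimit in Proposition~\ref{Artinfanaslimit}, making Step~1 redundant with Step~2.

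The substantive gap is Step~2, as you yourself flag. You assert that the colimit of $\sigma\mapsto\cA_\sigma$ as sheaves over $\Log$ is a Zariski gluing, so that each structure map $\cA_\sigma\to\cA_X$ is an open immersion. Proposition~\ref{Artinfanaslimit} only identifies $\cA_X$ with this colimit; it says nothing about the structure maps. Note that the individual $\cA_\sigma\to\Log$ are \'etale but not monomorphisms (e.g.\ $\cA_{\NN^2}\to\Log$ sees the $\ZZ/2$ swap in its fibres), so one cannot argue via subobjects of $\Log$. Moreover, your claim that the Zariski hypothesis alone controls the shape of the diagram is too optimistic: the paper notes (citing \cite[Example~B.1]{GS}) that Zariski log schemes need not even be monodromy-free. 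You are implicitly using log smoothness to force $\Sigma(X)$ to be simple, but even granting simplicity, turning the sheaf colimit into an honest Zariski gluing is an argument you have not supplied.

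The paper bypasses this colimit analysis. It uses log smoothness of $X$ to conclude that $X\to\cA_X$ is smooth, so the image $\tilde U\subset\cA_X$ of a Zariski open $U\subset X$ is automatically an open substack. To identify $\tilde U$ with $\cA_\sigma$, it invokes the characterization from \cite[\S2.3]{AW} of the Artin fan as parametrizing connected components of fibres of $X\to\Log$: one checks over each geometric point $T\to\Log$ that the connected components of $U_T=T\times_{\Log}U$ biject with those components of $X_T$ meeting $U$, which is exactly the set $T\times_{\Log}\tilde U$. This argument uses log smoothness explicitly (for smoothness of $X\to\cA_X$); the fact that your outline never invokes it is a warning sign.
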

\begin{proof}
Since $X$ has Zariski log structure, we may select a covering $\{U \arr X\}$ by
Zariski open sets such that $U \arr \cA_{\sigma_U}$ is the Artin fan of
$U$. By the log smoothness of $X$, the morphism $X \arr \cA_{X}$ is smooth, and
hence the image $\tilde U \subset \cA_X$ of $U$ is an open substack.

It remains to show that $\tilde U$ is the Artin fan of $U$. By \cite[\S2.3
and Definition~2.3.2(2)]{AW}, this amounts to show that $\tilde U$ parameterizes
the connected components of the fibres of $U \arr \Log$. Since both $X \arr
\Log$ and $U \arr \Log$ are smooth morphisms between reduced stacks, it suffices
to consider each geometric point $T \arr \Log$. Since $U \subset X$ is Zariski
open, $U_T = T\times_{\Log}U \subset X_{T} = T\times_{\Log}X$ is also Zariski
open. Thus, for each connected component $V \subset U_T$, there is a unique
connected component $V' \subset X_T$ containing $V$ as a Zariski open dense set.
As the set of connected components of $X_T$ is parameterized by
$T\times_{\Log}\cA_X$, we observe that the set of connected components of $U_T$
is parameterized by the subscheme $T\times_{\Log}\tilde U \subset
T\times_{\Log}\cA_X$.
\end{proof}

\begin{lemma}
\label{lem:local-Zariski-factor}
Suppose $X$ is a log smooth scheme with Zariski log structure and
$\tau\in\Cones$. Then any morphism $X \arr \cA_{\tau}$ has a canonical
factorization through $\cA_{X} \arr \cA_{\tau}$.
\end{lemma}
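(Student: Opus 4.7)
The plan is to work Zariski-locally on $X$ using Lemma \ref{lem:Zariski-cover}, construct the factorization explicitly on each chart, and glue. The use of a Zariski log structure on $X$ is what makes the gluing canonical; in the \'etale setting the analogous statement can fail for reasons already flagged in the remark after Proposition \ref{Artinfanaslimit}.

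First I would apply Lemma \ref{lem:Zariski-cover} to obtain a Zariski open covering $\{U\}$ of $X$ such that each $U \to \cA_{\sigma_U}$ realizes the Artin fan of $U$, with $\cA_{\sigma_U}$ an open substack of $\cA_X$ and $P_U = \sigma_U^\vee \cap M$. Let $f\colon X \to \cA_\tau$ be the given morphism and set $P = \tau^\vee \cap M_\tau$. Restricting, $f_U\colon U \to \cA_\tau$ corresponds, via the universal property of $\cA_P$, to a monoid homomorphism $\varphi_U\colon P \to \Gamma(U,\cM_U)$, or at the characteristic level $\bar\varphi_U\colon P \to \Gamma(U,\overline\cM_U)$.

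The central point is then the identification $\Gamma(U,\overline\cM_U) = P_U$. After shrinking $U$ so that it is connected with a unique deepest stratum (whose characteristic stalk is $P_U$), every specialization in $\overline\cM_U$ is a quotient by a face of $P_U$, so a global section is determined by its value at the deepest stratum. Thus $\bar\varphi_U$ becomes a monoid morphism $P \to P_U$, and this is exactly the datum of a morphism of Artin cones $g_U\colon \cA_{\sigma_U} = \cA_{P_U} \to \cA_P = \cA_\tau$. The lift from $\overline\cM$ back to $\cM$, i.e.\ the unit data necessary to recover the morphism of log stacks rather than just of characteristic monoids, is then provided functorially by the strict chart morphism $U \to \cA_{\sigma_U}$, and one verifies that the composition $U \to \cA_{\sigma_U} \xrightarrow{g_U} \cA_\tau$ agrees with $f_U$.

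Finally, on an overlap $U \cap U'$ both restrictions $g_U$ and $g_{U'}$ are uniquely determined by the common restriction of $f$ through the same identification, so they agree on $\cA_{\sigma_U} \cap \cA_{\sigma_{U'}}$. Hence the $g_U$ glue to a canonical morphism $g\colon \cA_X \to \cA_\tau$ with $f = g \circ (X \to \cA_X)$. The main technical obstacle I expect is the bookkeeping in the previous paragraph: correctly identifying $\Gamma(U,\overline\cM_U)$ with $P_U$ under the Zariski and connectedness assumptions, and showing that the unit ambiguity in lifting from $\overline\cM$ to $\cM$ is resolved canonically by the chart. This is precisely where Zariski-ness is used, since in the \'etale case the analogous identification can fail due to monodromy on $\overline\cM$.
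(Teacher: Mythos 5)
Your proof takes essentially the same route as the paper's: pull back the Zariski cover of $\cA_X$ from Lemma~\ref{lem:Zariski-cover}, observe that $U_\sigma\to\cA_\tau$ is the same data as a monoid homomorphism $\tau^\vee\to\Gamma(U_\sigma,\ocM_{U_\sigma})=\sigma^\vee$, thereby producing $\cA_\sigma\to\cA_\tau$, and check compatibility on overlaps by restricting to common Zariski opens. One small point worth tightening: a morphism to $\cA_P$ is classified purely by a monoid homomorphism $P\to\Gamma(\ocM)$ (this is \cite[Prop.\ 5.17]{LogStack}), so the paragraph about ``unit data necessary to recover the morphism of log stacks'' is a red herring — there is no extra lifting step from $\ocM$ to $\cM$ to worry about, since a homomorphism $P\to Q$ of toric monoids already determines the morphism $\cA_Q\to\cA_P$ with no further choices.
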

\begin{proof}
By Lemma~\ref{lem:Zariski-cover}, we may select a Zariski covering $\cC := \{\cA_\sigma \subset \cA_X\}$ of $\cA_X$, hence a Zariski covering $\{U_{\sigma} := \cA_{\sigma}\times_{\cA_X}X \subset X\}$ of $X$. We may assume that if $\sigma' \subset \sigma$ is a face, then $\cA_{\sigma'} \subset \cA_{\sigma} \subset \cA_X$ is also in $\cC$. 

Locally, the morphism $U_{\sigma} \arr \cA_{\tau}$ induces a morphism $\tau^{\vee} \arr \Gamma(U_{\sigma},\ocM_{U_{\sigma}}) = \sigma^{\vee}$, hence a canonical $\phi_{\sigma}: \cA_{\sigma} \arr \cA_{\tau}$ through which $U_{\sigma} \arr \cA_{\tau}$ factors. 

To see the local construction glues, observe that the intersection
$\cA_{\sigma_1}\cap\cA_{\sigma_2}$ of two Zariski charts in $\cC$ is again
covered by elements in $\cC$. It suffices to verify that
$\phi_{\sigma_1},\phi_{\sigma_2}$ agree on $\cA_{\sigma'} \in \cC$ if
$\cA_{\sigma'} \subset \cA_{\sigma_1}\cap\cA_{\sigma_2}$. Taking global
sections, we observe that the composition $\tau^{\vee} \arr
\Gamma(U_{\sigma_i},\ocM_{U_{\sigma_i}}) \arr
\Gamma(U_{\sigma'},\ocM_{U_{\sigma'}}) = (\sigma')^{\vee}$ is independent of $i
= 1,2$ as they are determined by the restriction of $U_{\sigma_i} \arr
\cA_{\tau}$ to the common Zariski open $U_{\sigma'}$. Hence
$\phi_{\sigma_1}|_{U_{\sigma'}} = \phi_{\sigma_2}|_{U_{\sigma'}}$.
\end{proof}

\begin{proposition}
\label{lem:Zariski-factor}
Let $X \arr Y$ be a morphism of log schemes. Suppose $X$ is log smooth with Zariski log structure. Then there is a canonical morphism $\cA_X \arr \cA_Y$ such that the following diagram commutes
\[
\xymatrix{
X \ar[r] \ar[d] & Y \ar[d] \\
\cA_X \ar[r] & \cA_Y
}
\]
\end{proposition}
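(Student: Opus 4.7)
The plan is to reduce the global construction to the local factorization already established in Lemma \ref{lem:local-Zariski-factor}, and then patch by a descent argument using a chart cover of $\cA_Y$.

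First, I would invoke Proposition \ref{Artinfanaslimit} to choose a strict \'etale cover $\coprod_\alpha \cA_{\tau_\alpha} \to \cA_Y$ with each $\tau_\alpha \in \Sigma(Y)$. Writing $f \colon X \to Y \to \cA_Y$ for the canonical composition, I would pull this cover back to obtain a strict \'etale cover $\{W_\alpha := X \times_{\cA_Y} \cA_{\tau_\alpha} \to X\}$, each piece equipped with a tautological morphism $W_\alpha \to \cA_{\tau_\alpha}$. Since $X$ is log smooth with Zariski log structure and $W_\alpha \to X$ is strict, after a further Zariski refinement each $W_\alpha$ is covered by log smooth Zariski-log schemes $W$. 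Applying Lemma \ref{lem:local-Zariski-factor} to $W \to \cA_{\tau_\alpha}$ yields a canonical factorization $W \to \cA_W \to \cA_{\tau_\alpha}$, while the strict \'etale morphism $W \to X$ produces a canonical map $\cA_W \to \cA_X$ (an open immersion in the Zariski chart case by Lemma \ref{lem:Zariski-cover}). Composing gives a family of local morphisms $\cA_W \to \cA_{\tau_\alpha} \to \cA_Y$ fitting into the desired commutative square locally on $X$.

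To finish, I would verify that these local morphisms descend to a single $\cA_X \to \cA_Y$. On an overlap, say when $\cA_{W'} \subset \cA_W \cap \cA_{W''}$ in the Zariski cover of $\cA_X$ and both $W, W''$ map to $\cA_{\tau_\alpha}$, the two restrictions $\cA_{W'} \to \cA_{\tau_\alpha}$ must agree; this follows exactly by the argument in the proof of Lemma \ref{lem:local-Zariski-factor}, which identifies both restrictions via the single induced map $\tau_\alpha^{\vee} \to \Gamma(W',\ocM_{W'})$. For an overlap crossing two different \'etale charts $\cA_{\tau_\alpha}$ and $\cA_{\tau_\beta}$ of $\cA_Y$, compatibility reduces to the uniqueness of the local construction on a common refinement in the fibre product $\cA_{\tau_\alpha} \times_{\cA_Y} \cA_{\tau_\beta}$, again using the monoid description.

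The main technical obstacle I anticipate is precisely this two-step descent: one must simultaneously glue across the Zariski cover of $\cA_X$ and across the strict \'etale cover of $\cA_Y$. Each step is essentially formal once one observes that morphisms between Artin fans are pinned down by their effect on characteristic monoids at generic points of strata, so the uniqueness in Lemma \ref{lem:local-Zariski-factor} propagates through. The hypothesis that $X$ has Zariski log structure is used in an essential way because Lemma \ref{lem:local-Zariski-factor} (and hence the whole construction) rests on it; without it, the local factorizations need not be canonical and the descent would fail.
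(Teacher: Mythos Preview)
Your proposal is correct and follows essentially the same route as the paper: reduce via \'etale descent on $\cA_Y$ to the case of a single chart $\cA_\tau$, then invoke Lemma~\ref{lem:local-Zariski-factor}. The paper's proof compresses this to three lines by simply asserting that canonicity of the local construction makes the descent automatic, whereas you spell out the cover, the pullbacks $W_\alpha$, and the overlap compatibility explicitly; the content is the same.
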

\begin{proof}
By the claimed uniqueness and \'etale descent, the statement can be checked
\'etale locally on $\cA_Y$. We may then assume $\cA_Y = \cA_{\tau}$ for some
$\tau\in\Cones$, for which the statement is exactly
Lemma~\ref{lem:local-Zariski-factor}.
\end{proof}

Using Proposition~\ref{lem:Zariski-factor} we can also define a relative notion of Artin fan for maps with log smooth domains.

\begin{definition}
\label{Def: relative Artin fan}
The \emph{relative Artin fan} for a morphism $X\arr B$ of log schemes with $X$ log smooth with Zariski log structure is defined as the fibre product
\[
\cX= B\times_{\cA_B} \cA_X.
\]
\end{definition}

Assuming $B$ smooth over a trivial log point, $\cX$ has the following explicit description Zariski-locally. Let $P^\vee\to Q^\vee$ be a map of cones in $\Sigma(X)\to \Sigma(B)$. Then the open embedding $\cA_P\to \cA_X$ from Lemma~\ref{lem:Zariski-cover} induces the open embedding
\[
\big[\Spec \kk[P]/\Spec \kk[P^\gp/Q^\gp]\big]\arr \cX,
\]
and these cover $\cX$. Here $\Spec \kk[P^\gp/Q^\gp]$ acts as the subtorus of
$\Spec\kk[P^\gp]$ defined by the kernel of the map $\Spec \kk[P^\gp]\arr \Spec\kk[Q^\gp]$.
\medskip

While not, strictly speaking, needed for this paper, we end this
subsection with the instructive result that giving a log morphism to the Artin
fan $\cA_X$ of a log scheme $X$ is combinatorial in nature, captured entirely by
the induced map of cone complexes.

\begin{proposition}
\label{Prop: Combinatorial nature of Artin fan}
Let $X$ be a Zariski fs log scheme log smooth over $\Spec\kk$. Then for any fs
log scheme $T$ there is a canonical bijection
\[
\Hom_{\mathrm{fs}}(T,\cA_X)\arr \Hom_{\Cones}(\Sigma(T),\Sigma(X)),
\]
which is functorial in $T$.
\end{proposition}

\begin{proof} 
\textsc{Step I. Description of $\cA_X$.} 
By Lemma~\ref{lem:Zariski-cover}, we may select a Zariski covering $\cC :=
\{\cA_\sigma \subset \cA_X\}$, hence a Zariski covering $\{U_{\sigma} :=
\cA_{\sigma}\times_{\cA_X}X \subset X\}$. We may assume that if $\sigma' \subset
\sigma$ is a face, then $\cA_{\sigma'} \subset \cA_{\sigma} \subset \cA_X$ is
also in $\cC$. Thus $\Sigma(X)$ can be presented by the collection of cones
$\{\sigma\}$ glued along face maps $\sigma' \arr \sigma$. In particular, this
shows that $\Sigma(X)=\Sigma(\cA_X)$. Since $\Sigma$ is functorial, there is
then a map $\Hom(T,\cA_X)\arr \Hom(\Sigma(T),\Sigma(X))$. We need to construct
the inverse.

\textsc{Step II. $T$  {is atomic}.} 
Suppose $T$ has unique closed stratum $T_0$ and a global chart $P\arr \cM_T$
inducing an isomorphism $P\simeq \overline{\cM}_{T,\bar t}$ at some point $\bar
t\in T_0$ --- in the language of \cite[Def.~2.2.4]{AW} the logarithmic scheme $T$
is \emph{atomic}. Then with $\tau:=\Hom(P,\RR_{\ge 0})$, $\Sigma(T)=\tau$. 

Using the presentation of $\Sigma(X)$ described in Step I, a map
$\alpha:\Sigma(T) \arr \Sigma(X)$ has image $\alpha(\tau) \subset
\sigma_i\in \Sigma(X)$ for some $i$. Observe that
$\Hom(T,\cA_{\sigma_i})=\Hom(Q_i,\Gamma(T,\overline{\cM}_T))$ by
\cite[Prop.~5.17]{LogStack}. Now $\Gamma(T,\overline{\cM}_T)=P$, and giving a
homomorphism $Q_i\arr P$ is equivalent to giving a morphism of cones
$\tau\arr \sigma_i$. Thus $\Hom(T,\cA_{\sigma_i})=\Hom(\tau,\sigma_i)$.
In particular, $\alpha$ induces a composed map $T\arr
\cA_{\sigma_i}\subset \cA_X$, yielding the desired inverse map
$\Hom(\Sigma(T),\Sigma(X))\arr \Hom(T,\cA_X)$.

\textsc{Step III. $T$ general.}
In general $T$ has an \'etale cover $\{T_i\}$ by {atomic logarithmic schemes},
and each $T_{ij}:=T_i\times_T T_j$ also has such a {covering $\{T_{ij}^k \}$ by
atomic logarithmic schemes}. This gives a presentation $\coprod
{\Sigma(T_{ij}^k)}\rightrightarrows \coprod\Sigma(T_i)$ of $\Sigma(T)$. In
particular, a morphism of cone complexes $\Sigma(T)\arr \Sigma(X)$ induces
morphisms $\Sigma(T_i)\arr\Sigma(X)$ compatible with the maps
${\Sigma(T_{ij}^k)} \arr \Sigma(T_i),\Sigma(T_j)$. Thus we obtain unique
morphisms $T_i\arr \cA_X$ compatible with the morphisms ${T_{ij}^k}\arr T_i,
T_j$, inducing a morphism $T\arr \cA_X$.
\end{proof}

\begin{example}
Let $X=\AA^1$ with the toric log structure. Then $\cA_X=\cA_\NN =
[\AA^1/\GG_m]$. Given an ordinary scheme $\ul T$, a morphism $\ul f:\ul T\arr
\cA_X$ is equivalent to giving a strict log morphism $T\arr \cA_X$, by endowing
$\ul T$ with the pull-back $f^*\cM_{\AA_\NN}$ of the log structure on $\cA_\NN$.
From this point of view, the universal $\GG_m$-torsor $\cP$ on $\cA_\NN$ agrees
with the $\GG_m$-torsor subsheaf of $\cM_{\cA_\NN}$ defined by the generating
section of $\ol\cM_{\cA_\NN}$. Thus the pull-back log structure $\ul
f^*\cM_{\AA_\NN}$ is given by a line bundle $\cL$ on $\ul T$, the line bundle
with associated torsor $\cL^\times= \ul f^* \cP$, and a homomorphism
$\cL\to\cO_{\ul T}$ of $\cO_{\ul T}$-modules defining the structure morphism, or
its restriction to $\cL^\times$. Conversely, the morphism $\ul f$ from $\ul T$
to the quotient stack $\cA_\NN= [\AA^1/\GG_m]$ can be recovered from
$\cL\to\cO_{\ul T}$ by the associated $\GG_m$-equivariant morphism from the
$\GG_m$-torsor $\Spec_{\ul T}\big(\bigoplus_{d\in\ZZ}\cL^{\otimes -d}\big)$ to
$\AA^1_{\ul T}$.

Thus for an arbitrary log structure $\cM_T$ on $\ul T$, a log morphism $f:(\ul
T,\cM_T)\arr \cA_\NN$ is the same data as the restriction of $\cM_T\to\cO_T$ to a
$\GG_m$-torsor subsheaf $\cL^\times\subset\cM_T$. Indeed, such an isomorphism yields the identification of $\ul f^*\cP$ with a $\GG_m$-torsor subsheaf $\cL^\times\subset \cM_T$, and the property of being a log morphism forces $\ul f$ to be associated to the restriction of the structure morphism $\cM_T\arr \cO_{\ul T}$ to $\cL^\times$.

Now Proposition~\ref{Prop: Combinatorial nature of Artin fan} assumes a log
structure $\cM_T$ on $\ul T$ is already given and then says that the set of log
morphisms $T=(\ul T,\cM_T)\arr \cA_X$ equals the set of morphisms
$\Sigma(T)\to\RR_{\ge 0}=\Sigma(X)$ of cone complexes. Indeed, such a morphism
of cone complexes is equivalent to specifying $\ol m\in \Gamma(T,\ol\cM_T)$, and
then the $\GG_m$-torsor subsheaf $\cL^\times\subset \cM_T$ is simply defined by
the preimage of $\ol m$ under $\cM_T\to\ol\cM_T$.
\end{example}

%---------------------------------------------------------------------------------
\subsection{Stable logarithmic maps and their moduli}
\label{Sect: stable logarithmic maps}

This section reviews the theory of stable logarithmic maps developed in
\cite{GS,Chen,AC}, emphasizing the tropical language from
\cite{GS}. Most references in the following are therefore to \cite{GS}, but of
course all results have analogues in \cite{Chen,AC} under the slightly
stronger assumption of global generatedness of $\ol\cM_X$. Note that the
restriction on global generatedness has been removed in \cite{ACMW14} by base
changing to a refinement of the Artin fan $\cA_X$ of $X$.

%---------------------------------------------------------------------------------
\subsubsection{Definition}
We fix a log morphism $X\arr B$ with the logarithmic structure on $X$
being defined in the Zariski topology. Recall from \cite[Def.~1.6]{GS}:

\begin{definition}
\label{Def: stable log map}
A \emph{stable logarithmic map} $(C/S,\bp, f)$ is a commutative diagram
\begin{equation}
\label{stablelogmapdiagram}
\xymatrix@C=30pt
{C\ar[r]^f\ar[d]_{\pi}&X\ar[d]\\
S\ar[r]&B
}
\end{equation}
where
\begin{asparaenum}
\item[(i)]
$\pi:C\arr S$ is a proper, logarithmically smooth and integral
morphism of log schemes together with a tuple of sections
${\mathbf p}=(p_1,\ldots,p_{k})$ of $\ul{\pi}$ such that every geometric
fibre of $\pi$ is a reduced and connected curve, and if $U\subset \ul{C}$
is the non-critical locus of $\ul{\pi}$ then $\overline{\cM}_C|_U
\simeq \ul{\pi}^*\overline{\cM}_S\oplus \bigoplus_{i=1}^{k}p_{i*}\NN_S$.
\item[(ii)]
For every geometric point $\bar s \arr \ul{S}$, the restriction of
$\ul{f}$ to $\ul{C}_{\bar s}$ together with $\mathbf p$ is an ordinary stable map.
\end{asparaenum}
\end{definition}

%---------------------------------------------------------------------------------
\subsubsection{Basic maps}
The crucial concept for defining moduli of stable logarithmic maps is the notion
of {\em basic} stable logarithmic maps. To explain this in tropical terms, we
begin by summarizing the discussion of \cite[\S1]{GS} where more details are
available. The terminology used in \cite{Chen, AC} is {\em minimal} stable
logarithmic maps.

%---------------------------------------------------------------------------------
\subsubsection{Induced maps of monoids}
Suppose given $(C/S, \bp, f)$ a stable logarithmic map with
$S=\Spec(Q'\arr\kk)$, with $Q'$ an arbitrary sharp fs monoid and $\kk$ an
algebraically closed field. {We will use the convention that a point denoted
$p\in C$ is always a marked point, and a point denoted $q\in C$ is always a
nodal point.} Denoting $\underline{Q}' = \pi^{-1} Q'$, the morphism $\pi^\flat$
of logarithmic structures induces a homomorphism of sheaves of monoids $\psi=
\ol\pi^\flat:\underline{Q}'\arr \overline{\cM}_{C}$. Similarly $f^\flat$ induces
$\varphi= \ol f^\flat:f^{-1}\overline{\cM}_X\arr \overline{\cM}_{C}$.

%---------------------------------------------------------------------------------
\subsubsection{Structure of $\psi$}
The homomorphism $\psi$ is an isomorphism when restricted to the complement of
the special (nodal or marked) points of $C$. The sheaf $\overline{\cM}_C$ has
stalks $Q'\oplus\NN$ and $Q'\oplus_{\NN}\NN^2$ at marked points and nodal points,
respectively. The latter fibred sum is determined by a map 
\begin{equation}
\label{Eq:rho_q}
\NN \longrightarrow Q',\quad 1\longmapsto \rho_q
\end{equation} 
and the diagonal map $\NN \arr\NN^2$, see \cite[Def.~1.5]{GS}.
The map $\psi$ at these special points is given by the inclusion $Q'\arr
Q'\oplus\NN$ and $Q'\arr Q'\oplus_{\NN}\NN^2$ into the first component
for marked and nodal points, respectively.

%---------------------------------------------------------------------------------
\subsubsection{Structure of $\varphi$}
\label{Sec:phi}
For $\bar x\in C$ a geometric point with underlying scheme-theoretic point $x$,
the map $\varphi$ induces maps $\varphi_{\bar x}:P_{x}\arr
\overline{\cM}_{C,\bar x}$
for 
\[
P_x:=\overline{\cM}_{X,\ul f(\bar x)}.
\]
Note that $\overline{\cM}_{X,\ul f(\bar x)}$ is independent
of the choice of $\bar x\arr x$ since the logarithmic structure on $X$ is
Zariski. Following Discussion 1.8 of \cite{GS}, we have the following
behaviour at three types of points on $C$:
\begin{enumeratei}
\item
$x=\eta$ is a generic point, giving a local homomorphism\footnote{A 
homomorphism of monoids $\varphi:P\arr Q$ is \emph{local} if 
$\varphi^{-1}(Q^{\times})=P^{\times}$.}
of monoids
\begin{equation}
\label{Eqn: varphi_eta}
\varphi_{\bar\eta}:P_{\eta}\longrightarrow Q'.
\end{equation}
\item
$x=p$ is a marked point, giving the composition
\begin{equation}
\label{Eqn: Contact order at p}
u_p:P_p\stackrel{\varphi_{\bar p}}{\larr} Q'\oplus\NN\stackrel{\pr_2}{\larr} \NN.
\end{equation}
The element $u_p\in P_p^{\vee}$ is called the \emph{contact order} at 
$p$.
\item
$x=q$ is a node contained in the closures of $\eta_1$, $\eta_2$. 
If $\chi_i:P_q\arr P_{\eta_i}$ are the generization maps
there exists a homomorphism 
\[
u_q:P_q\arr \ZZ,
\]
called \emph{contact order at $q$},
such that 
\begin{equation}
\label{veta1veta2diffeq}
\varphi_{\bar\eta_2}\big(\chi_2(m)\big)-\varphi_{\bar \eta_1}\big(\chi_1(m)
\big)=u_q(m)\cdot \rho_q,
\end{equation}
with $\rho_q\neq 0$ given in Equation (\ref{Eq:rho_q}), see \cite[(1.8)]{GS}.
The maps $\varphi_{\bar\eta}\circ\chi_i$ and $u_q$ are equivalent to providing
the local homomorphism $\varphi_{\bar q}:P_q \arr Q'\oplus_{\NN} \NN^2$.
\end{enumeratei}

The choice of ordering $\eta_1,\eta_2$ for the branches of $C$ containing a node
is called an \emph{orientation} of the node. We note that reversing the
orientation of a node $q$ (by interchanging $\eta_1$ and $\eta_2$) results in
reversing the sign of $u_q$.

%---------------------------------------------------------------------------------
\subsubsection{Dual graphs and combinatorial type}
\label{Par: dual graphs}
In this paper, a graph $G$ consists of a set of vertices
$V(G)$, a set of edges $E(G)$ and a separate set of {\em legs} or {\em
half-edges} $L(G)$, with appropriate incidence relations between vertices and
edges, and between vertices and half-edges. We admit multiple edges, loops and
legs. In order to obtain the correct notion of automorphisms, we also implicitly
use the convention that every edge $E \in E(G)$ of $G$ is a pair of {\em
orientations of $E$} or a pair of {\em half-edges of $E$} (disjoint from
$L(G)$), so that the automorphism group of a graph with a single loop is
$\ZZ/2\ZZ$.

Given a stable logarithmic map $(C/S,\bp,f)$ over a logarithmic point, let $G_C$
be the dual intersection graph of $C$. This is the graph which has a vertex
$v_{\eta}$ for each generic point $\eta$ of $C$, an edge $E_q$ joining
$v_{\eta_1}, v_{\eta_2}$ for each node $q$ contained in the closures of
both $\eta_1$ and $\eta_2$, and where $E_q$ is a loop if $q$ is a
double point in an irreducible component of $C$. Note that an ordering of the
two branches of $\ul C$ at a node gives rise to an orientation on the
corresponding edge. Finally, {$G_C$} has a leg $L_p$ with endpoint $v_{\eta}$
for each marked point $p$ contained in the closure of $\eta$. Occasionally we
view $V(G), E(G)$ and $L(G)$ as subsets of $C$ and then write $x\in C$ for a
vertex, edge or leg of $G$ coresponding to a generic point, node or marked point
of C respectively.

\begin{definition}
\label{Def: type of stable log map}
Let $(C/S,\bp,f)$ be a stable logarithmic map over a logarithmic
point $S=\Spec(Q\arr \kk)$. The \emph{combinatorial type} of $(C/S,\bp,f)$
consists of the following data:
\begin{enumerate}
\item
The dual intersection graph $G=G_C$ of $C$.
\item
The genus function\footnote{This was not part of the combinatorial type as defined
in \cite{GS}, but is included here to agree with the type of a tropical map
below, where it is indispensible.} $\bg: V(G)\arr\NN$ associating to $v\in V(G)$
the genus of the irreducible component $C(v)\subset C$.
\item
The map $\bsigma: V(G)\cup E(G)\cup L(G)\arr \Sigma(X)$ mapping $x\in C$ to
$\big(\ol\cM_{X,f(x)}\big)_\RR^\vee\in\Sigma(X)$.
\item
The contact data $\bu=\{u_p,u_q\}$ at marked points $p$ and nodes $q$ of $C$.
\end{enumerate}
\end{definition}

%---------------------------------------------------------------------------------
\subsubsection{The basic monoid}
Given a combinatorial type of a stable logarithmic map
$(C/S,\bp,f)$, we define a monoid $Q$ by first defining its dual
\begin{equation}
\label{Eqn: Basic monoid}
Q^{\vee}=\left\{ \big((V_{\eta})_{\eta}, (e_q)_q\big)
\in \bigoplus_{\eta} P_{\eta}^{\vee}\oplus \bigoplus_q\NN
\,\bigg|\, \forall q: V_{\eta_2}-V_{\eta_1}=e_qu_q\right\}.
\end{equation}
Here the sum is over generic points $\eta$ of $C$ and nodes $q$ of $C$. Readers
with background in tropical gemetry should recognize this monoid as the moduli
cone of tropical curves of fixed combinatorial type, as will be discussed in
\S\ref{tropicalsubsection}. We then set
\[
Q:=\Hom(Q^{\vee},\NN).
\]
It is shown in \cite[\S1.5]{GS}, that $Q$ is a sharp monoid, fine and saturated
by construction as the dual of a finitely generated submonoid of a free
abelian group. Note also that $Q$ indeed only depends on the combinatorial type
of $(C/S,\bp,f)$.

Given a stable logarithmic map $(C'/S',\bp',f')$ over $S'=\Spec(Q'\arr\kk)$ of
the same combinatorial type, we obtain a canonically defined map
\begin{equation}
\label{candefmap}
Q\arr Q'
\end{equation}
which is most easily defined as the transpose of the map
\[
(Q')^{\vee} \arr Q^{\vee}\subset \bigoplus_{\eta} P_{\eta}^{\vee}
\oplus\bigoplus_q \NN,\quad
m\longmapsto \big((\varphi_{\bar\eta}^t(m))_{\eta}, (m(\rho_q))_q\big),
\]
with $\varphi_{\bar\eta}$ and $\rho_q$ defined in \eqref{Eqn: varphi_eta} and \eqref{Eq:rho_q}, respectively.

\begin{definition}[Basic maps]
Let $(C/S, \bp, f)$ be a stable logarithmic map. We say $f$ is \emph{basic}
if at every geometric point $\bar s$ of $S$, the map $Q\arr Q'=\ol\cM_{S,\bar s}$
from \eqref{candefmap} defined by the restriction $(C_{\bar s}/\bar s, \bp_{\bar s},
f|_{C_{\bar s}})$ is an isomorphism.
\end{definition}

%---------------------------------------------------------------------------------
\subsubsection{Degree data and class}
In what follows, $H_2^+(X)$ denotes a semigroup carrying \emph{degree data} for
curves in $X$, which are locally constant in flat families, such as effective
$1$-cycles on $X$ modulo algebraic or numerical equivalence or, working over
$\CC$, classes in singular homology $H_2(X,\ZZ)$ pairing non-negatively with a
K\"ahler form. We require that the moduli spaces of ordinary stable
maps of fixed curve class, genus and number of marked points are of finite
type.

\begin{definition} 
A \emph{class $\beta$} of stable logarithmic maps to $X$ consists of the following:
\begin{enumeratei}
\item
The data $\ul{\beta}$ of an underlying ordinary stable map, i.e., the genus $g$,
a curve class $A\in H_2^+(X)$, and the number of marked points $k$.
\item
Integral elements $u_{p_1},\ldots,u_{p_k}\in |\Sigma(X)|$.
\footnote{We remark that this definition of contact orders is different than
that given in \cite[Def.~3.1]{GS}. Indeed, the definition given there
does not work when $X$ is not monodromy free, and \cite[Rem.~3.2]{GS}
is not correct in that case. However, \cite[Def.~3.1]{GS} may be used
in the monodromy free case.}
\end{enumeratei}
We say a stable logarithmic map $(C/S,\bp,f)$ \emph{is of class $\beta$} if two
conditions are satisfied. First, the underlying ordinary stable map must be of
type $\ul\beta=(g,A,k)$. Second, define the closed subset $\ul{Z}_i\subset
\ul{X}$ to be the union of strata with generic points $\eta$ such that $u_{p_i}$
lies in the image of $\sigma_{\eta}\arr |\Sigma(X)|$. Then for any $i$ we have
$\im(\ul{f}\circ p_i)\subset\ul{Z}_i$ and for any geometric point $\bar
s\arr\ul{S}$ such that $p_i(\bar s)$ lies in the stratum of $X$ with generic
point $\eta$, there exists $u\in\sigma_\eta=\Hom(\ol \cM_{X,\bar\eta},\NN)$
mapping to $u_{p_i}\in |\Sigma(X)|$ making the following diagram
commute:
\[
\hspace{6ex}
\xymatrix{
\ol\cM_{X,\ul f(p_i(\bar s))}\ar[r]^(.35){\overline{f}^{\flat}}\ar[d]_\chi&
\overline{\cM}_{C,p_i(\bar s)}=\overline{\cM}_{S,\bar s}\oplus\NN\ar[d]^{\pr_2}
\phantom{\hspace{12ex}}\\
\ol\cM_{X,\bar\eta}\ar[r]^u&\NN.
}
\]
Here $\chi$ is the generization map. In particular, $s_i$ specifies the contact
order $u_{p_i}$ at the marked point $p_i(\bar s)$ as defined in
\eqref{Eqn: Contact order at p}. \end{definition} We emphasize that the class
$\beta$ does not specify the contact orders $u_q$ at nodes.

\begin{definition}
Let $\scrM(X/B,\beta)$ denote the stack of basic stable logarithmic maps of
class $\beta$. This is the category whose objects are basic stable logarithmic
maps $(C/S,\bp,f)$ of class $\beta$, and whose morphisms $(C/S,\bp,f)\arr
(C'/S',\bp',f')$ are commutative diagrams
\[
\xymatrix@C=30pt
{C\ar[r]^g\ar[d]&C'\ar[r]^{f'}\ar[d]&X\ar[d]\\
S\ar[r]^h&S'\ar[r]&B
}
\]
with the left-hand square cartesian, $S\arr S'$ strict, and
$f=f'\circ g$, $g\circ{\mathbf p}= {\mathbf p}'\circ h$.
\end{definition}

\begin{theorem}
\label{Thm: Stack of stable log maps}
If $X\arr B$ is proper, then $\scrM(X/B,\beta)$ is a proper
Deligne-Mumford stack. If furthermore $X\arr B$ is logarithmically smooth,
then $\scrM(X/B,\beta)$ carries a perfect obstruction theory, defining
a virtual fundamental class $[\scrM(X/B,\beta)]^{\virt}$ in the 
rational Chow group of $\scrM(X/B,\beta)$.
\end{theorem}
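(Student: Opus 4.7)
The plan is to assemble this statement from the foundational results of \cite{GS}, \cite{Chen}, and \cite{AC}, which each established a version of this theorem (in somewhat different generality and language) and were shown to be consistent in \cite{AC}. Concretely, I would first settle the properness and Deligne--Mumford properties, and then turn to the construction of the perfect obstruction theory.

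For properness and the DM property, I would invoke the forgetful morphism
\[
\scrM(X/B,\beta)\longrightarrow\scrM(\uX/\uB,\ul\beta)
\]
which was already stated in the introduction to be proper and representable by \cite[Theorem 1.1.1]{ACMW14}, and in fact finite by \cite{Wise-finite}. Since $\uX\to\uB$ is proper, the target is a proper Deligne--Mumford stack by the classical Kontsevich theory, and both properties transfer to the source along a finite representable morphism. Boundedness follows from boundedness of the underlying ordinary stable maps. Alternatively, one can run the valuative criterion directly as in \cite{GS}, Theorem 0.2: the delicate point is to show that a stable logarithmic map over the generic fibre of a trait extends (after ramified base change) to a \emph{basic} stable logarithmic map over the closed point, which is done by taking the unique extension of the underlying stable map and then showing there is a unique basic logarithmic enhancement.

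For the perfect obstruction theory, I would work relative to the stack $\fM_B$ of logarithmically smooth curves over $B$ (or more precisely a variant keeping track of the marking data in $\ul\beta$). The universal curve $\pi\colon \cC\to\scrM(X/B,\beta)$ carries a universal stable logarithmic map $f\colon \cC\to X$, and since $X\to B$ is log smooth, $\Omega^1_{X/B}^{\log}$ is locally free on $X$. Following \cite{GS}, \S5 (compare \cite{Chen}, \S4), I would take as relative obstruction theory
\[
\bE^\bullet \;:=\; \bigl(R\pi_* f^*\Omega^1_{X/B}^{\log}\bigr)^\vee
\;\longrightarrow\; \bL_{\scrM(X/B,\beta)/\fM_B}.
\]
Perfectness of amplitude $[-1,0]$ follows from log smoothness of $\pi$ of relative dimension one together with local freeness of $f^*\Omega^1_{X/B}^{\log}$. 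The virtual fundamental class is then produced via Behrend--Fantechi, using that $\fM_B\to\Log_B$ is smooth so that the relative obstruction theory can be pushed to an absolute one over $\Log_B$, yielding a class in the rational Chow group of $\ul\scrM(X/B,\beta)$.

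The main technical obstacle is verifying that $\bE^\bullet$ really is a perfect obstruction theory, i.e.\ that the map to $\bL_{\scrM(X/B,\beta)/\fM_B}$ satisfies the standard $h^0$--iso, $h^{-1}$--surjective condition. This requires the logarithmic infinitesimal lifting theory of stable logarithmic maps, and crucially the fact that the basic (minimal) monoid is compatible with deformations in the sense that obstructions to extending a basic logarithmic map are captured by $H^1(C,f^*T_{X/B}^{\log})$ with no extra contribution from the ghost sheaf on the base. This is exactly the content of the deformation-theoretic analysis carried out in \cite{GS}, \S5 and \cite{Chen}, \S4, and I would quote it directly rather than re-derive it.
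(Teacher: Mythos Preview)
Your proposal is correct and matches the paper's approach: both treat this as a summary statement assembled from the foundational references \cite{GS}, \cite{Chen}, \cite{AC}, with properness in full generality coming from \cite{ACMW14} and the perfect obstruction theory from \cite{GS}, \S5. You give more detail on the mechanism (properness via the finite forgetful morphism to $\scrM(\uX/\uB,\ul\beta)$, the explicit form of $\bE^\bullet$ relative to $\fM_B$), whereas the paper's proof is essentially a paragraph of citations, but the substance is the same.
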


\begin{proof}
Under the given assumption that $X$ is a Zariski log scheme,
\cite[Thm.~2.4]{GS} proves that $\scrM(X/B,\beta)$ is a Deligne-Mumford
stack. Properness was shown in logc.cit.\ under a technical assumption, and in
general in \cite{ACMW14}.

The existence of a perfect obstruction theory when $X\arr B$
is logarithmically smooth was proved in \cite[\S5]{GS}.
\end{proof}

%---------------------------------------------------------------------------------
\subsection{Stacks of pre-stable logarithmic curves}
\label{pre-stablelogcurves}
For the obstruction theory in Theorem~\ref{Thm: Stack of stable log
maps} one works over the Artin stack $\fM_B$ of pre-stable logarithmically
smooth curves defined over $B$. Since this stack will be important later on, let
us briefly recall its construction. First, working over a field $\kk$, there is
a stack ${\mathbf M}$ of pre-stable basic logarithmic curves over $\Spec\kk$,
essentially constructed by F.\ Kato in \cite{FKato}. Endowing $\mathbf M$ with
its basic log structure, the fibre product ${\mathbf M}\times_{\Spec\kk} B$ in
the category of log stacks is a fine log stack. We can then define $\fM_B$ using
Olsson's stack over ${\mathbf M}\times B$:
\[
\fM_B:=\Log_{{\mathbf M}\times B}.
\]
Indeed, an object in this stack is a log scheme $T$ with two morphisms $T\to
{\mathbf M}$ and $T\arr B$. The corresponding pre-stable log smooth curve over
$T$ is the logarithmic pull-back to $T$ of the universal pre-stable curve over
$\mathbf M$.

We also consider the following refinements of $\bM$ introduced in
\cite[Def.~2.6]{Behrend-Manin} and further discussed in \cite[p.603]{Behrend-GW}.
Let $G$ be a graph decorated by a map
\[
\bg: V(G)\arr \NN,
\]
associating to each vertex its \emph{genus}. Then there is an algebraic stack
\begin{equation}
\label{Eqn: M(G,bg)}
\text{$\bM(G,\bg)\quad$ of \emph{$\quad(G,\bg)$-marked pre-stable curves}}
\end{equation}
with objects over a
$B$-scheme $S$ given by
\begin{enumerate}
\item
for each $v\in V(G)$, a family of pre-stable curves $C_v\to S$ of genus $\bg(v)$, together with marked sections $x_L:S\to C_v$ defined by the legs $L\in L(G)$ with $v\in L$,
\item
for each edge $E\in E(G)$ with vertices $v,w$, a pair of marked sections $y_v,
y_w$ of $C_v\to S$, $C_w\to S$, respectively.
\end{enumerate}
All marked sections are required to be mutually disjoint and to have image in
the non-critical locus of $\coprod_v C_v\arr S$. Taking the fibred sum of
$\coprod_v C_v$ along the pairs of marked sections associated to the edges, we
may as well view the objects of $\bM(G,\bg)$ as families of marked nodal curves
\begin{equation}
\label{Eqn: universal curve}
(C\arr S,x);
\end{equation}
from this point of view, each edge $E$ defines a family of nodal
points $y_E:S\to C$ and each vertex a closed embedding $C_v\to C$ of a family of
pre-stable curves of genus $g(v)$ and with image a union of irreducible
components. Thus we have a morphism of algebraic stacks
\begin{equation}
\label{Eqn: M(G,bg)->M}
\mathbf{M}(G,\bg)\arr \mathbf{M},
\end{equation}
turning $\mathbf{M}(G,\bg)$ into a logarithmic algebraic stack by pulling back
the log structure from $\mathbf{M}$. Note that on the level of the underlying
stacks, \eqref{Eqn: M(G,bg)->M} induces the identification of the stack quotient
$[\bM(G,\bg)/ \Aut(G,\bg)]$ with the normalization of a closed substack of
$\bM$, defining the well-known stratified structure of $\bM$. See
\cite[XII,\S10]{ACG} for a detailed discussion. Now define
\begin{equation}
\label{Eqn: fM(G,bg)}
\fM_B(G,\bg):=\Log_{\bM(G,\bg)\times B},\quad
\fC_B(G,\bg):=\fM_B(G,\bg)\times_{\bM(G,\bg)} \bC(G,\bg).
\end{equation}

An important feature of the collection of stacks $\bM(G,\bg)$ and in turn of
$\fM_B(G,\bg)$ is their functorial behaviour under \emph{contraction morphisms}
of decorated graphs
\begin{equation}
\label{Eqn: contraction morphism}
\phi:(G,\bg)\arr (G',\bg'),
\end{equation}
that is, an isomorphism of $G'$ with the graph $G/E_\phi$ contracting a subset
of edges $E_\phi\subset E(G)$ such that\footnote{The right-hand side is
identified in Equation~\eqref{Eq:genus} below as the genus of $\phi^{-1}(v')$.}
\[
\bg'(v')= b_1\big(\phi^{-1}(v')\big)+ \sum_{v\in V(\phi)^{-1}(v')} \bg(v)
\]
holds for all $v'\in V(G')$
\cite[Def.~1.3]{Behrend-Manin}. Here $V(\phi):V(G)\arr V(G')$ is the surjection
on the set of vertices defined by $\phi$, and we have in addition a compatible
inclusion $E(\phi): E(G')\mapright{\simeq} E(G)\!\setminus\! E_\phi\subset
E(G)$ of the sets of edges and a bijection $L(\phi):L(G')\to L(G)$ on the sets of
legs. This notion of morphism captures the behavior of the combinatorial type of
pre-stable curves under generization and is indeed compatible with the finite
maps \eqref{Eqn: M(G,bg)->M} to $\bM$:

\begin{proposition}
\label{Prop: Functoriality of M(G,bg)}
For any contraction morphism $(G,\bg)\arr (G',\bg')$ of genus-decorated graphs,
there are finite unramified morphisms of ordinary stacks $\bM(G,\bg)\arr
\bM(G',\bg')$ and
\[
\fM_B(G,\bg) \arr \fM_B(G',\bg').
\]
\end{proposition}

\begin{proof}
By base change and the definition of the log structures it is enough to prove
the statement for the morphism of stacks underlying $\bM(G,\bg)\arr
\bM(G',\bg')$. In this case the statement follows by iterated application of the
clutching morphisms of \cite[Cor.~3.9]{Knudsen}.
\end{proof}

We emphasize that Proposition~\ref{Prop: Functoriality of M(G,bg)} is purely on the level of stacks with no log structures involved. Incorporating log structures in the picture is more subtle and is part of the gluing formalism developed in \cite{punctured}.

%---------------------------------------------------------------------------------
\subsection{The tropical interpretation}
\label{tropicalsubsection}

The basic monoid $Q$ was originally derived from its tropical interpretation,
which will play an important role here. We review this in our general
setting. Given a stable logarithmic map $(C/S,\bp,f)$, 
we obtain an associated diagram of cone complexes,
\begin{equation}
\label{conecomplexdiagram}
\xymatrix@C=30pt
{
\Sigma(C)\ar[r]^{\Sigma(f)}\ar[d]_{\Sigma(\pi)} & \Sigma(X)\ar[d] \\ 
\Sigma(S) \ar[r] & \Sigma(B).
}
\end{equation}
This diagram can be viewed as giving a family of tropical curves
mapping to $\Sigma(X)$, parameterized by the cone complex $\Sigma(S)$.
Indeed, a fibre of $\Sigma(\pi)$ is a graph and the restriction of
$\Sigma(f)$ to such a fibre can be viewed as a tropical curve mapping
to $\Sigma(X)$. We make this precise.

To avoid difficulties in notation, we shall assume that $X$ is simple
(Definition~\ref{Def:simple}). This is not a restrictive assumption in
this paper since we assume $X$ to be log smooth over the
trivial log point $\Spec\kk$, and as $X$ is assumed to be Zariski in any event, it
follows that $X$ is simple (Proposition~\ref{Prop: simplicity criterion}). We use the reduced
presentation of $\Sigma(X)$ from \S\ref{ss:cone-complex}. Then simplicity implies that if
$\tau,\sigma\in\Sigma(X)$ and the image of $\tau$ in $|\Sigma(X)|$ is a face of
the image of $\sigma$, then there is a unique face map $\tau\arr\sigma$ in the
diagram.

The left-hand vertical arrow of \eqref{conecomplexdiagram} is a family
of abstract tropical curves according to the following definition, cf.\ also
\cite[Def.~3.2]{CaChUW}.

\begin{definition}
\label{tropicalcurvedef}
A \emph{(family of)} tropical curves $(G,\bg,\ell)$ over a cone
$\omega\in\Cones$ is a connected graph $G$ together with a bijection
$L(G)\arr\{1,\ldots,k\}$ (\emph{leg ordering}) and two maps
\[
\bg: V(G)\arr \NN,\quad
\ell: E(G)\arr \Hom(\omega\cap N_\omega,\NN)\setminus\{0\}.
\]
For $v\in V(G)$ and $E\in E(G)$ we call $g(v)$ the \emph{genus} of $v$ and $\ell(E)$ the \emph{length function} of $E$.
\end{definition}

The genus of a family of tropical curves $(G,\bg,\ell)$ is defined by
\begin{equation}
\label{Eq:genus}
|\bg|= b_1(G) + \sum_{v\in V(G)} \bg(v).
\end{equation}
Note that given a tropical curve $(G,\bg,\ell)$ over a cone $\omega$ and
$s\in\omega$ not contained in any proper face, then $s\circ\ell$ assigns a
strictly positive real number to each edge. Together with the convention that
legs are infinite length, $(G,s\circ\ell)$ therefore specifies a metric graph,
reproducing the traditional definition of an abstract tropical curve. Hence our
definition makes precise the notion of a family of abstract tropical curves
parameterized by $\omega\in \Cones$.

\begin{construction}
\label{Constr: Cone complex for tropical curve}
We suppress the genus decoration in the notation $(G,\bg,\ell)$ and conflate
$(G,\bg,\ell)$ with its associated morphism of cone complexes
\begin{equation}
\label{Eqn: Gamma for tropical curve}
\Gamma=\Gamma(G,\ell)\stackrel{\pi_\Gamma}{\larr} \omega,
\end{equation}
constructed as follows. For each $v\in V(G)$ take one copy $\omega_v$ of
$\omega$, while for each $E\in E(G)$ take the cone
\begin{equation}
\label{Eqn: cone for edge}
\omega_E=\big\{ (s,\lambda)\in\omega\times\RR_{\ge0}\,\big|\, \lambda\le \ell(E)(s)\big\}.
\end{equation}
The cone $\omega_E$ has two facets, each isomorphic to $\omega$ via projection to the first factor. The corresponding inclusions
\[
s\longmapsto (s,0),\qquad
s\longmapsto (s,\ell(s)).
\]
define face morphisms $\omega_v,\omega_{v'}\arr \omega_E$ for the two vertices $v,v'$
adjacent to $E$. Note this definition is independent of the chosen labellings
$v,v'$ and works also for graphs with loops. Finally, for each $L\in L(G)$ with
adjacent vertex $v$ take $\omega_L=\omega\times\RR_{\ge0}$ with face morphism
$\omega_v\to \omega_L$ defined by the facet $\omega\times\{0\}\subset
\omega_L$. Then $\Gamma$ is the generalized cone complex defined by this
directed sytem in $\Cones$. The morphism to $\omega$ is defined on each
$\omega_E$ by the projection to the first factor.

By construction, each vertex $v\in V(G)$ defines a section of $\pi_\Gamma: \Gamma\to \omega$ denoted as follows:
\begin{equation}
\label{Eqn: v(s)}
\omega\arr \Gamma,\quad s\longmapsto v(s)\in \omega_v.
\end{equation}
Then for $s\in\omega$ not contained in a proper face, the fibre
${\pi_\Gamma}^{-1}(s)$ is the metric graph $(G,s\circ\ell)$ previously defined.
\end{construction}

It is also not hard to replace individual cones as base spaces for families of tropical curves by cone complexes. See \cite[\S3]{CaChUW} for an elaboration of such ideas.

\begin{definition}
\label{tropmaptosigmaxdef}
A \emph{(family of)} tropical maps (from a tropical curve) to $\Sigma(X)$
over a cone $\omega\in\Cones$ is a tropical curve $(G,\bg,\ell)$ over
$\omega$ (Definition~\ref{tropicalcurvedef}) with associated cone complex
$\Gamma=\Gamma(G,\ell)$ (Construction~\ref{Constr: Cone complex for tropical
curve}), together with a morphism of cone complexes
\[
h:\Gamma\larr \Sigma(X).
\]
\end{definition}

\begin{remark}
\label{Rem: Discrete data tropical map}
There are a number of discrete data that we can extract from a tropical map $h:\Gamma\arr\Sigma(X)$ over $\omega\in\Cones$ which are of importance in the sequel.
\begin{enumerate1}
\item
\emph{Image cones:} For a vertex, edge, or leg $x$ of $G$, let
$\omega_x\in\Gamma$ be the cone associated to $x$. Define
\begin{equation}
\label{Eqn: strata function}
\bsigma: V(G) \cup E(G) \cup L(G) \arr \Sigma(X)
\end{equation}
by mapping $x$ to the minimal cone $\tau\in\Sigma(X)$ containing $h(\omega_x)$.
Note that if $E$ is a leg or edge incident to a vertex $v$, then there is an
inclusion of faces $\bsigma(v)\subset\bsigma(E)$ in the (reduced) presentation
of $\Sigma(X)$.
\item
\emph{Contact orders at edges:} Let $E_q\in E(G)$ be an edge with a chosen order
of vertices $v,v'$ (orientation). Then by the definition of the cone
$\omega_{E_q}$ of $\Gamma$ associated to $E_q$ in \eqref{Eqn: cone for
edge}, the image of $(0,1)\in N_{\omega_{E_q}}= N_\omega\times\RR$ under
$h$ defines $u_q\in N_{\bsigma(E_q)}$ such that in $N_{\bsigma(E_q)}$,
\begin{equation}
\label{Eqn: u_q for tropical curve}
h(v(s))-h(v'(s))=\ell(E_q)(s)\cdot u_q
\end{equation}
holds for any $s\in\omega_{E_q}$. Here $v(s)\in\Gamma$ is the section
of $\Gamma\to\omega$ defined in \eqref{Eqn: v(s)}. Reversing the orientation of
$E_q$ results in replacing $u_q$ by $-u_q$.
\item
\emph{Contact orders at marked points:} Similarly, for a leg $L_p\in L(G)$, the image of $(0,1)\in N_{\omega_{L_p}}= N_\omega\times\RR$ defines $u_p \in N_{\bsigma(L_p)}\cap
\bsigma(L_p)$ with $h(\Int(\omega_{L_p}))\subset
\Int(\bsigma(L_p))$.
\end{enumerate1}
\end{remark}

\begin{definition}
\label{Def: type of families of tropical maps}
\begin{enumerate1}
\item
The \emph{type} of a family of tropical maps $h:\Gamma\arr \Sigma(X)$ over
$Q^\vee_\RR\in\Cones$ is the quadruple $\tau=(G,\bg,\bsigma,\bu)$ consisting of the
associated genus decorated graph $(G,\bg)$, the map $\bsigma$ from \eqref{Eqn:
strata function} recording the strata and the contact orders $\bu=\{ u_p,u_q\}$
as defined in Remark~\ref{Rem: Discrete data tropical map}. Note that we are
suppressing the leg numbering, viewing the set $L(G)$ as identical with
$\{1,\ldots ,k\}$.
\item
For a type $\tau$ of a family of tropical maps, $\Aut(\tau)$ denotes the subset of automorphisms of $G$ commuting with the maps $\bg,\bsigma,\bu$.
\item
Given a type $\tau$ of a family of tropical maps, the associated \emph{basic monoid}
$Q(\tau)$ is the dual of the monoid $Q^\vee$ defined in \eqref{Eqn: Basic
monoid}, depending only on $G,\bsigma$ and $\bu$.
\item
If in addition we have given a map
\[
\bA: V(G)\arr H_2^+(X),
\]
we call $\ttau=(\tau,\bA)$ the \emph{decorated type} of a family of
tropical maps, the pair $(h,\bA)$ a \emph{decorated family of tropical
maps} and
\[
|\bA| = \sum_{v\in V(G)} \bA(v)
\]
the \emph{total curve class} of $\bA$.
\end{enumerate1}
\label{Def: type of tropical map}
\end{definition}

Generalizing \eqref{Eqn: contraction morphism} we have a notion of contraction morphism for (decorated) types of families of tropical maps needed below.

\begin{definition}
\label{Def: Contraction for tropical types}
Let $\tau=(G,\bg,\bsigma,\bu)$ and $\tau'=(G',\bg',\bsigma',\bu')$ be types of
families of tropical maps. A \emph{contraction morphism}
$\tau\to \tau'$ is a contraction morphism $\phi:(G,\bg)\to (G',\bg')$ of
decorated graphs \eqref{Eqn: contraction morphism} with the following additional properties:
\begin{enumeratei}
\item
For all $x\in V(G)\cup E(G)\cup L(G)$ the cone $\bsigma'(\phi(x))\in \Sigma(X)$ is a face of $\bsigma(x)$.
\item
For all $x\in E(G')\cup L(G')$ it holds $\bu'(x)=\bu\big(E(\phi)(x)\big)$.
\end{enumeratei}
Similarly, a contraction morphism $\ttau=(\tau,\bA)\arr \ttau'=(\tau',\bA')$ of
decorated types of families of tropical maps is a contraction morphism
$\tau\arr\tau'$ such that $\bA'(v')=\sum_{v\in V(\phi)^{-1}(v')} \bA(v)$ holds
for all $v'\in V(G')$.
\end{definition}

%---------------------------------------------------------------------------------
\subsubsection{Families of tropical curves from logarithmically smooth curves.}
Now suppose 
\[
S=\Spec(Q\arr\kk)
\]
for some monoid $Q$ and $(C/S,\mathbf{p})$ is a family of marked log smooth curves, as in Definition~\ref{Def: stable log map},(i).

\begin{proposition}
\label{Prop: tropicalization of domain curves}
The tropicalization
\begin{equation}
\label{Eqn: tropicalization of domain curve}
\Sigma(\pi):\Sigma(C)\larr \Sigma(S)=Q^{\vee}_{\RR}
\end{equation}
of $(C/S,\mathbf{p})$ naturally has the structure of a family of tropical curves
$(G,\bg,\ell)$ over $Q^\vee_\RR$.
\end{proposition}

\begin{proof}
Take for $G$ the dual intersection graph of $C$.
If $\eta$ is a generic point of $C$, then $\omega_{\eta}=Q^{\vee}_{\RR}$ and
$\Sigma(\pi)|_{\omega_{\eta}}$ is the identity. Thus each fibre of
$\Sigma(\pi)|_{\omega_{\eta}}$ is a point $v$. We take the weight $\bg(v) =
g(C(v))$, the geometric genus of the component $C(v)$ with generic point $\eta$.
The cone of $\Sigma(C)$ defined by a node $q$ of $C$ is
\[
\omega_q=\Hom(Q\oplus_{\NN} \NN^2,\RR_{\ge 0})=Q_{\RR}^{\vee} 
\times_{\RR_{\ge 0}} \RR^2_{\ge 0},
\]
where the maps $Q^{\vee}_{\RR}\arr \RR_{\ge 0}$ and $\RR^2_{\ge 0} \arr \RR_{\ge
0}$ are given by evaluation at $\rho_q\in Q\setminus\{0\}$ and by $(a,b)\mapsto
a+b$, respectively. The projection $\RR^2_{\ge0}\arr \RR_{\ge0}$ to, say, the
first factor, defines an isomorphism
\[
Q_{\RR}^{\vee}  \times_{\RR_{\ge 0}} \RR^2_{\ge 0}
\larr \big\{ (m,\lambda)\in Q_\RR^\vee\times\RR_{\ge0}\, \big|\,
\lambda\le m(\rho_q)\big\}.
\]
Thus defining $\ell(E_q)= \rho_q$, we have a canonical isomorphism $\omega_q\simeq
\omega_{E_q}$ with $\omega_{E_q}$ defined in \eqref{Eqn: cone for edge}.
For a marked point $p_i\in C$, we have $\omega_{p_i}=Q^{\vee}_{\RR} \times
\RR_{\ge 0}$, and $\Sigma(\pi)|_{\omega_{p_i}}$ is the projection onto the first
component, again compatible with the definition of $\Gamma=\Gamma(G,\ell)$ in
Construction~\ref{Constr: Cone complex for tropical curve}.

\end{proof}

%---------------------------------------------------------------------------------
\subsubsection{Families of tropical maps to $\Sigma(X)$ from stable logarithmic maps.}
\label{Sec:maps-to-tropical-maps}
We continue working over a logarithmic point $S=\Spec(Q\arr\kk)$ and assume in
addition given an fs log scheme $X$, which is simple in the sense of
Definition~\ref{Def:simple}.

\begin{proposition}
\label{Prop: log maps define tropical maps}
The tropicalization of a stable logarithmic map $(C/S,\mathbf{p},f)$ over the
logarithmic point $S=\Spec(Q\arr\kk)$ defines a family of tropical maps to
$\Sigma(X)$ over $Q_\RR^\vee$.
\end{proposition}

\begin{proof}
In view of Proposition~\ref{Prop: tropicalization of domain curves} the statement follows readily from the definitions.
\end{proof}

\begin{remark}
An element $x\in V(G)\cup E(G)\cup L(G)$ corresponds to a point $x\in C$
--- either a generic point, a double point, or a marked point. The cone
$\bsigma(x)$ introduced in Remark~\ref{Rem: Discrete data tropical map} is
\[
\bsigma(x)=(P_x)^{\vee}_{\RR}=\Hom(\overline{\cM}_{X,\ul f(\bar x)},\RR_{\ge 0})
\in \Sigma(X),
\]
for any geometric point $\bar x$ mapping to $x$. With this identification of
cones understood, it is a matter of unravelling the definitions that the other
discrete data introduced in Remark~\ref{Rem: Discrete data tropical map}, the
contact orders $u_{L_p},u_{E_q}$, agree with $u_p,u_q$ defined in \S\ref{Sec:phi}.
Note in particular how \eqref{Eqn: u_q for tropical curve} appears as the
tropical manifestation of \eqref{veta1veta2diffeq}. Thus the type of the
tropicalization of a stable logarithmic map, as a family of tropical maps
(Definition~\ref{Def: type of families of tropical maps}), agrees with its
combinatorial type from Definition~\ref{Def: type of stable log map}.
\end{remark}

%---------------------------------------------------------------------------------
\subsubsection{Traditional tropical maps --- the relative situation}
\label{Subsect: Traditional tropical geometry}
A situation of particular interest arises when working over the standard log
point $b_0=\Spec\big(\NN\to \kk[\NN]\big))$. Then all generalized cone complexes
come with a morphism $\pi$ to $\Sigma(b_0)=\RR_{\ge0}$. Taking the fiber of
$\pi$ over $1\in\RR_{\ge0}$ then produces a generalized polyhedral complex as
introduced in \S\ref{ss:poly-complex}. Conversely, let $\pi:\Sigma\to \RR_{\ge 0}$
be a map of generalized cone complexes such that no maximal cone of $\Sigma$
maps to $0\in\RR_{\ge 0}$. Then $\Sigma$ and $\pi$ can be recovered from the
generalized polyhedral complex $\pi^{-1}(1)$ by replacing each polyhedron
$\sigma=(\sigma_\RR,N)$ by the closure of $\RR_{\ge0}(\sigma\times\{1\})$ in
$N_\RR\times\RR$.

If $X$ is a finite type logarithmic stack over the standard log point $b_0$ with associated tropicalization $\pi:\Sigma(X)\arr \Sigma(B_0)=\RR_{\ge0}$, we now write
\[
\Delta(X)= \pi^{-1}(1) \subset \Sigma(X)
\]
for the associated polyhedral complex.

In particular, this discussion applies to the logarithmic scheme $X_0$ and
logarithmically smooth morphism $X_0\arr b_0$ from the main theorem in this
paper, Theorem~\ref{Thm: Decomposition}. Let $(C/S,\bp,f)$ be a stable log map
to $X_0$ with $S=\Spec(Q\to \kk)$ a log point as in
\ref{Sec:maps-to-tropical-maps}, \emph{but now coming with a map to $b_0$}. Let
$\pi_S:Q^\vee\to\NN$ be the tropicalization of $S\arr b_0$. Then the family of
tropical maps $\Sigma(X)\arr \Sigma(X_0)$ over $Q^\vee$ carries the same
information as its restriction to the fiber over $1\in\RR_{\ge 0}$, a family of
maps from metric graphs to $\Delta(X)$ parameterized by the polyhedron
$\pi_S^{-1}(1)\subset Q^\vee$.

The transition from cone complexes to polyhedral complexes provides the
link to more traditional tropical language. In the remainder of this paper we
use cone complexes for most of the general results and polyhedral complexes for
explicit computations. With regards to using both cones and polyhedra as
parameter spaces for families of tropical maps, note that there is no conflict
of language: A family of tropical maps to $\Sigma(X_0)$ over a cone $\sigma$ can
be viewed as a family of maps of metric graphs to $\Sigma(X_0)$
interpreted as a polyhedral complex, now parameterized by $\sigma$ as a
polyhedron.\footnote{It is worthwhile pointing out that the transition
from polyhedral complexes to cone complexes can be subtle \cite{recession
cones}. This is not an issue here since we always have an underlying description
in terms of cone complexes.}

As a matter of notation, we indicate the transition from cone complexes to
polyhedral complexes by overlining. Thus a family of tropical maps
$h:\Gamma\arr\Sigma(X_0)$ over a cone $\sigma$ with a map $\pi_S:
\sigma\arr\RR_{\ge0}$ induces the family of tropical maps
\begin{equation}
\label{Eqn: ol functor}
\ol h:\ol \Gamma\arr \ol\Sigma(X)=\Delta(X)
\end{equation}
over the polyhedron $\ol\sigma=\pi_S^{-1}(1)$.

%---------------------------------------------------------------------------------
\subsubsection{Basic maps and tropical universal families}
\label{Sec:basic-gives-universal-tropical}
Basicness of a stable logarithmic map $(C/S,\bp,f)$ over a logarithmic
point can then be recast as follows.

\begin{proposition}
\label{Prop: basicness versus universally tropical}
Let $(C/S,\bp,f)$ be a stable logarithmic map over a logarithmic point
$S=\Spec(Q\arr \kk)$ and $\tau$ its combinatorial type (Definition~\ref{Def:
type of stable log map}). Then $(C/S,\bp,f)$ is basic if and only if the
family of tropical maps in Proposition~\ref{Prop: log maps define tropical maps} is
universal among families of tropical maps to $\Sigma(X)$ of type $\tau$.
\end{proposition}

\begin{proof}
The definition of the dual of the basic monoid $Q^{\vee}$ precisely encodes the
data of a family of tropical maps to $\Sigma(X)$ over $\sigma=\RR_{\ge 0}$
of type $\tau$ (Definition~\ref{Def: type of tropical map}). Indeed, let
$G_C$ be the dual intersection graph of $C$ from \S\ref{Par: dual graphs}, with
vertices $v_\eta$, edges $E_q$ and legs $L_p$. Then a tuple $((V_{\eta})_{\eta},
(e_q)_q)\in \Int(Q^{\vee}_{\RR})$ specifies a family of
tropical maps
\[
h:\Gamma(G,\ell) \arr \Sigma(X)
\]
over $\RR_{\ge 0}$ of the given type, by defining $\ell(E_q)=e_q$ and
$h|_{\omega_v}$ by mapping $1\in \RR_{\ge0}=\omega_{v_\eta}$ to $V_{\eta}\in
\Sigma(X)$. The type also determines $h$ on each leg $L_p$. It is shown in
\cite[Prop.~1.9]{GS} that if one such tropical map to $\Sigma(X)$ of a certain
type exists then there exists one over $Q_\RR^\vee$; moreover, any other
tropical map of the same type, say over $\sigma\in\Cones$, is obtained from this
one by pull-back via a homomorphism $\sigma\to Q^\vee$.
\end{proof}

\begin{remark}
\label{Rk:added-cones}
Note that if $S$ is not a log point, the diagram \eqref{conecomplexdiagram}
still exists, but the fibres of $\Sigma(\pi)$ may not be the expected ones.
In particular, if $\bar s$ is a geometric point of $S$, there is a functorial
diagram
\[
\xymatrix@C=30pt
{\Sigma(C_{\bar s})\ar[r]\ar[d]&\Sigma(C)\ar[d]\\
\Sigma(\bar s)\ar[r] &\Sigma(S)
}
\]
but this diagram need not be Cartesian due to monodromy in the family $S$. For
example, it is easy to imagine a situation where $C_{\bar s}$ has two
irreducible components and two nodes for every geometric point $\bar s$, but the
nodal locus of $C\arr S$ is irreducible, as there is monodromy interchanging the
two nodes. Then a fibre of $\Sigma(C)\arr \Sigma(S)$ may consist of two vertices
joined by a single edge, while a fibre of $\Sigma(C_{\bar s})\arr \Sigma(\bar
s)$ will have two vertices joined by two edges. Similarly, there may be
monodromy interchanging irreducible components, hence a fibre of
$\Sigma(C)\arr\Sigma(S)$ may have fewer vertices than $C_{\bar w}$ has
irreducible components. This issue can be resolved by redefining moduli
of tropical curves as stacks, following \cite{CaChUW}.
\end{remark}

%---------------------------------------------------------------------------------
\subsubsection{Decorated tropical maps from stable logarithmic maps}
\label{Sec: decorations tropical map}

In the situation of Proposition~\ref{Prop: log maps define tropical maps}, the
tropical map $h:\Sigma(C)\to\Sigma(X)$ comes with the natural decoration
\begin{equation}
\label{Eqn: decorated type log map}
\bA: V(G)\arr  H_2^+(X),\quad
v\longmapsto \big[f(C(v))\big].
\end{equation}
Here $C(v)\subset C$ is the irreducible component corresponding to the vertex
$v$ and $[f(C(v))]$ is the class of $f(C(v))$ in $H_2^+(X)$. The decoration by
curve classes is compatible with the contraction morphisms of decorated graphs
(Definition~\ref{Def: Contraction for tropical types}) defined by generization:

\begin{lemma}
\label{Lem: generization and decorations}
Let $(C/S,\bp,f)$ be a stable logarithmic map to $X$ over some logarithmic
scheme $S$ and $(\tau_{\bar s},\bA_{\bar s})$ with $\tau_{\bar s}= (G_{\bar
s},\bg_{\bar s},\bsigma_{\bar s},\bu_{\bar s})$ its decorated type at the
geometric point $\bar s\to S$ according to Definition~\ref{Def: type of tropical
map} and \eqref{Eqn: decorated type log map}. Then if $\bar s,\bar s'\to S$ are
two geometric points with $\bar s$ a generization of $\bar s'$, the induced map
\[
(\tau_{\bar s'},\bA_{\bar s'})\arr (\tau_{\bar s},\bA_{\bar s})
\]
is a contraction morphism (Definition~\ref{Def: Contraction for tropical types}).
\end{lemma}

\begin{proof}
\cite[Lem.~1.11]{GS} says that $\tau_{\bar s'}\to \tau_{\bar s}$ is a contraction morphism. To check the statement on the curve classes, recall that the preimage of $v\in G_{\bar s}$ in $G_{\bar s}'$ consists of those vertices $v'\in G_{\bar s'}$ with $C_{\bar s'}(v')$ contained in the closure of $C_{\bar s}(v)$. Since this closure defines a flat family of curves, invariance of classes in $H_2^+(X)$ in flat families then implies
\[
\big[f(C_{\bar s}(v))\big] = \sum_{v'\mapsto v} \big[f(C_{\bar s'}(v'))\big],
\]
as claimed.
\end{proof}

%---------------------------------------------------------------------------------
\subsection{Stacks of stable logarithmic maps marked by tropical types}
\label{Subsect: tropically decorated moduli spaces}

We now put ourselves in the situation of the main result in this paper,
Theorem~\ref{Thm: Decomposition}, and assume $X_0\arr b_0$ is logarithmically
smooth and $X_0$ is simple. In particular, curve classes are understood to take values in $H_2^+(X_0)$.

Similar to $\bM(G,\bg)$, we can now define stacks of stable logarithmic maps to
$X_0$ over $b_0$ with restricted decorated types of tropicalizations.

\begin{definition}
\label{Def: ttau-marked stable map}
Let $\ttau=(G,\bg,\bsigma,\bu,\bA)=(\tau,\bA)$ be the decorated type of a
tropical map as defined in Definition~\ref{Def: type of tropical map}. A
\emph{marking by $\ttau$} of a stable logarithmic map $(C/S,\bp,f)$ to $X_0$
over a logarithmic base scheme $S$ over $b_0$ is the following data:
\begin{enumerate}
\item
An isomorphism of $\ul C/\ul S$ with a $(G,\bg)$-marked pre-stable curve
\eqref{Eqn: universal curve}.
\item
The restriction of $\ul f$ to the closed subscheme $Z\subset \ul C$ (a
subcurve or nodal or punctured section of $C$) defined by $x\in V(G)\cup
E(G)\cup L(G)$ factors through $X_{\bsigma(x)}\subset X_0$.
\item
For each geometric point $\bar s\arr S$ with decorated type $\ttau_{\bar
s}=(G_{\bar s},\bg_{\bar s},\bsigma_{\bar s},\bu_{\bar s},\bA_{\bar s})$ of
$(C/S,\bp,f)$, the morphism $(G_{\bar s},\bg_{\bar s})\arr (G,\bg)$ of decorated
graphs from (1) defines a morphism
\[
\ttau_{\bar s}=(\tau_{\bar s},\bA_{\bar s})\arr \ttau=(\tau,\bA)
\]
of decorated types of tropical maps. In particular, there is an associated localization map
\[
\chi_{\tau\tau_{\bar s}}: Q_{\tau_{\bar s}}\arr  Q_\tau
\]
of the corresponding basic monoids.
\item
In the situation of (3),
the preimage $\cK_{\ttau,\bar s}\subset \cM_{S,\bar s}$ of $Q_\tau\setminus\{0\}$ under the composition
\[
\cM_{S,\bar s}\arr \overline \cM_{S,\bar s}= Q_{\tau_{\bar s}}\stackrel{\chi_{\tau\tau_{\bar s}}}{\arr} Q_\tau
\]
maps to $0$ under the structure morphism $\cM_{S,\bar s} \arr \cO_{S,\bar s}$.
\end{enumerate}
\end{definition}

\begin{remark}
Definition~\ref{Def: ttau-marked stable map} calls for some explanations. The
isomorphism in (1) just identifies a contraction of the dual intersection graph
of each geometric fiber of $\ul C\to \ul S$ with a fixed genus-decorated graph
$(G,\bg)$, in a way compatible with generization. Then (2) and (3) ask that the
decorated graphs associated to geometric fibers of the stable log map
$(C/S,\bp,f)$ are refinements of the decorated type $(\tau,\bA)$. Condition~(4)
is maybe the least obvious. It effectively takes the reduction of the moduli
space in unobstructed situations, or on a virtual level later on. We could, in
fact, omit Condition~(4) at the expense of taking reductions in some formulas
below, e.g.\ in $\fM(\cX_0,\tau)$ in Corollary~\ref{Cor: decomposition of
fM(cX_0,beta')}. 
\end{remark}

Given a decorated type $\ttau=(\tau,\bA)$ of tropical maps, we define
\begin{equation}
\label{Eqn: stack of decorated log maps}
\scrM(X_0,\ttau)
\end{equation}
as the stack with objects over a scheme $S$ basic
stable logarithmic maps $(C/S,\bp,f)$ over $b_0$ marked by the decorated type
$\ttau$. We emphasize $\scrM(X_0,\ttau)$ is a moduli space of stable maps \emph{over $b_0$}, but we suppress $/b_0$ in the notation for simplicity. Similarly, we henceforth write $\scrM(X_0,\beta)$ instead of $\scrM(X_0/b_0,\beta)$.
\smallskip

For later use let us also show here that the monoid ideals in Definition~\ref{Def: ttau-marked stable map},(4) define a coherent sheaf of ideals \cite[Prop.~II.2.6.1]{Ogus} in $\cM_{\scrM(X_0,\ttau)}$.

\begin{lemma}
\label{Lem: cK}
For each decorated type $\ttau$ of tropical maps, there exists a unique coherent
sheaf of ideals $\cK_\ttau\subset \cM_{\scrM(X_0,\ttau)}$ with stalks
$\cK_{\ttau,\ol s}$ as defined in Definition~\ref{Def: ttau-marked stable
map},(4).
\end{lemma}

\begin{proof}
The statement follows by \cite[Prop.~II.2.6.1,(2)]{Ogus} since $\cK_{\bar s}$ is defined by a monoid ideal in a chart.
\end{proof}

Let $\beta=(g,A,u_{p_1},\ldots,u_{p_k})$ with $g=|\bg|$, $A=|\bA|$, $k=|L(G)|$.

\begin{proposition}
\begin{enumerate}
\item
The stack $\scrM(X_0,\ttau)$ is a proper Deligne-Mumford stack.
\item
The morphism $\scrM(X_0,\ttau)\arr \scrM(X_0,\beta)$ is finite and unramified.
\end{enumerate}
\label{Prop: scrM(X_0,ttau)}
\end{proposition}

\begin{proof}
By (1) in Definition~\ref{Def: ttau-marked stable map}, we have a morphism
of stacks
\[
\scrM(X_0,\ttau)\arr \scrM(X_0,\beta)\times_\bM \bM(G,\bg) = \scrM(X_0,\beta)\times_\fM \fM(G,\bg).
\]
Condition~(2) in Definition~\ref{Def: ttau-marked stable map} defines a closed
substack of the fibre product on the right-hand side. Prescribing the contact
orders $u_p, u_q$ at $p\in L(G)$, $q\in E(G)$ and the curve classes for the
subcurves of $C$ defined by each $v\in V(G)$ imposes locally constant
conditions, hence select a union of connected components of this closed
substack. Thus $\scrM(X_0,\ttau)$ is isomorphic to a closed substack of the
algebraic stack $\scrM(X_0,\beta)\times_\bM \bM(G,\bg)$, proving (1). The second
statement follows since $\fM(G,\bg)\arr \fM$ is finite and unramified
(Proposition~\ref{Prop: Functoriality of M(G,bg)}).
\end{proof}

%=================================================================================
\section{From toric decomposition to virtual decomposition}
\label{sec:torictovirtual}

Throughout this section, denote by $b_0=
(\Spec\kk,\kk^\times\oplus\NN)$ the standard log point over $\kk$. We also fix a
logarithmically smooth and projective morphism $X_0\to b_0$ of log
schemes.

%---------------------------------------------------------------------------------
\subsection{Decomposition in the log smooth case}
\label{Sec:decomposition-toroidal}

The decomposition formula is based on the following simple fact in toric
geometry. Let $\pi: W\arr \AA^1$ be a morphism of toric varieties with
$\Sigma_\pi:\Sigma_W\to\Sigma_{\AA^1}$ the corresponding morphism of fans,
defined by a homomorphism $N\arr N_{\AA^1}$ of co-character lattices.
We identify $\Sigma_W$ with the cone complex $\Sigma(W)$ associated to $W$ with
its toric log structure, by forgetting the embedding of $|\Sigma_W|$ into
$N_\RR$, and similarly for $\Sigma_{\AA^1}$. For a ray $\gamma\in \Sigma_W$
denote by $D_\gamma\subset W$ the corresponding toric divisor and by
$m_\gamma\in\NN$ the generator of the image of
\[
\ZZ\simeq N_\gamma\stackrel{\Sigma(\pi)}{\larr} N_{\AA^1}\simeq\ZZ.
\]

\begin{proposition}
\label{prop:obvious}
We have the following equality of Weil divisors on $W$:
\[
\pi^*(\{0\}) = \sum_{\gamma} m_\gamma D_\gamma.
\]
\end{proposition}

\begin{proof}
The map $\Sigma(\pi):N\arr\ZZ$ defines a monomial function $z^m$, $m\in
\Hom(N,\ZZ)$ on $W$. It is standard that the order of vanishing of $z^m$ on the
divisor $D_{\gamma}$ is the value of $m$ on the generator of $\gamma\cap N_{\gamma}$.
But this value is precisely $m_{\gamma}$, giving the result.
\end{proof}

Proposition~\ref{prop:obvious} can equivalently be stated as a decomposition of
the fundamental class of $W_0=\pi^{-1}(0)$. Our decomposition theorem is based
on the generalization of this statement to a log smooth morphism $W_0 \arr b_0$
of logarithmic algebraic stacks locally of finite type. Note first that in this
situation, $W_0$ is locally pure-dimensional by log smoothness over $b_0$. Thus
it makes sense to define the fundamental cycle $[W_0]$ as locally finite formal
linear combination of locally top-dimensional integral substacks.

Next, to define the multiplicities $m_\gamma$, consider the morphism of
generalized cone complexes $\Sigma(W_0) \to \Sigma(b_0)$ associated to $W_0 \arr
b_0$ as defined after Proposition~\ref{Prop: simplicity criterion}. We have
$\Sigma(b_0) \simeq \RR_{\geq 0}$ with the lattice $N_{b_0} \simeq \ZZ$. Working
in charts, there is still a correspondence between rays $\gamma \in \Sigma(W_0)$
and integral substacks $W_\gamma \subset W_0$, now locally of
top dimension. Note that if $\sigma\in\Sigma(W_0)$ and
$\gamma\to\sigma$ is a morphism in $\Sigma(X)$, then the pull-back of
$W_\gamma$ to a chart for $W_0$ at a geometric point of the stratum
$W_0(\sigma)$ is contained in the union of all toric divisors for rays
$\gamma'\subset \sigma$ with $\gamma\simeq \gamma'$ in $\Sigma(W_0)$. Hence
$W_\gamma$ may not be locally irreducible if $\Sigma(W_0)$ has cones with
self-identifications. But since we work with cone complexes with reduced
presentations, such rays $\gamma,\gamma'\subset \sigma$ define the same
one-dimensional cone in $\Sigma(W_0)$. These rays may be identified by
self-maps of $\sigma$ or simply correspond to several maps $\sigma^\vee\to
\gamma^\vee$ defined by generization in $\ocM_{W_0}$.

For a ray $\gamma$ with integral lattice $N_{\gamma}$, we have $\gamma\cap
N_{\gamma}\simeq \NN$, and the homomorphism $\ZZ\simeq N_\gamma \arr
N_{b_0}\simeq\ZZ$ is multiplication by an integer $m_\gamma$.

For the following statement recall also the notion of idealized log
structures and idealized log smoothness from \cite[III.1.3 and IV.3]{Ogus}. In a
nutshell, this notion is designed to treat strata of logarithmic spaces, by
adding sheaves of ideals $\cK\subset \cM_X$ defining these strata as part of the
data.

\begin{corollary}
\label{Cor:decomposition-toroidal}
Let $\pi:W_0\arr b_0$ be a log smooth morphism locally of finite type from a
logarithmic algebraic stack to the standard log point $b_0$. Denote
by $[W_0]$ the fundamental cycle of $W_0$, well-defined since $W_0$ is locally
pure-dimensional. Then the following formula holds
\[
[W_0] = \sum_{\gamma} m_\gamma [W_\gamma]
\]
in the group of locally top-dimensional algebraic cycles on $W_0$ \cite{Kresch}.
The sum runs over the one-dimensional cones in the generalized cone complex
$\Sigma(W_0)$ of $W_0$.

Moreover, $W_\gamma$ is idealized log smooth over $b_0$ for some sheaf of ideals
$\cK_\gamma\subset \cM_{W_\gamma}$.
\end{corollary}
\begin{proof}
The claimed equality of cycles can be checked on a cover by smooth charts. We may thus assume that $W_0$ is covered by a neat chart, that is, that we have a commutative
diagram
\[
\xymatrix{
V\ar[d]_{\pi_V} & U \ar[d]\ar[r]^{h}\ar[l]& W_0 \ar[d]^\pi\\
\AA^1 & \Spec\kk\ar@{=}[r]\ar[l]& b_0 
}
\]
where
\begin{inparaenum}
\item
$h$ is an \'etale surjection, 
\item
$\Spec\kk\arr \AA^1$ is the inclusion of the origin and $g:U\to
\Spec\kk\times_{\AA^1} V= \pi_V^{-1}(0)$ is smooth, 
\item
$V$ is the affine toric variety $\Spec\kk[\sigma^\vee\cap N^*]$ defined by
$(\sigma_\RR,N)\in \Sigma(W_0)$ and $\pi_V:V\to\AA^1$ is a toric morphism.
\end{inparaenum}
Thus we have
\[
h^*[W_0]=[U]=g^*([V_0])
\]
via flat pull-back, where $V_0=\pi^{-1}(0)$. Now Proposition~\ref{prop:obvious}
describes $[V_0]$ in terms of the toric divisors $D_{\gamma'}\subset V$ defined by
the rays $\gamma'\subset \sigma$. Thus
\begin{equation}
\label{Eqn: [W_0] in a chart}
h^*[W_0]= \sum_{\gamma'\subset\sigma} m_{\gamma'} g^*(D_{\gamma'}),
\end{equation}
with $m_{\gamma'}$ the generator of the image of $\ZZ\simeq N_{\gamma'}\to
N_{\AA^1}=\ZZ$. Each such $\gamma'$ defines a one-dimensional cone
$\gamma\in\Sigma(W_0)$ with $m_\gamma=m_{\gamma'}$. Moreover, for two different rays
$\gamma',\gamma''\subset\sigma$, the geometric generic points of $D_{\gamma'}$,
$D_{\gamma''}$ map to the same geometric generic point of $W_0$ if and only if
there exists a one-dimensional cone $\gamma\in\Sigma(W_0)$ and morphisms
$\gamma\to\gamma'$ and $\gamma\to\gamma''$. Since $\Sigma(x)$ is the colimit of
such $\sigma$ appearing in neat charts of $W_0$, the equality~\eqref{Eqn: [W_0] in a
chart} in a chart verifies the claimed equation of cycles.

The claim on idealized log smoothness of $W_\gamma$ follows from the local
description as a union of toric strata and the criteria in
\cite[IV.3.1.21 and IV.3.1.22]{Ogus}.
\end{proof}

%---------------------------------------------------------------------------------
\subsection{Logarithmic maps to the relative Artin fan \texorpdfstring{$\cX_0$}{X0}}
\label{Subsect: stable maps to cX}

To lift the decomposition result Corollary~\ref{Cor:decomposition-toroidal} to
the moduli space $\scrM(X_0,\beta) =\scrM(X_0/b_0,\beta)$ of stable
logarithmic maps in Theorem~\ref{Thm: Decomposition}, we factor the map
$\scrM(X_0,\beta) \to \fM_{b_0}$ forgetting the logarithmic map to $X_0$ via an
intermediate log stack that is log \'etale over $\fM_{b_0}=\fM_B\times_B b_0$.
This intermediate log stack is the stack $\fM(\cX_0,\beta')$ of basic
logarithmic maps to the relative Artin fan $\cX_0 = b_0\times_B \cX$ of $X_0$
over $b_0$ (Definition~\ref{Def: relative Artin fan}). Since curve classes do
not make sense on $\cX_0$, we have no stability in $\fM(\cX_0,\beta')$ and
\[
\beta'=(g,u_{p_1},\ldots,u_{p_k})
\]
only keeps the genus and the contact orders at the marked points from
$\beta=(g,A,u_{p_1},\ldots,u_{p_k})$. The point is that $\fM(\cX_0,\beta')$ is
pure-dimensional, has unobstructed deformations and captures the tropical
geometry of the situation, while the decomposition according to
Corollary~\ref{Cor:decomposition-toroidal} has a simple tropical interpretation
on this stack.

\begin{proposition}
\begin{enumerate}
\item
The stack of basic logarithmic maps $\fM(\cX_0,\beta')$ to $\cX_0$ over $b_0$ is algebraic.
\item
The morphism $\fM(\cX_0,\beta)\arr \fM_{b_0}$ forgetting the logarithmic map to
$\cX_0$ is strict and \'etale. 
\end{enumerate}
\label{fM(cX_0)}
\end{proposition}

\begin{proof}
Let $\fC_{b_0}$ denote the universal curve over $\fM_{b_0}$.
By openness of basicness, $\fM(\cX_0,\beta')$ is an open substack of
$\Hom_{\fM_{b_0}}(\fC_{b_0}, \fM_{b_0}\times_{b_0} \cX_0)$. This Hom-stack is algebraic by \cite[Cor.~1.1.1]{Wise-minimality}, proving (1).

For (2), the morphism  $\fM(\cX_0,\beta')\arr \fM_{b_0}$ is strict by definition.
Since $\cA_X\to \cA_B$ is logarithmically \'etale, it follows that $\cX_0$ is logarithmically \'etale over $b_0$. Now \cite[Prop.~3.2]{AW} implies that $\fM(\cX_0,\beta')\to \fM_{b_0}$ is logarithmically \'etale.
\end{proof}

Note that Proposition~\ref{fM(cX_0)},(2) also shows that
$\fM(\cX_0,\beta')$ is log smooth over $b_0$, because $\fM_{b_0}$ is, and that
the obstruction theory of $\scrM(X_0,\beta)$ over $\fM_{b_0}$ induces an
obstruction theory for $\scrM(X_0,\beta)$ over $\fM(\cX_0,\beta')$.

\begin{remark}
\label{Rem: tropical moduli spaces}
Implicit in the discussion in Proposition~\ref{Prop: basicness versus
universally tropical} applied with $X=\Spec\kk$ and in
Remark~\ref{Rk:added-cones} is the fact that log-smoothness of $\bM$ can be used
to relate the moduli space of abstract tropical curves to
the tropicalization of $\bM$, properly interpreted as a stacky cone complex
\cite{CaChUW} -- see precise statement in \cite[Theorem 3.14]{Ulirsch-teichmuller}. In view of Proposition~\ref{fM(cX_0)},(2) we can now similarly
relate the moduli space of tropical maps to $\Sigma(X_0)=\Sigma(\cX_0)$ of class
$\beta'$ to the stacky cone complex associated to $\fM(\cX_0,\beta')$. While we
do not develop the details of this picture here, it should be clear that this
interpretation is at the basis of many arguments in this paper.
\end{remark}
\medskip

We also need the $\tau$-marked refinements $\fM(\cX_0,\tau)$ of
$\fM(\cX_0,\beta')$, similar to $\scrM(X_0,\ttau)$ for $\scrM(X_0,\beta)$.
Omitting the curve class, $\tau$ is now a type of tropical map to $\Sigma(X_0)$ of
total genus $g$ and with $k$ legs (Definition~\ref{Def: type of tropical map},(1)). Then
\[
\fM(\cX_0,\tau)
\]
is defined as in Definition~\ref{Def: ttau-marked stable map} with $\cX_0$
replacing $X_0$ and disregarding the curve classes in Condition~(3).
Analogous to $\cK_\ttau$ for $\scrM(X_0,\ttau)$ constructed in Lemma~\ref{Lem:
cK}, we have a sheaf of ideals
\begin{equation}
\label{Eqn: cK_tau}
\cK_\tau\subset \cM_{\fM(\cX_0,\tau)}.
\end{equation}

We first observe the following analogue of Proposition~\ref{Prop: scrM(X_0,ttau)}.

\begin{proposition}
\begin{enumerate}
\item
The stack $\fM(\cX_0,\tau)$ is algebraic.
\item
The morphism $\iota_\tau:\fM(\cX_0,\tau)\arr \fM(\cX_0,\beta')$ forgetting the
marking by $\tau$ is finite and unramified.
\end{enumerate}
\label{Prop: fM(cX_0,tau) -> fM(cX_0,beta)}
\end{proposition}

\begin{proof}
The proof is identical to the proof of Proposition~\ref{Prop: scrM(X_0,ttau)}.
\end{proof}
\medskip

We are now in position to apply Corollary~\ref{Cor:decomposition-toroidal} to
$\fM(\cX_0,\beta')\to b_0$. The key is the description of the components
$W_\gamma$ in this corollary in terms of rigid tropical maps.

\begin{definition}
\label{Def: Rigid tropical map}
A family of tropical maps $h:\Gamma\to\Sigma(X_0)$ of type $\tau$ is
\emph{rigid} if the corresponding basic monoid $Q(\tau)$ from
Definition~\ref{Def: type of tropical map},(3) is isomorphic to $\NN$.
\end{definition}

In the language of polyhedral complexes, being rigid is equivalent to saying
that the restriction $\ol h: \ol\Gamma\arr \Delta(X)$ of $h$ to the fiber over
$1\in \RR_{\ge 0}=\Sigma(b_0)$ cannot be deformed as a map of generalized
polyhedral complexes. In other words, as a traditional tropical map, any
deformation of $\ol h$ keeping the combinatorial data (i.e.\ of constant type)
is trivial.
\smallskip

The following decomposition of the Artin stack $\fM(\cX_0,\beta')$ according to
rigid tropical curves is the main result of this section.

\begin{theorem} (Virtual Decomposition.)
\label{Thm: Virtual decomposition}
For each irreducible component $W_\gamma$ of $\fM(\cX_0,\beta')$ according to
Corollary~\ref{Cor:decomposition-toroidal} there exists a unique type $\tau$
of a rigid tropical map such that $W_\gamma$ is an irreducible component of
the image of the finite map $\iota_\tau:\fM(\cX_0,\tau)\arr\fM(\cX_0,\beta')$
from Proposition~\ref{Prop: fM(cX_0,tau) -> fM(cX_0,beta)}.

In particular, $\fM(\cX_0,\tau)$ with the sheaf of ideals $\cK_\tau\subset
\cM_{\fM(\cX_0,\tau)}$ from \eqref{Eqn: cK_tau} is idealized
\end{theorem}

\begin{proof}
The logarithmic stack $\fM(\cX_0,\beta')$ is logarithmically smooth over $b_0$ by Proposition~\ref{fM(cX_0)} and since $\fM_{b_0}/b_0$ is logarithmically smooth. Up to a smooth factor, the map
\[
\fM(\cX_0,\beta')\arr b_0
\]
is locally given by base change to the central fibre of the map of toric varieties $\Spec \kk[Q] \arr \Spec \kk[\NN]$ with $Q$ the basic monoid of a tropical map to $\Sigma(X)$ of some type $\tau'$ and $\NN\to Q$ induced by the structure map
\[
\Sigma(\pi): \Sigma(X)\arr \Sigma(B)=\RR_{\ge0}.
\]
Locally the subschemes $W_\gamma$ are defined by the toric divisors in
$\Spec\kk[Q]$, which are in bijection to extremal rays in $Q_\RR^\vee$. Each
extremal ray defines a rigid tropical map, say of type $\tau$. Any
localization map of the associated basic monoids $Q_{\tau'}\arr Q_\tau=\NN$ is
the contraction of the codimension one face dual to the one-dimensional cone in
$Q_{\tau'}^\vee$ defined by $\tau$ By the definition of $\cK_\tau$, the monoid
ideal defining the corresponding toric prime divisor agrees with the ideal in
$Q_{\tau'}$ given by $\cK_\tau$. Since this description is compatible with the
restriction of charts, the first statement follows.

Corollary~\ref{Cor:decomposition-toroidal} also shows that $W_\gamma\to b_0$ is
idealized log-smooth. The corresponding sheaf of ideals has just been
checked to agree with $\cK_\tau$ locally along $W_\gamma$. Since
$\fM(\cX_0,\beta')\to b_0$ is log \'etale, $W_\gamma\to b_0$ is even idealized
log \'etale.
\end{proof}

\begin{corollary}
\label{Cor: decomposition of fM(cX_0,beta')}
We have the following equality of top-dimensional algebraic cycles in the pure-dimensional algebraic stack $\fM(\cX_0,\beta')$:
\[
\big[\fM(\cX_0,\beta')\big] = \sum_\tau m_\tau\cdot \big[\iota_\tau(\fM(\cX_0,\tau))\big].
\]
The sum is over all types $\tau$ of rigid tropical maps to $\Sigma(X)$ and
$m_\tau\in\NN\setminus\{0\}$ is the projection of the generator of the dual
basic monoid $Q_\tau^\vee\simeq\NN$ to $\Sigma(b_0)=\RR_{\ge 0}$.
\end{corollary}

\begin{proof}
The statement merely spells out the definition of the multiplicities $m_\tau$ in
Corollary~\ref{Cor:decomposition-toroidal}.
\end{proof}

%=================================================================================
\subsection{Proof of the Decomposition Theorem}
\label{Subsect: Proof of Main Thm}

To prove the Main Theorem, Theorem~\ref{Thm: Decomposition}, it remains to apply
the virtual bivariant machinery developed by Costello \cite{CO} and Manolache
\cite{Mano}. We need two lemmas.

\begin{lemma}
\label{Lem: degree of fM_tau to fM}
The degree of the finite map
\[
\iota_\tau:\fM(\cX_0,\tau)\arr \iota_\tau\big(\fM(\cX_0,\tau)\big)\subset\fM(\cX_0,\beta')
\]
from Proposition~\ref{Prop: fM(cX_0,tau) -> fM(cX_0,beta)},(2) over any irreducible component of the image is $|\Aut(\tau)|$.
\end{lemma}

\begin{proof}
The description of the smooth cover of $\fM(\cX_0,\beta')$ given in the proof of
Theorem~\ref{Thm: Virtual decomposition} shows that each geometric generic
point $\Spec K\to \fM(\cX_0,\beta')$ of $\iota_\tau\big(\fM(\cX_0,\tau)\big)$
is a basic logarithmic map to $\cX_0$ over $b_0$, defined over $K$ and
with basic monoid $Q(\tau)=\NN$ and tropical type isomorphic to $\tau$. Thus a
geometric generic point of $\fM(\cX_0,\tau)$ is a basic logarithmic map
$(C/S,\bp,f)$ to $\cX_0$ over a standard logarithmic point $S=\Spec(\NN\arr K)$.
Writing $\tau=(G,\bg,\bsigma,\bu)$, the fibre of $\iota_\tau$ over $(C/S,\bp,f)$
is an isomorphism of the dual intersection graph of $C$ with $G$ identifying
$\bg,\bsigma,\bu$ with the genera, strata and contact orders of $(C/S,\bp,f)$.
The statement now follows by observing that the automorphism group $\Aut(\tau)$
of the decorated graph $\tau$ acts simply transitively on this set of
isomorphisms of graphs.
\end{proof}

As an intermediate object we define the \emph{stack of basic stable logarithmic
maps marked by a tropical type $\tau$} by 
\begin{equation}
\label{Eqn: scrM_tau(X_0)}
\scrM_\tau(X_0,\beta):= \fM(\cX_0,\tau)\times_{\fM(\cX_0,\beta')}\scrM(X_0,\beta).
\end{equation}
Compared to $\scrM(X_0,\ttau)$, this stack keeps the total curve class $A$ from
$\beta=(g,A,u_{p_1},\ldots,u_{p_k})$, but drops the restriction on the distribution of $A$ to the subcurves given by the vertices.

For the following statement recall that $\scrM(X_0,\ttau)$ is the
stack defined in \eqref{Eqn: stack of decorated log maps} of basic stable log
maps over $b_0$ marked by the decorated type $\ttau$ and
\[
j_\ttau: \scrM(X_0,\ttau)\arr \scrM(X_0,\beta)
\]
is the morphism forgetting the marking.

\begin{lemma}
\label{Lem: decomposition of scrM according to curve class}
Let $\tau=(G,\bg,\bsigma,\bu)$ be the type of a tropical map to $\Sigma(X_0)$ and $\beta=(g,A,u_{p_1},\ldots,u_{p_k})$. Then we have the decomposition
\[
\scrM_\tau(X_0,\beta) = \coprod_{\bA}\scrM\big(X_0,\ttau\big),
\]
where the sum is over all
$\bA: V(G)\to H_2^+(X_0)$ with $|\bA|=A$ and $\ttau=(\tau,\bA)$.
\end{lemma}

\begin{proof}
The result follows since the map $\bA: V(G)\to H_2^+(X_0)$ of curve classes is
locally constant on $\scrM_\tau(X_0,\beta)$.
\end{proof}

Before stating the Main Theorem, we note that $\scrM_\tau(X_0,\beta)$ inherits a
perfect obstruction theory\footnote{$\fE$ is the gothic letter ``E''.} $\fE_\tau$
over $\fM(\cX_0,\tau)$ from the perfect obstruction theory $\fE$ of
$\scrM(X_0,\beta)$ over $\fM(\cX_0,\beta')$ by base change by $\iota_\tau:
\fM(\cX_0,\tau)\to \fM(\cX_0,\beta')$. Restricting to the open substacks
$\scrM(X_0,\ttau)\subset \scrM_\tau(X_0,\beta)$ in Lemma~\ref{Lem: decomposition
of scrM according to curve class}, we also have an obstruction theory
$\fE_\ttau$ on $\scrM(X_0,\ttau)$. If $\tau$ is rigid, $\fM(\cX_0,\tau)$ is
pure-dimensional of the same dimension as $\fM(\cX_0,\beta')$. Thus we have
virtual fundamental classes
\[
[\scrM(X_0,\beta)]^\virt,\quad [\scrM_\tau(X_0,\beta)]^\virt,\quad [\scrM(X_0,\ttau)]^\virt
\]
on the moduli spaces $\scrM(X_0,\beta)$, $\scrM_\tau(X_0,\beta)$ and $\scrM(X_0,\ttau)$.
\medskip

Here is our main theorem, stated as Theorem~\ref{Thm: Decomposition} in the introduction.

\begin{theorem}
\label{Thm: Main}
For any $\beta=(g,A,u_{p_1},\ldots,u_{p_k})$ we have the equality
\[
[\scrM(X_0,\beta)]^\virt = \sum_{\ttau=(\tau,\bA)}  \frac{m_\tau}{|\Aut(\tau)|}\,
{j_\ttau}_*[\scrM(X_0,\ttau)]^\virt
\]
in the Chow group of the underlying stack $\ul\scrM(X_0,\beta)$ with
coefficients in $\QQ$. The sum is over all isomorphism classes of decorated
types of rigid tropical maps $\ttau=(G,\bg,\bsigma,\bu,\bA)=(\tau,\bA)$ of total
genus $|\bg|=g$, total curve class $|\bA|=A$ and $|L(G)|=k$.
\end{theorem}

\begin{proof}
By Corollary~\ref{Cor: decomposition of fM(cX_0,beta')} and Lemma~\ref{Lem: degree of fM_tau to fM} we can write the fundamental class of $\fM(\cX_0,\beta')$ as
\begin{equation}
\label{Eqn: dec of f[M(cX_0,beta')]}
[\fM(\cX_0,\beta')]= \sum_\tau \frac{m_\tau}{|\Aut(\tau)|}\,
 {\iota_\tau}_*[\fM(\cX_0,\tau)].
\end{equation}
For each $\tau$, compatibility of virtual pull-back with push-forward
\cite[Thm.~4.1,(3)]{Mano} applied to the cartesian square
\[
\xymatrix{
\scrM_\tau(X_0,\beta) \ar[d]_q&\hspace{-27pt}= {\displaystyle\coprod_\bA} \scrM(X_0,(\tau,\bA))\ar[r]^(.57){j_\tau}&\scrM(X_0,\beta)\ar[d]^p\\
\fM(\cX_0,\tau) \ar[rr]^{\iota_\tau} &&\fM(\cX_0,\beta')
}
\]
yields
\[
p_\fE^!{\iota_\tau}_*[\fM(\cX_0,\tau)] =
{j_\tau}_* q_{\fE_\tau}^! [\fM(\cX_0,\tau)] =
{j_\tau}_* [\scrM_\tau(X_0,\beta)]^\virt.
\]
Moreover, from Lemma~\ref{Lem: decomposition of scrM according to curve class} and the definition of $\fE_\ttau$ by restriction of $\fE_\tau$, it holds
\[
[\scrM_\tau(X_0,\beta)]^\virt = \sum_\bA [\scrM(X_0,(\tau,\bA))]^\virt.
\]
Plugging the last two equalities into \eqref{Eqn: dec of f[M(cX_0,beta')]} now
gives the desired result:
\begin{eqnarray*}
[\scrM(X_0,\beta)]^\virt &=&
p_\fE^![\fM(\cX_0,\beta')] \ =\ \sum_\tau \frac{m_\tau}{|\Aut(\tau)|}\,
p_\fE^!{\iota_\tau}_*[\fM(\cX_0,\tau)]\\
&=& \sum_\tau \frac{m_\tau}{|\Aut(\tau)|}\,
{j_\tau}_* [\scrM_\tau(X_0,\beta)]^\virt\\
&=&  \sum_{\ttau=(\tau,\bA)} \frac{m_\tau}{|\Aut(\tau)|}\,
 {j_\ttau}_*[\scrM(X_0,\ttau)]^\virt.
\end{eqnarray*}
\end{proof}

%=================================================================================
\section{Logarithmic modifications and transversal maps}
\label{calculationalsection}

There is a general strategy which is often useful for constructing stable
logarithmic maps. This is the most powerful tool we have at our disposal at the
moment; eventually, the hope is that gluing technology will replace this
construction. However, we expect it to be generally useful, as
illustrated by the examples in the next section.

Suppose we wish to construct a stable logarithmic map to $X/B$, and as usual $X$
logarithmically smooth with a Zariski logarithmic structure over one-dimensional
$B$ with logarithmic structure induced by $b_0\in B$. Suppose further we wish
the stable logarithmic map to map into the fibre $X_{0}$ over $b_0$.
Generalizing a method introduced in \cite{NS06}, this construction is
accomplished by the following two-step process: (1) Apply a logarithmic
modification\footnote{A logarithmic modification is a proper, birational and log \'etale morphism \cite{FKato2}.} of $X$ to reduce to a transverse situation. (2) Study logarithmic enhancements in the transverse case.

%---------------------------------------------------------------------------------
\subsection{Logarithmic modifications}
First, we will choose a logarithmic modification $h:\tilde X\arr X$. The
modification $h$ is chosen to accommodate a situation at hand --- in our
applications the datum of a rigid tropical map. 

Given a modification $h$, \cite{AW} constructed a morphism
$\scrM(h):\scrM(\tilde X/B)\arr \scrM(X/B)$ of moduli stacks of basic
stable logarithmic maps, satisfying
\[
\scrM(h)_*([\scrM(\tilde X/B)]^{\virt})=[\scrM(X/B)]^{\virt}.
\]
The construction of $\scrM(h)$ is as follows. Given a stable logarithmic map
$\tilde f:\tilde C/S\arr \tilde X/B$, one obtains on the level of schemes
the stabilization of $h\circ \tilde f$, i.e., a factorization of $h\circ \tilde
f$ given by 
\[
\underline{\tilde C}/\underline{S}\stackrel{g}{\longrightarrow}
\underline{C}/\underline{S}\arr \underline{X}
\]
such that $\underline{C}/\underline{S}\arr \underline{X}$ is a stable map. One
gives $\underline{C}$ the logarithmic structure $\cM_{C}:=g_*\cM_{\tilde C}$,
and with this logarithmic structure one obtains a factorization of $h\circ
\tilde f$ through $C$ at the level of log schemes, giving $f:C/S \arr X/B$.
Note that this is one of the rare occasions where push-forward of
logarithmic structures behaves well. If $\tilde f$ was basic, there is no
expectation that $f$ is basic, but by \cite[Prop.~1.22]{GS} there is a unique
basic map with the same underlying stable map of schemes such that the above
constructed $f$ is obtained by pull-back from the basic map. This
yields the map $\scrM(h)$.

%---------------------------------------------------------------------------------
\subsection{Transverse maps, logarithmic enhancements, and strata}
Second, if we have a stable map to $\underline{X}_0$ which interacts
sufficiently well with the strata, we will compute in
Theorem~\ref{curveconstruction} the number of log enhancements of this curve.
This generalizes a key argument of Nishinou and Siebert in \cite{NS06}. There
are two differences: our degeneration $X \arr B$ is only logarithmically smooth
and not necessarily toric; and the fibre $X_0$ is not required to be reduced.
Not requiring $X_0$ to be reduced makes the situation more complex and perhaps
explains why it was avoided in the past; we hope our treatment here will find
further uses. The precise meaning of ``interacting well with logarithmic strata"
is as follows:

\begin{definition}[{Transverse maps and constrained points}]
\label{transversaldef}
Let $X\arr B$ be a logarithmically smooth morphism over $B$ one-dimensional
carrying the divisorial logarithmic structure $b_0\in B$ as usual. Let
$X_0^{[d]}$ denote the union of the codimension $d$ logarithmic
strata of $X_0$. Suppose $\ulf:\underline{C}/\Spec
\kk\arr \underline{X}_0$ is a stable map. We say that $\ulf$ is a
\emph{transverse map} if the image of $\ulf$ is contained in $X_0^{[0]}\cup
X_0^{[1]}$, and $\ulf^{-1}(X_0^{[1]})$ is a finite set.

We call a node $q\in \ulC$ a {\em constrained node} if $\ulf(q)\in X^{[1]}_0$
and otherwise it is a {\em free node}. Similarly a marked point $x\in \ulC$ with
$\ulf(x)\in X^{[1]}_0$ is a {\em constrained marking}, otherwise it is a {\em
free marking}.
\end{definition}

The term ``transverse map" is shorthand for ``a map meeting strata in a
logarithmically transverse way".
\medskip

\paragraph{\bf Cones and strata in the transverse setting} For the rest of this
section strata of higher codimension are irrelevant and we henceforth assume
$X_0= X_0^{[0]} \cup X_0^{[1]}$. Then $\Sigma(X_0)$ is a purely two-dimensional
cone complex, with rays in bijection with the irreducible components of $X_0$.
There are two types of two-dimensional cones: first, there is one cone for each
component of the double locus $X_0^{[1]}$; second, there is one cone for each
other component of $X_0^{[1]}$, forming a smoth divisor in the regular locus of $X_0$.

\paragraph{\bf Logarithmic enhancement of a map} We codify what it means to take
a stable map and endow it with a logarithmic structure:

\begin{definition}
Let $X \arr B$ be as above and $\ulf: \uC \arr \uX_0$ a stable map. A {\em
logarithmic enhancement} $f: C \arr X$ is a stable logarithmic map whose
underlying map is $\ulf$. Two logarithmic enhancements $f_1,f_2$ are {\em
isomorphic enhancements} if there is an isomorphism between $f_1$ and $f_2$
which is the identity on the underlying $\ulf$. Otherwise we say they are {\em
non-isomorphic} or {\em distinct enhancements}.
\end{definition}

\paragraph{\bf Discrete invariants in the transverse case}

\begin{notation}
\label{Not: Discrete invariants transverse case}
Let $\ulf: \ul C/\Spec\kk \arr \ul X_0$ be a transverse map and $x\in \ul C$ a
closed point with $\ulf(x)$ contained in a stratum $S\subset X_0^{[1]}$ and let
$\eta\in \ul C$ be a generic point with $x\in\cl(\eta)$. We now assoicate a number of invariants to the pair $(\eta,x)$, all related to the rank two toric monoid $P_x=\ocM_{X,\ulf(x)}$. Denote by $m_{\eta,x}\in P_x$ the generator of the
kernel of the localization map $P_x\arr \ocM_{X,\ulf(\eta)}\simeq\NN$ and by
$m'_{\eta,x}\in P_x$ the generator of the other extremal ray. Denote by
$n_{\eta,x},n'_{\eta,x}\in P_x^\vee$ the dual generators of the extremal rays of
$P_x^\vee$, satisfying $\langle n_{\eta,x},m_{\eta,x}\rangle=0$. A third
distinguished element $\rho_x\in P_x$ is defined by pulling back the generator of
$\Gamma(B,\ocM_B)=\NN$ under the log morphism $X\arr B$.
\end{notation}

For the following discussion denote by $\ell(m)$ the integral length of an
element $m\in M\otimes_\ZZ\QQ$, that is, for $m\neq0$ the maximum of
$\alpha\in\QQ_{>0}$ with $\alpha^{-1}\cdot m\in M$, while
$\ell(0)=0$.

\begin{definition}
\begin{enumerate}
\item
The \emph{index} of $x\in\ul C$ or of the stratum $S\subset X_0^{[1]}$
containing $\ulf(x)$ is the index of the sublattices in $P_x^\gp$ or in $P_x^*$
generated by $m_{\eta,x},m'_{\eta,x}$ and $n_{\eta,x},n'_{\eta,x}$,
respectively, that is,
\[
\ind(S)= \ind_x = \langle n_{\eta,x},m'_{\eta,x}\rangle = \langle n'_{\eta,x}, m_{\eta,x}\rangle. 
\]
For a constrained node $x=q$, the \emph{length} $\lambda(q)=\lambda(S)\in\QQ$ is the
integral length of the interval $\rho_q^{-1}(1)$ when viewing $\rho_q$ as a map
$P^*_q\otimes_\ZZ\QQ \to\QQ$.
\item
If $\eta\in \ul C$ is a generic point with $x\in\cl(\eta)$, denote by
$w_{\eta,x}\in \NN\setminus\{0\}$ the  {local} intersection number of $\ul
f|_{\cl(\eta)}$  {at $x$} with $S$ inside the irreducible component of $X_0$ containing
$\ulf(\eta)$. 
\end{enumerate}
\label{Def: Discrete invariants transverse case}

When the choice of $x$ and $\eta$ is understood we write $m_1=m_{\eta,x}$,
$m_2=m'_{\eta,x}$, $n_1=n_{\eta,x}$, $n_2=n'_{\eta,x}$, $\rho_x\in P_x$ and
$w_1=w_{\eta,x}$.
\end{definition}

\paragraph{\bf Relations between discrete invariants}
\begin{lemma}
\label{Lem: multiplicities versus rho}
In the situation of Definition~\ref{Def: Discrete invariants transverse case}
denote by $\mu_1$ the multiplicity of the irreducible component of $\ul X_0$
containing $\ulf(\eta)$. If the stratum $S\subset X_0^{[1]}$ is contained in
two irreducible components of $\ul X_0$, denote by $\mu_2$ the multiplicity of
the other component and otherwise define $\mu_2=0$. 
\[
(1)\ \ \mu_i= \langle n_i,\rho_x\rangle.\quad
(2)\ \ \ind_x \cdot\rho_x =\mu_2 m_1+\mu_1 m_2.\quad
(3)\ \ \lambda(q)=\frac{\ell(\rho_q)\cdot \ind_q}{\mu_1\mu_2}.
\]
In particular, if $X_0$ is reduced then $\mu_i\in\{0,1\}$ for all $i$ and\, $\ind_x\cdot \rho_x= m_1+m_2$, $\lambda(q)=\ell(\rho_q)\cdot \ind_q$.
\end{lemma}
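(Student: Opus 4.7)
The plan is to reduce all three parts to direct calculations in the rank-two toric monoid $P_x = P_q$ and its dual $P_x^*$. For part (1), I would first identify the generization map $P_x \twoheadrightarrow P_\eta = \NN$ with the pairing $\langle n_1,\,\cdot\,\rangle\colon P_x\to\NN$: both are, up to sign, the unique surjective homomorphism $P_x^\gp\to\ZZ$ whose kernel is $\ZZ m_1$ and which sends $P_x$ into $\NN$. Since $X_0$ is locally cut out by $z^{\rho_x}$, the multiplicity $\mu_1$ is the image of $\rho_x$ under this generization, which is $\langle n_1,\rho_x\rangle$. The case $i=2$ is identical when $S$ lies on two components. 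When $S$ lies on only one component, the ray of $P_x^\vee$ spanned by $n_2$ corresponds to a ``horizontal'' log divisor $D\not\subset X_0$; since $z^{\rho_x}$ does not vanish along $D$, we get $\langle n_2,\rho_x\rangle = 0 = \mu_2$ by convention.

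Part (2) is then immediate linear algebra: since $m_1, m_2$ form a $\QQ$-basis of $P_x^\gp\otimes\QQ$, write $\rho_x = a\, m_1 + b\, m_2$ and pair with $n_1, n_2$. Using $\langle n_1,m_1\rangle = 0$ and $\langle n_1,m_2\rangle = \ind_x$ together with part (1), we obtain $\mu_1 = b\cdot \ind_x$, hence $b = \mu_1/\ind_x$; symmetrically $a = \mu_2/\ind_x$. Clearing denominators yields $\ind_x\cdot\rho_x = \mu_2 m_1 + \mu_1 m_2$ in $P_x^\gp$.

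For part (3) (in the case $\mu_1, \mu_2 > 0$, which is the relevant one for a node sitting at an intersection of two components), the set $\rho_q^{-1}(1)\subset \Hom(P_q,\QQ_{\ge 0})$ consists of elements $\alpha n_1 + \beta n_2$ with $\alpha,\beta\ge 0$ and $\alpha\mu_1 + \beta\mu_2 = 1$ by (1). This is the segment joining $A = (1/\mu_1)n_1$ and $B = (1/\mu_2)n_2$, so that $\lambda(q) = \ell(B-A) = \ell(\mu_1 n_2 - \mu_2 n_1)/(\mu_1\mu_2)$. I would finish by an explicit coordinate computation: choose a $\ZZ$-basis $f_1, f_2$ of $P_q^\gp$ with $m_1 = f_1$, so $m_2 = a f_1 + b f_2$ with $\gcd(a,b) = 1$ and $\ind_q = b$. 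In the dual basis one finds $n_1 = f_2^*$ and $n_2 = b f_1^* - a f_2^*$. Writing $\rho_q = p f_1 + \mu_1 f_2$ (the $f_2$-coordinate being $\mu_1$ by (1)) and applying (1) to $n_2$ gives $\mu_2 = bp - a\mu_1$; a short substitution then yields $\mu_1 n_2 - \mu_2 n_1 = b(\mu_1 f_1^* - p f_2^*)$, whose integral length is $b\cdot\gcd(\mu_1, p) = \ind_q\cdot\ell(\rho_q)$. Dividing by $\mu_1\mu_2$ gives the formula. The step requiring most care is the final identification $\ell(\mu_1 n_2 - \mu_2 n_1) = \ind_q\,\ell(\rho_q)$; intrinsically this expresses the fact that $\mu_1 n_2 - \mu_2 n_1$ is orthogonal to $\rho_q$ with respect to the natural pairing, the factor $\ind_q$ accounting for the index of $\langle n_1, n_2\rangle$ inside the full dual lattice $P_q^*$.
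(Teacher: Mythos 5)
The proposal is correct and takes essentially the same approach as the paper: parts (1) and (2) match the paper's argument almost verbatim, and part (3) is the same explicit coordinate computation in $P_q\cong\ZZ^2$ with a slightly different but equally valid normalization of the basis (you allow the $f_1$-coefficient of $m_2$, and hence of $\rho_q$, to be negative, where the paper normalizes both to be positive). Your added care in (1) --- identifying the generization map $P_x\twoheadrightarrow P_\eta$ with $\langle n_1,\cdot\rangle$, and explaining why $\langle n_2,\rho_x\rangle=0$ when $S$ lies on a single component --- is correct and spells out what the paper leaves implicit.
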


\begin{proof}
For (1) note that since $n_i\in P_x^\vee$ is a primitive vector with $\langle
n_i,m_i\rangle =0$, the pairing with $n_i$ computes the integral distance from
the face $\NN \cdot m_i$ of $P_x$. Now \'etale locally, the log smooth
morphism $X\arr B$ is the composition of a smooth map with $\Spec
\kk[P_x]\arr \Spec\kk[t]$ defined by sending $t$ to $z^{\rho_x}\in \kk[P_x]$. Hence the
multiplicity $\mu_i$ equals the integral distance of $\rho_x$ to $\NN\cdot m_i$,
that is, the image of $\rho_x$ under the quotient map $P_x\to P_x/\NN
m_i\simeq\NN$.

For (2), since the sublattice of $P^\gp$ generated by $m_1, m_2$ is of index
$\ind_x$, there are $a_1,a_2\in\ZZ$ with $\ind_x\cdot\rho_x= a_1 m_1+a_2 m_2$.
Pairing with $n_1$ and using~(1) and the definition of $\ind_x$ yields
\[
\ind_x\cdot \mu_1= \ind_x\cdot\langle n_1,\rho_x\rangle = a_2 \langle n_1,m_2\rangle
= a_2\cdot\ind_x.
\]
This shows $a_2=\mu_1$, and similarly $a_1=\mu_2$, yielding the claim.

To prove (3) note that (1) implies
\[
\langle \mu_2 n_1,\rho_q\rangle=\mu_1\mu_2 =
\langle \mu_1 n_2,\rho_q\rangle.
\]
Hence $\rho_q: P_q^*\to \NN$ maps both $\mu_2 n_1$ and $\mu_1 n_2$ to
$\mu_1\mu_2$. Since $\lambda(q)$ is defined as the integral length of
$\rho_q^{-1}(1)$, we see that $\mu_1\mu_2\cdot\lambda(q)$ equals the integral
length of $\mu_2 n_1-\mu_1 n_2$. Choosing an isomorphism of $P_q$ with
\[
\ZZ^2\cap\big(\RR_{\ge 0}\cdot (1,0) +\RR_{\ge 0}\cdot (r,s)\big)
\]
with $r,s>0$ pairwise prime and $\rho_q$ mapping to $(a,c)$, then
\[
m_1=(1,0),\quad m_2=(r,s),\quad
\mu_1= c,\quad \mu_2= as-cr,\quad \ind_q=s.
\]
In the dual lattice $P_q^*\simeq\ZZ^2$ we have $n_1=(0,1)$, $n_2=(s,-r)$ and
$\mu_2 n_1-\mu_1 n_2= s\cdot(-c,a)$ has integral length $\ind_q\ell(\rho_q)$.
Thus $\lambda(q)= \ind_q\ell(\rho_q)/\mu_1\mu_2$ as claimed.
\end{proof}

\paragraph{\bf Necessary conditions for enhancement}
As we now show, the data listed in Definition~\ref{Def: Discrete
invariants transverse case} determine the discrete invariant $u_x\in P_x^\vee$
at each special point $x\in \ul C$. Recall that
Equation~\eqref{veta1veta2diffeq} characterizing $u_q$ implies $\langle u_q,
\rho_x\rangle=0$. To fix the sign of $u_q$ we use the convention that $\chi_1$
in the defining equation is the generization map to $\eta$. Similarly, for each
marked point $p$, it holds $\langle u_p, \rho_p \rangle=0$ by definition of
$u_p$. We now deduce a number of necessary conditions for a logarithmic
enhancement of a transverse stable map to exist.

\begin{proposition}
\label{Prop: type of transverse stable log map}
Let $f:C\arr X$ be a logarithmic enhancement of a transverse stable map $\ul
f:\ul C\arr \ul X_0$. Let $\eta\in\ul C$ be a generic point and $x\in\cl(\eta)$.
If $\ulf(x)\in X_0^{[1]}$ then following Definition~\ref{Def: Discrete invariants
transverse case} write $m_1=m_{\eta,x}$, $m_2=m'_{\eta,x}$, $n_1=n_{\eta,x}$,
$n_2=n'_{\eta,x}$, $\rho_x\in P_x$ and $w_1=w_{\eta,x}$.\\[1ex]
\noindent
I) {\bf (Node)} If $x=q$ is a constrained nodal point of $\ul C$, then the second generic
point $\eta'$ of $C$ with $x\in\cl(\eta')$ maps to a different irreducible
component of $X_0$ than $\eta$. Moreover, with $w_2=w_{\eta',x}$ the following
holds:
\begin{enumerate}
\item
$\displaystyle u_q= \frac{1}{\ind_q}\cdot(w_1 n_2-w_2 n_1)$.
\item
$u_q(m_1)=w_1$, $u_q(m_2)=-w_2$.
\item
$\mu_1 w_2=\mu_2 w_1$.
\item
The integral length of $u_q$ equals $\displaystyle \ell(u_q)= \frac{\mu_2
w_1\lambda(q)}{\ind_q} = \frac{w_1}{\mu_1}\ell(\rho_q)$.
\end{enumerate}
If $x=q$ is a free node then $u_q=0$.\\[2ex]
\noindent
II) {\bf (Marked point)} If $x$ is a smooth point of $\ul C$, then $\ulf(x)$ is contained in only
one irreducible component of $X_0$. Moreover, if $x=p$ is a marked point then
$u_p=0$ in the free case, while in the constrained case the following holds.
\begin{enumerate}
\item
$w_1$ is a multiple of $\ind_p$.
\item
$\displaystyle u_p= \frac{w_1}{\ind_p} n_2$.
\end{enumerate}
\end{proposition}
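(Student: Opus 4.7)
The plan is to extract each formula from the governing identity~\eqref{veta1veta2diffeq} for $u_q$, respectively the defining projection for $u_p$, by computing $\varphi(m)$ explicitly on the extremal-ray generators $m_1, m_2$ of the rank-two monoid $P_x$. The key calibration I need is that for $m \in P_x$, the $\NN^k$-coefficients of $\varphi(m) \in \ocM_{C,x}$ (with $k=2$ at a node and $k=1$ at a marked point) equal the scheme-theoretic orders of vanishing of $f^*m$ along the relevant branches; whenever such a branch lies entirely inside the zero locus of $m$, the corresponding coefficient vanishes and all the vanishing is absorbed by the $Q$-component. Throughout I will carefully distinguish the two uses of the symbol $\rho$: the element $\rho_q\in Q$ appearing in~\eqref{veta1veta2diffeq} from $\rho_x\in P_x$ of Notation~\ref{Not: Discrete invariants transverse case}, the two being related by $\varphi_{\bar x}(\rho_x)=\rho_q$.

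The first step is to work out the generization maps at a node: writing $\ocM_{C,q}=Q\oplus_\NN(\NN\epsilon_1\oplus\NN\epsilon_2)$ with $\epsilon_1+\epsilon_2=\rho_q$ and $\epsilon_i$ the class of the local parameter $s_i$ on the $\eta_i$-branch, the fact that $s_i$ becomes a unit at $\eta_i$ forces $\epsilon_i\mapsto 0$, so $\epsilon_j\mapsto\rho_q$ for $j\neq i$; hence generization to $\eta_i$ sends $(q',a_1\epsilon_1+a_2\epsilon_2)\mapsto q'+a_j\rho_q$. At a marked point, $\ocM_{C,p}=Q\oplus\NN$ generizes to $Q$ by projection. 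These identities are the workhorse throughout.

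For Part I, the ``different components'' claim follows by contradiction: if both branches mapped to a common component, then $m_1=m_2=:m$ would be a non-unit with $\chi_1(m)=\chi_2(m)=0$, so the two generizations of $\varphi(m)$ would force $q'=a_1=a_2=0$, giving $\varphi(m)=0$ and contradicting locality of $\varphi_{\bar q}$. Given distinct components, the calibration yields $\varphi(m_1)=w_1\epsilon_1$ and $\varphi(m_2)=w_2\epsilon_2$. Generizing $\varphi(m_1)$ to $\eta_2$ and substituting $\chi_2(m_1)=\ind_q$ into~\eqref{veta1veta2diffeq} gives $u_q(m_1)\rho_q=w_1\rho_q$, so $u_q(m_1)=w_1$; symmetrically $u_q(m_2)=-w_2$, with the sign arising from the orientation asymmetry in~\eqref{veta1veta2diffeq}, and (1) then follows by expanding $u_q$ in the dual basis $\{n_1,n_2\}$. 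For (3), I apply~\eqref{veta1veta2diffeq} to $m=\rho_x$: since $\chi_i(\rho_x)=\mu_i$ by Lemma~\ref{Lem: multiplicities versus rho}(1) and base compatibility forces $\mu_i\varphi_{\eta_i}(1)$ to equal the common image in $Q$ of the base parameter (independent of $i$), the right-hand side vanishes, so $u_q(\rho_x)=0$; combining this with Lemma~\ref{Lem: multiplicities versus rho}(2) and (1)--(2) yields $\mu_2 w_1=\mu_1 w_2$. Item (4) is a direct length computation using (1)--(3) together with the identity $\ell(\mu_1 n_2-\mu_2 n_1)=\ind_q\ell(\rho_q)$ established in the proof of Lemma~\ref{Lem: multiplicities versus rho}(3). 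A free node has $P_q$ of rank at most one, hence $u_q=0$ trivially.

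Part II proceeds analogously. The ``only one component'' claim splits into cases: for a non-marked smooth $x$ with $\ulf(x)\in X_0^{[1]}$, the local log structure $\ocM_{C,x}=Q$ cannot accommodate the finite scheme-theoretic vanishing of $f^*m_i$ for the appropriate $m_i$, contradicting the existence of an enhancement; for a constrained marking I expand $\ind_p\varphi(\rho_x)=\mu_2\varphi(m_1)+\mu_1\varphi(m_2)$ via Lemma~\ref{Lem: multiplicities versus rho}(2) and read off the $\NN$-component as $0=\mu_2 w_1+\mu_1\cdot(\text{non-negative})$, forcing $\mu_2=0$ whenever $w_1>0$ and thereby confining $\ulf(p)$ to a single component. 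With $\mu_2=0$ the calibration gives $\varphi(m_1)=(0,w_1)$ and hence $u_p(m_1)=w_1$, while $\varphi(m_2)$ has trivial $\NN$-component and hence $u_p(m_2)=0$; this forces $u_p=(w_1/\ind_p)n_2$, and integrality in $P_p^*$ yields $\ind_p\mid w_1$, giving II(1). Free markings have $P_p$ of rank at most one and hence $u_p=0$. The main obstacle throughout will be the calibration step: rigorously justifying that the scheme-theoretic order of vanishing of $f^*m$ along each branch equals the corresponding $\NN^k$-coefficient of $\varphi(m)$, particularly when a branch lies inside the zero locus of $m$ and the extra vanishing is shifted into the $Q$-component; once this calibration is established, the remainder of the argument reduces to substitution and linear algebra on $P_x$ and $P_x^*$.
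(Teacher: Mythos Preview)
Your approach is correct and follows essentially the same strategy as the paper's proof: compute $\varphi_{\bar x}(m_i)$ explicitly via the compatibility of $f^\flat$ with the structure maps to $\cO_C$ (your ``calibration''), then read off $u_q$ and $u_p$ from \eqref{veta1veta2diffeq} and the marked-point projection. The paper streamlines the calibration step by first base-changing to the standard log point, so that $\ocM_{C,q}\simeq S_e\subset\ZZ^2$ and one can write $\ol f_q^\flat(m_i)=w_i\cdot(\text{extremal generator})$ directly; you instead work over general $Q$ with the pushout $Q\oplus_\NN\NN^2$, which is equivalent but slightly heavier. Two cosmetic points: the clause ``substituting $\chi_2(m_1)=\ind_q$'' is not actually used in your derivation of $u_q(m_1)=w_1$ (the generization of $\varphi(m_1)=w_1\epsilon_1$ to $\eta_2$ already gives $w_1\rho_q$ directly), and in Part~II the calibration $\varphi(m_1)=(0,w_1)$ is logically prior to, not a consequence of, $\mu_2=0$. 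Neither affects correctness.
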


\begin{proof}
\emph{Setup for  (I)}. Let $C$ be defined over the log point
$S=\Spec(Q\arr \kk)$. For any generic point $\eta\in\ul C$, there is a
commutative square
\[\begin{CD}
\NN\simeq P_\eta= \ol\cM_{X_0,\ulf(\eta)}@>{\ol f^\flat_\eta}>>\ol\cM_{C,\eta}\\
@AAA @AAA\\
\NN\simeq\ol\cM_{B,b_0} @>>> Q.
\end{CD}\]

\emph{Free node.} In the case of a free
node, both generic points $\eta,\eta'\in \ul C$ containing $q$ in their closure
map to the same irreducible component of $X_0$. Thus $u_q=0$ by the defining
equation~\eqref{veta1veta2diffeq}.

\emph{Image components of constrained node.} Let now $x=q$ be a constrained node. Since the generization map
$\chi_\eta:P_q\arr P_\eta$ is a localization of fine monoids there exists $m\in
P_q\setminus\{0\}$ with $\chi_\eta(m)=0$. Then also $\ol f^\flat_q(m)$ is a
non-zero element in $\ol\cM_{C,q}$ with vanishing generization at $\eta$. But
$\ol\cM_C$ has no local section with isolated support at $q$. Hence
$\chi_{\eta'}(m)\neq0$, which implies that the two branches of $C$ at $q$
map to different irreducible components of $X_0$.

\emph{Computations for a constrained node.} (1) follows from (2) by pairing both sides with $m_1$, $m_2$ since these
elements generate $P_q\otimes_\ZZ\QQ$. We now prove (2). Since $u_q$ is
preserved under base-change, we may assume $C$ is defined over the standard log
point $\Spec(\NN\to\kk)$. Then $\ocM_{C,q}\simeq S_e$ for some
$e\in\NN\setminus\{0\}$ with $S_e$ the submonoid of $\ZZ^2$ generated by
$(e,0),(0,e),(1,1)$, see e.g.\ \cite[\S1.3]{GS}. The generator $1\in\NN$ of the
standard log point maps to $(1,1)$, while a chart at $q$ maps $(e,0)$ to a
function restricting to a coordinate on one of the two branches of $C$, say on
$\cl(\eta)$, while vanishing on the other. Similarly, $(0,e)$ restricts to a
coordinate on $\cl(\eta')$. By transversality we conclude
\[
\ol f_q^\flat(m_1)=w_1\cdot(e,0),\quad \ol f_q^\flat(m_2)=w_2\cdot(0,e).
\]
Equation~\eqref{veta1veta2diffeq} defining $u_q$ says
\begin{equation}
\label{Eq: GS1.8}
\chi_2\circ\ol f_q^\flat-\chi_1\circ \ol f_q^\flat= u_q\cdot e,
\end{equation}
with $\chi_i: S_e\to\NN$ the generization maps. With our presentation, $\chi_1$
and $\chi_2$ are induced by the projections $S_e\subset\ZZ^2\to\ZZ$ to the
second and first factors, respectively. Hence
\begin{eqnarray*}
\big(\chi_2\circ\ol f_q^\flat-\chi_1\circ \ol f_q^\flat\big) (m_1)&=& w_1\cdot e\\
\big(\chi_2\circ\ol f_q^\flat-\chi_1\circ \ol f_q^\flat\big) (m_2)&=& -w_2\cdot e,
\end{eqnarray*}
showing~(2).

(3) is obtained by evaluating (1) on $\rho_q$:
\[
0=\ \ind_q\cdot \langle u_q, \rho_q\rangle
\ =\ w_1\langle n_2,\rho_q\rangle - w_2\langle n_1,\rho_q\rangle
\ =\ w_1\mu_2-w_2\mu_1.
\]

For (4) observe from (1) that $\ind_q\cdot u_q$ is the vector connecting the
extremal elements $w_2n_1$ and $w_1 n_2$ of $P_q^\vee$. Thus $\ind_q\cdot \ell(u_q)$ equals
the integral length of $\rho_q^{-1}(h)$ for $h=\langle w_1 n_2,\rho_q\rangle = \mu_2
w_1= \mu_1 w_2= \langle w_2 n_1,\rho_q\rangle$. This length equals $h\cdot \lambda(q)$,
yielding the stated formula. This finishes the proof of (I).
\smallskip

\emph{Marked point.} Turning to (II), let $x\in \ul C$ be a smooth point with
$\ulf(x)\in X_0^{[1]}$ and again assume without restriction $C$ is defined over
the standard log point. If $s_u\in \cM_{X,\ulf(x)}$ is a lift of $m_1$, then by
transversality, $f_x^\flat(s_u)\in \cM_{C,x}$ maps under the structure
homomorphism $\cM_{C,x}\arr \cO_{C,x}$ to $z^{w_1}$, with $z$ a local coordinate
of $\ul C$ at $x$. Thus $x=p$ is a marked point, $\ocM_{C,x}=\NN^2$ and
\[
\ol f_p^\flat: P_p\longrightarrow \NN^2
\]
maps $m_1$ to $(0,w_1)$. Here we are taking the morphism $C\arr \Spec(\NN\to\kk)$
to be defined by $\NN\to\NN^2$, $1\mapsto (1,0)$. Moreover, by compatibility of
$f_p^\flat$ with the morphism of standard log points that $C$ and $X_0$ are
defined over, $\ol {f_p}^\flat(\rho_p)=(b,0)$ for some $b\in\NN\setminus\{0\}$.
Thus by Lemma~\ref{Lem: multiplicities versus rho},(2),
$\rho_p=\frac{\mu_1}{\ind_p} m_2$ spans an extremal ray of $P_p$. In particular,
$\ulf(p)$ is contained in only one irreducible component of $X_0$ and
$u_p(m_2)=0$. Thus
\[
u_p(m_1)=w_1 = \frac{w_1}{\ind_p} \langle n_2,m_1\rangle,\quad
u_p(m_2)=0= \frac{w_1}{\ind_p} \langle n_2,m_2\rangle.
\]
This shows (2), which implies (1) since $n_2$ is a primitive vector.

Finally, at a free marked point $p\in\ul C$, commutativity over the standard log point again readily implies $u_p=0$.
\end{proof}

\begin{remark}
\label{Rem: Conditions for reduced case}
If $X_0$ is reduced, then in Proposition~\ref{Prop: type of transverse
stable log map},(I) there is a well-defined contact order $w=w_1=w_2$ of $ \ul f$ with the double locus, and the formulas simplify to
\[
u_q=\frac{w}{\ind_q}\big(n_2-n_1\big),\quad \ell(u_q)= w\ell(\rho_q).
\]
\end{remark}

\paragraph{\bf Transverse pre-logarithmic maps}
Summarizing the necessary conditions of Proposition~\ref{Prop: type of
transverse stable log map}, we are led to the following definition.

\begin{definition}
\label{transprelogdef}
Let $X\arr B$ be as above, and let $\ulf:\underline{C}/
\Spec\kk\arr\underline{X}_0$ be a transverse map. We say $\ulf$ is a
\emph{transverse pre-logarithmic map} if any $x\in \ul C$ with $\ulf(x)\in
X_0^{[1]}$ is a special point and if in the notation of Proposition~\ref{Prop:
type of transverse stable log map} the following holds.\\[1ex] 
(I)  {\bf (Constrained node)}~If $x=q$ is a
constrained node then the two branches of $C$ at $q$ map to different
irreducible components of $X_0$. In addition, $\mu_1 w_2=\mu_2 w_1$ and the \emph{reduced
branching order}
\begin{equation}
\label{prelogconditions}
\ol w_q:= \frac{w_i}{\mu_i}\ell(\rho_q), \quad i=1,2
\end{equation}
is an integer.\\[1ex]
(II) {\bf (Constrained marking)}~If $x=p$ is a constrained marking then $\ul f(x)$ is a smooth point of
$X_0$ and $w_1/\ind_p\in\NN$.
\end{definition}

Note that if a logarithmic enhancement of $\ul f$ exists, then by
Proposition~\ref{Prop: type of transverse stable log map} the reduced branching
order $\ol w_q$ agrees with $\ell(u_q)$. Note also that in the case of
reduced $X_0$, we have $\ell(\rho_q)=1$ and all $\mu_i=1$, and hence $\ol
w_q=w_1=w_2$.

\begin{definition}[Base order]\label{Def:base order}
For a transverse pre-logarithmic map $\ulf:\underline{C}/
\Spec\kk\arr\underline{X}_0$ define its \em{base order} $b\in\NN$ to be
the least common multiple of the following natural numbers: (1)~all
multiplicities of irreducible components of $X_0$ intersecting $\ulf(\ul C)$ and
(2)~for each constrained node $q\in\ul C$ the quotient $\mu_1 w_2/\gcd(\ind_q,
\mu_1 w_2)$, notation as in Proposition~\ref{Prop: type of transverse stable log
map}.
\end{definition}

\begin{theorem}
\label{Thm: BasicEnhancementsNecCond}
Let $X\arr B$ be as above, and let $\ulf:\underline{C}/
\Spec\kk\arr\underline{X}_0$ be a transverse map. Suppose that there is
an enhancement of $\ulf$ to a basic stable logarithmic map $f:C/S\arr
X/B$. Then 
\begin{enumerate}
\item
$\ulf$ is a 
transverse pre-logarithmic map.

\item
The combinatorial type of $f$ is uniquely determined up
to possibly a number of marked points $p$ with $u_p=0$, and
the basic monoid $Q$ is
\[
Q=\NN\oplus \bigoplus_{q \text{\ \rm a free node}} \NN.
\]

\item
The map $S=\Spec(Q\arr\kk)\arr B$ induces the map
$\overline\cM_{b_0}=\NN\arr Q$ given by $1\mapsto (b,0,\ldots,0)$, where
the integer $b\in\NN$ is the base order of $\ulf$.
\end{enumerate}
\end{theorem}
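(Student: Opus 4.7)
The plan is to deduce part (1) and the combinatorial-type claim of part (2) directly from Proposition~\ref{Prop: type of transverse stable log map}. At a constrained node, assertion~I.(3) of that proposition gives the balancing $\mu_1 w_2 = \mu_2 w_1$, while I.(4) exhibits $\ell(u_q) = w_i\ell(\rho_q)/\mu_i = \ol w_q$ as a nonnegative integer; at a constrained marked point, assertion~II forces $\ulf(p)$ to be smooth on $X_0$ and to satisfy $w_1/\ind_p \in \NN$. Together these are the conditions defining a transverse pre-logarithmic map, so (1) follows. The same proposition pins down every $u_q$ and $u_p$ from $\ulf$ (zero at free special points, and the explicit integral vectors of I.(1) and II.(2) at constrained ones), so the combinatorial type is determined except for the irrelevant freedom of free markings with $u_p=0$.

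To identify $Q$ itself I would work through the explicit description of $Q^\vee$ in \eqref{Eqn: Basic monoid}. At each generic point $\eta$ of $\ulC$, transversality gives $P_\eta = \NN$, so $V_\eta \in \NN$. Across a free node, $u_q=0$ forces $V_{\eta_1}=V_{\eta_2}$ in the common $\NN$, while $e_q$ is an unconstrained element of $\NN$. Across a constrained node, evaluating the equation $V_{\eta_2}\circ\chi_2 - V_{\eta_1}\circ\chi_1 = e_q u_q$ on the two extremal generators $m_1,m_2$ of $P_q$, and using $\chi_i(m_{3-i})=\ind_q$ together with Proposition~\ref{Prop: type of transverse stable log map}.I.(2), produces the scalar relations
\[
\ind_q V_{\eta_2} = e_q w_1, \qquad \ind_q V_{\eta_1} = e_q w_2.
\]

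The crux is to recognise the global invariant $Z := \mu_\eta V_\eta$, which is $V$ evaluated on the pullback of the generator of $\ocM_{b_0}$. Across a free node $Z$ is trivially preserved, and across a constrained node the two scalar relations above, combined with the pre-log condition $\mu_1 w_2=\mu_2 w_1$ from (1), yield $\mu_1 V_{\eta_1} = \mu_2 V_{\eta_2}$, so $Z$ is preserved there as well. By connectedness of $\ulC$, $Z$ is a single nonnegative integer attached to any point of $Q^\vee$, and conversely $Z$ determines $V_\eta = Z/\mu_\eta$ and $e_q = \ind_q Z/(\mu_2 w_1)$ at every constrained edge. Integrality then demands $\mu_\eta \mid Z$ at each vertex and $\mu_2 w_1/\gcd(\mu_2 w_1,\ind_q) \mid Z$ at each constrained node; the least common multiple of these divisibilities is exactly the base order $b$ of Definition~\ref{Def:base order}. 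Hence the $Z$-part of $Q^\vee$ is $b\NN$, while each free node contributes an independent $\NN$ via $e_q$, giving $Q^\vee \cong \NN \oplus \bigoplus_{q\text{ free}}\NN$ with the first factor generated by the unique element having $Z=b$. Dualising settles~(2); for~(3), the map $\ocM_{b_0}=\NN\to Q$ is by construction dual to the functional $(V,e_\bullet)\mapsto Z$ on $Q^\vee$, which sends the first generator to $b$ and each free-node generator to $0$, so $1\mapsto(b,0,\ldots,0)$.

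The main obstacle is the divisibility bookkeeping in the middle step: one must simultaneously verify sufficiency (every $Z\in b\NN$ lifts to a genuine element of $Q^\vee$, with all $V_\eta$ and $e_q$ landing in $\NN$) and necessity (no smaller multiple works), and check that the lcm of the per-vertex and per-edge divisibilities reduces precisely to the two-clause definition of $b$. The identity $\mu_2 w_1=\mu_1 w_2$ is essential here, both in making the constrained-node condition symmetric in the two branches and in matching Definition~\ref{Def:base order}(2).
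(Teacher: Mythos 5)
Your proof is correct and follows essentially the same approach as the paper. Both arguments compute $Q^\vee$ from its defining description: you do so by explicit monoid-theoretic bookkeeping, introducing the global invariant $Z = \mu_\eta V_\eta$ and deriving the constrained-node scalar relations $\ind_q V_{\eta_2} = e_q w_1$, $\ind_q V_{\eta_1} = e_q w_2$ directly from the basic-monoid equation, while the paper phrases the same content in tropical terms (your $Z$ is what the paper calls $b$, the image of the tropical curve in $\Sigma(B)$, and your second scalar relation becomes the formula $e_q = b\,\lambda(q)/\ell(u_q) = b\,\ind_q/(\mu_1 w_2)$). The integrality bookkeeping that you spell out — checking that the lcm of $\mu_\eta$ over vertices and $\mu_2 w_1/\gcd(\mu_2 w_1,\ind_q)$ over constrained nodes is precisely the base order, and verifying both necessity and sufficiency — is exactly what the paper's proof relies on implicitly after exhibiting the formula for $e_q$; making the invariance of $Z$ across both free and constrained nodes explicit is a small but welcome clarification.
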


\begin{proof}
(1) and (2) follow readily from Proposition~\ref{Prop: type of transverse stable
log map}. For (3), recall that the basic monoid $Q$ is dual to the monoid
$Q^{\vee} \subset Q_{\RR}^{\vee}$, the latter being the moduli space of tropical
maps $h:\Gamma\arr \Sigma(X)$ of the given combinatorial type, and
$Q^{\vee}$ consists of those tropical maps whose edge lengths are integral and
whose vertices map to integral points of $\Sigma(X)$.

If $\eta$ is a generic point of $\ulC$, denote by $\mu_{\eta}$ the multiplicity
of the irreducible component of $(X_0)_{\red}$ in $X_0$ containing $\ulf(\eta)$.
Thus the induced map $\NN\arr P_{\eta}\simeq\NN$ coming from the structure map
$X\arr B$ is multiplication by $\mu_{\eta}$. Write $\rho:\Sigma(X)\arr\Sigma(B)$
for the tropicalization of $X\arr B$. The restriction of $\rho$ to the ray
$\Hom(P_\eta,\RR_{\ge0})$ of $\Sigma(X)$ corresponding to the irreducible
component of $X_0$ containing $\ulf(\eta)$ is multiplication by $\mu_{\eta}$.
Thus given a tropical map $h:\Gamma\arr\Sigma(X)$ with vertex $v_\eta$ for
$\eta\in\ul C$ and $b$ the image of $\rho\circ h$ in $\Sigma(B)$, we see that
$h(v_{\eta})$ is integral if and only if $\mu_{\eta}|b$.

The edges of $\Gamma$ corresponding to free nodes have arbitrary length
independent of $\mu$. But an edge corresponding to a constrained node $q$
must have length 
\begin{equation}
\label{Eqn: e_q}
e_q=b\frac{\lambda(q)}{\ell(u_q)}=b\frac{\ind_q}{\mu_1 w_2}.
\end{equation}
This must also be integral for $h$ to represent a point in $Q^{\vee}$.
Thus the map $\Sigma(S)\arr \Sigma(B)$ must be given by
$(\alpha, (\alpha_q)_q)\mapsto b\alpha$ where $b$ is as given in the
statement of the theorem. Dually, we obtain the stated description of
the map $S\arr B$.
\end{proof}

%---------------------------------------------------------------------------------
\subsection{Existence and count of enhancements of transverse pre-logarithmic maps}
\label{Par: Torically transverse counts}

We now turn to count the number of logarithmic enhancements of a transverse
stable map $\ul f:\ul C\arr \ul X_0$. Denote by $\cM:= \ul f^*\cM_{X_0}$ the
pull-back log structure on $\ul C$ and by $\cM^{\text{Zar}}$ the corresponding
sheaf of monoids in the Zariski topology, noting that the log structure on $X_0$
is assumed to be defined in the Zariski topology.
\smallskip

\paragraph{\bf The torsor of roots} The count of logarithmic enhancements
involves a torsor $\cF$ under a sheaf of finite cyclic groups $\cG$ on a finite
topological space encoding compatible choices of roots of elements occurring in
the construction of logarithmic enhancements. The following discussion
is trivial if $X_0$ is reduced and can be skipped by the reader only interested
in this case. Given a transverse map $\ulf:\ul C/\Spec\kk\arr \ul X_0$, the
finite topological space consists of the set of constrained nodes $q\in\ul C$
and generic points $\eta\in\ul C$. As basis for the topology we take the sets
$U_\eta= \{\eta\}\cup \big\{q\in\cl(\eta)\}$ and $U_q=\{q\}$ (which is opposite
to the topology as a subset of $\ul C$). Let $\rho\in \Gamma(\ul C,\cM)$ be the
preimage of a generator $\rho_0$ of $\cM_{B,b_0}$, that we assume fixed in this
subsection. The stalks at a constrained node $q\in\ul C$ and at a generic point
$\eta\in\ul C$ are various roots of the germs $\rho_x$ of $\rho$:
\begin{eqnarray*}
\cF_q&=&\big\{ \sigma_q\in\cM_q^{\text{Zar}}\,\big|\, \sigma_q^{\ell(\rho_q)}=\rho_q\big\},\\
\cF_\eta&=&\big\{ \sigma_\eta\in\cM_\eta^{\text{Zar}}\, \big|\, \sigma_\eta^{\mu_\eta}
=\rho_\eta\big\}.
\end{eqnarray*}
We note that any of these sets may be empty, as Example~\ref{example:no-lift}
below shows. In such case we do not define a sheaf $\cF$ and declare
$|\Gamma(\cF)| = \emptyset$ in what follows. Otherwise we define the sheaf $\cF$
as follows. For $q\in\cl(\eta)$, a choice $\sigma_\eta$ with
$\sigma_\eta^{\mu_\eta}= \rho_\eta$ determines a unique
$\sigma_{\eta,q}\in\cF_q$ with restriction to $\eta$ equal to
$\sigma_\eta^{\mu_\eta/\ell(\rho_q)}$. Note that by Lemma~\ref{Lem:
multiplicities versus rho},(1) we have $\mu_\eta/ \ell(\rho_q) \in\NN$. We
define the generization map $\cF_\eta\to\cF_q$ by mapping $\sigma_\eta$ to
$\sigma_{\eta,q}$. Observe that a different choice of $\rho$ leads to an
isomorphic sheaf $\cF$. 

Replacing the elements $\rho_q$ and $\rho_\eta$ in the definition of $\cF$ by
the element $1$, we obtain a sheaf $\cG$ of abelian groups, for which $\cF$ is
evidently a torsor.
\smallskip

\paragraph{\bf Global sections of $\cG $ and $\cF$} General theory \cite[Tag
03AH]{stacks-project}, or direct computation, implies that the \emph{set} of
global sections $\Gamma(\cF)$ is a \emph{pseudo-torsor} for the \emph{group}
$G:= \Gamma(\cG)$. Here $G$ is computed as the kernel of the sheaf-axiom
homomorphism
\begin{equation}
\label{Eqn: compatibility group}
\partial: \prod_{\eta\in\ul C}\ZZ/\mu_\eta\longrightarrow \prod_{q\in\ul C}\ZZ/\ell(\rho_q),\quad
\partial(\big(\zeta_\eta\big)_\eta) :=  \Big(\zeta_{\eta(q)}^{\mu_{\eta(q)}/\ell(\rho_q)}\cdot \zeta_{\eta'(q)}^{-\mu_{\eta'(q)}/\ell(\rho_q)}\Big)_q.
\end{equation}
Here $\eta(q),\eta'(q)$ are the generic points of the two adjacent branches of a
constrained node $q\in\ul C$, viewed in the \'etale topology. The notation
implies a chosen order of branches. Multiplication of $\sigma_q$ by $\zeta_q$
and of $\sigma_\eta$ by $\zeta_\eta$ describes the natural action of $G= \Gamma(\cG)$ on
$\Gamma(\cF)$. Note that if $X_0$ is reduced then all $\mu_i=1$ and $G$ is the trivial group.

\begin{lemma}
\label{Lem: G-action}
If $\Gamma(\cF)\neq\emptyset$ the action of $G$ on $\Gamma(\cF)$ is simply
transitive. In particular, it then holds $\big|\Gamma(\cF)\big|= \big|G\big|$.
If the dual intersection graph of $C$ is a tree or if $X_0$ is reduced
then $\Gamma(\cF)\neq\emptyset$.
\end{lemma}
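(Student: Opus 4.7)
For the first assertion I will apply the standard torsor argument. Fix any $\sigma \in \Gamma(\cF)$ and define $\Phi: G \to \Gamma(\cF)$ by $\zeta \mapsto \zeta \cdot \sigma$, using the stalkwise $\cG$-action on $\cF$. Injectivity of $\Phi$ is immediate from stalkwise freeness of this action. For surjectivity, given $\sigma' \in \Gamma(\cF)$, the stalkwise quotients $\zeta_x := \sigma'_x/\sigma_x \in \cG_x$ assemble into a global section of $\cG$, i.e., an element of $G$: the compatibility condition \eqref{Eqn: compatibility group} at each constrained node $q$ holds because both $\sigma$ and $\sigma'$ generize compatibly to $\cF_q$ from either adjacent branch, forcing the ratio at $q$ to be the common value of $\zeta_{\eta(q)}^{\mu_{\eta(q)}/\ell(\rho_q)}$ and $\zeta_{\eta'(q)}^{\mu_{\eta'(q)}/\ell(\rho_q)}$, which is precisely the kernel condition cutting out $G$.

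For the existence assertion when $X_0$ is reduced, every multiplicity $\mu_\eta$ equals $1$, and Lemma~\ref{Lem: multiplicities versus rho}(1) forces $\ell(\rho_q)$ to divide $\mu_{\eta(q)} = 1$, hence $\ell(\rho_q) = 1$ at every constrained node as well. Consequently every stalk reduces to the singleton $\cF_\eta = \{\rho_\eta\}$ or $\cF_q = \{\rho_q\}$, the generization maps are forced to be the identity, and the tuple $(\rho_\eta)_\eta$ is a global section without any further check.

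For the existence assertion when $C$ is rational, the argument has two steps. First I will verify that each stalk is nonempty. At a generic point $\eta$ with $\ul f(\eta)$ on the smooth locus of a component of multiplicity $\mu_\eta$, the local chart for the log smooth morphism $X \to B$ near $\ul f(\eta)$ provides an element $s \in \cM_{X_0, \ul f(\eta)}$ with $s^{\mu_\eta} = \rho$, so its pullback lies in $\cF_\eta$; an analogous argument, combined with the availability of roots of local units on the rational curve $\ul C$, produces an element of $\cF_q$ at each constrained node. Second, since $p_a(\ul C)=0$ means the dual graph of $\ul C$ is a tree, the topological space on which $\cF$ lives (vertices the generic points, edges the constrained nodes) is a forest. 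I will construct a global section by induction along this forest: choose any section on a root vertex of each component, and across each new edge $q$ solve the equation $\sigma^{\mu_\eta/\ell(\rho_q)} = \tau$ in $\cF_\eta$, where $\tau \in \cF_q$ is prescribed by the already-chosen adjacent section. A solution exists because the power map from $\cF_\eta$ to the set of $\ell(\rho_q)$-th roots of $\rho_\eta$ is surjective, reflecting surjectivity of the corresponding map of cyclic groups of roots of unity in the algebraically closed field $\kk$, for which $\ell(\rho_q) \mid \mu_\eta$ is essential.

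The principal difficulty lies in the rational case: establishing stalk nonemptiness requires a careful use of the Zariski-local structure on $X_0$ together with the geometry of units on the rational curve $\ul C$. Once stalks are nonempty, the tree topology rules out any monodromy obstruction to gluing --- and it is precisely a cycle in the dual graph that can produce the obstruction illustrated in the forthcoming Example~\ref{example:no-lift}.
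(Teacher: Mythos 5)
Your treatment of the first two assertions is fine: the simple transitivity argument is the standard pseudo-torsor argument, which is exactly what the paper invokes, and your observation in the reduced case that $\ell(\rho_q)$ divides $\mu_{\eta(q)}=1$ so that every stalk is a singleton is a small but genuine sharpening of the paper's terse ``$\Gamma(\cF)=\prod_q\cF_q$ is non-empty.''

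However, your argument for the rational case has a genuine gap, concentrated in ``Step~1.'' You assert that the local chart for the log smooth morphism $X\to B$ near $\ul f(\eta)$ provides an element $s\in\cM_{X_0,\ul f(\eta)}$ with $s^{\mu_\eta}=\rho$, so that its pullback lies in $\cF_\eta$. This cannot be right as stated: charts for log smooth morphisms exist \'etale-locally, and even in the Zariski stalk a chart element $s$ lifting the generator of $\ocM_{X_0,\ul f(\eta)}\cong\NN$ satisfies only $s^{\mu_\eta}=u\cdot\rho$ for some $u\in\cO^\times_{X_0,\ul f(\eta)}$; producing the desired root requires a $\mu_\eta$-th root of $u$, which is precisely the nontrivial content. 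More damningly, your Step~1 argument for $\cF_\eta\neq\emptyset$ makes no use of the rationality of $C$ at all: if it were valid it would apply verbatim to Example~\ref{example:no-lift}, where $\cF_\eta=\emptyset$ for the generic point of an elliptic $\ul C$. The honest reason stalks are nonempty when $\ul C_\eta\cong\PP^1$ is that the obstruction to extracting a $\mu_\eta$-th root of $\rho_\eta$ lives (via the Kummer sequence) in a torsion subgroup of $\pic$ of the normalized component, which vanishes for $\PP^1$; neither your Step~1 nor the subsequent invocation of ``roots of unity in $\kk$'' captures this. The paper's own proof sidesteps the issue by silently folding stalk nonemptiness into the inductive extension (``we can define $\sigma_\eta$ as any $\mu_\eta/\ell(\rho_q)$-th root...''), which is also where you would need the $\pic(\PP^1)$ argument; your attempt to isolate it as a separate preliminary step does not resolve the difficulty, it just relocates it to a claim that is false in the generality in which you state it.
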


\begin{proof}
Simple transitivity is the fact that $\Gamma(\cF)$ is a pseudo-torsor for $G$.

If $X_0$ is reduced then $\mu_\eta=1$ for all $\eta$ and $\Gamma(\cF)=\prod_q
\cF_q$ is non-empty. If $C$ is rational we can construct a section by inductive
extension over the irreducible components. Indeed, if $\sigma_q\in\cF_q$ and
$\eta$ is the generic point of the next irreducible component, we can define
$\sigma_\eta$ as any $\mu_\eta/\ell(\rho_q)$-th root of the restriction of
$\sigma_q$ to $\eta$. By the definition of $\cF$ this choice then also defines
$\sigma_{q'}$ for all other $q'\in\cl(\eta)$.
\end{proof}

\begin{example}\label{example:no-lift}
Here is a simple example with $\Gamma(\cF)=\emptyset$, in fact
$\cF_\eta=\emptyset$ for the unique point $\eta$ in our space. Let $\ul X\arr \ul
B=\AA^1$ be an elliptically fibred surface with $\ul X_0\subset \ul X$ a
$b$-fold multiple fibre with smooth reduction. Endow $\ul X$ and $\ul B$ with
the divisorial log structures for the divisors $\ul X_0\subset\ul X$ and
$\{0\}\subset \ul B$. Then the generator $\ol\rho_0\in\ol\cM_{B,0}$ maps to $b$
times the generator $\ol\sigma\in\Gamma(\ul X_0,\ol \cM_{X_0})=\NN$. The
preimage of $\ol\sigma$ under $\cM_{X_0^\red}\arr \ol\cM_{X_0^\red}$ is the
torsor with associated line bundle the conormal bundle $N^{\vee}_{X_0^\red|X}$.
This conormal bundle is not trivial, but has order $b$ in $\pic(\ul X_0^\red)$.
Thus there exists no section $\sigma_\eta$ with $\sigma_\eta^b$ extending to a
global section $\rho$ of $\cM_{X_0}$ lifting $\ol\rho= b\cdot\ol\sigma$.
\end{example}

The following statement generalizes and gives a more structural proof of
\cite[Prop.~7.1]{NS06}, which treated a special case with reduced central
fibre.

\begin{theorem} 
\label{curveconstruction}
Suppose given $X\arr B$ as above, and let
\[
\ulf:(\underline{C},p_1,\ldots,p_n)/\Spec\kk\arr\underline{X}_0
\]
be a transverse pre-logarithmic map. Suppose further that the marked points
$\{p_i\}$ include all points of $\ulf^{-1}(X_0^{[1]})$ mapping to non-singular
points of $(X_0)_{\red}$. 

Then there exists an enhancement of $\ulf$ to a basic stable logarithmic
map if and only if $\Gamma(\cF)\neq\emptyset$, in which case the number of pairwise non-isomorphic enhancements is
\[
\frac{|G|}{b} \prod_q \ol w_q.
\]
Here $G$ is as in \eqref{Eqn: compatibility group}, the integer $b$ is the base
order (Definition~\ref{Def:base order}), and the product is taken over the
reduced branching orders \eqref{prelogconditions} at constrained nodes.

If $X_0$ is reduced there is no obstruction to the existence of an
enhancement and the count is $b^{-1}\prod_q \ol w_q$ and $\ol w_q=w_1=w_2$
either of the two contact orders in Definition~\ref{Def: Discrete invariants
transverse case} and Proposition~\ref{Prop: type of transverse stable log map}.
\end{theorem}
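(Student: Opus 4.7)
The plan is to fix all discrete data that are already determined by Theorem~\ref{Thm: BasicEnhancementsNecCond}---the combinatorial type of $f$, the basic monoid $Q=\NN\oplus\bigoplus_{\text{free }q}\NN$, the structure morphism $W\to B$ of base order $b$, and the log structure $\cM_C$ on $\ul C$---and to count isomorphism classes of lifts $f^\flat\colon f^{-1}\cM_X\to \cM_C$ of the determined characteristic morphism $\ol f^\flat$. The existence of one such lift is equivalent to the existence of a basic enhancement, and the count of isomorphism classes equals the total number of lifts modulo the action of $\Aut(\cM_W/\cM_B)$.

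First I would analyze $f^\flat$ at each generic point $\eta\in\ul C$. After fixing a chart of $\cM_X$ near $\ul f(\eta)$ lifting the generator $m_\eta\in P_\eta=\NN$, giving $f^\flat_\eta$ is equivalent to choosing a section $\sigma_\eta\in\cM^{\text{Zar}}_{C,\eta}$ whose image in $\cO_{C,\eta}$ is prescribed by $\ul f$ and which satisfies $\sigma_\eta^{\mu_\eta}=\rho_\eta$; the last relation follows from Lemma~\ref{Lem: multiplicities versus rho}, which identifies the structure morphism $X\to B$ locally with $\rho_\eta=\mu_\eta m_\eta$ at $\eta$. So the local datum is exactly an element of $\cF_\eta$. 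At each constrained node $q$ with branches $\eta,\eta'$, compatibility of $f^\flat$ with the generization maps imposes the gluing condition that the generization of $\sigma_\eta$ to $q$ agree with that of $\sigma_{\eta'}$, which is precisely the sheaf condition defining $\cF$. Thus $(\sigma_\eta)_\eta$ assembles into a global section of $\cF$, so basic enhancements exist if and only if $\Gamma(\cF)\neq\emptyset$, in which case $|\Gamma(\cF)|=|G|$ by Lemma~\ref{Lem: G-action}.

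Next, once a section of $\cF$ is chosen, I would carry out a local computation at each constrained node $q$ in the standard model $\cM_{C,q}\simeq Q\oplus_\NN\NN^2$, using the explicit shape of $f^\flat_q$ from Proposition~\ref{Prop: type of transverse stable log map} together with the relation $\ind_q\rho_q=\mu_2m_1+\mu_1m_2$ of Lemma~\ref{Lem: multiplicities versus rho}. The only remaining freedom turns out to be a single scalar in $\kk^\times$ whose $\ol w_q$-th power is pinned down by the already-chosen generic-point data, so the local choice forms a canonical $\ZZ/\ol w_q\ZZ$-torsor. At free nodes, free markings, and smooth points of $\ul C$ away from $X_0^{[1]}$ the relevant contact orders vanish, so no further choices arise. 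Multiplying, the total number of lifts $f^\flat$ equals $|G|\cdot\prod_q \ol w_q$.

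Finally, to pass to isomorphism classes I would compute $\Aut(\cM_W/\cM_B)$. By Theorem~\ref{Thm: BasicEnhancementsNecCond}(3) the morphism $\ocM_{B,b_0}=\NN\to Q$ sends the generator to $(b,0,\dots,0)$, so an element of $\Aut(\cM_W/\cM_B)$ is a homomorphism $Q^\gp\to\kk^\times$ trivial on $(b,0,\dots,0)$; after discarding the subgroup coming from the free-node summands of $Q$, which acts trivially on the set of lifts, what remains is the cyclic group $\mu_b\subset\kk^\times$ of $b$-th roots of unity acting freely, so one divides by $b$ and obtains the formula $\tfrac{|G|}{b}\prod_q \ol w_q$. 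I expect the main obstacle to be the local node analysis of the preceding paragraph: when $X_0$ is non-reduced, the interplay between the two component multiplicities $\mu_1,\mu_2$ and the smoothing length $\ell(\rho_q)$ produces the $\ZZ/\ol w_q\ZZ$-torsor in a distinctly subtler way than in the reduced toric setting of \cite[Proposition~7.1]{NS06} that this theorem generalizes, and all of the bookkeeping between $\ind_q$, $\mu_i$, $w_i$ and $\ell(\rho_q)$ has to fit together coherently for the formula to come out correctly.
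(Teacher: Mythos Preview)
Your proposal is correct and follows essentially the same route as the paper: count lifts to obtain $|G|\cdot\prod_q\ol w_q$, then divide by the free $\ZZ/b$-action of automorphisms of $W\to B$. The paper organizes the count slightly differently, constructing $\cM_C$ rather than fixing it: one first forms the pushout log structure $\cM'_C=\ul f^*\cM_{X_0}\oplus^{\mathrm{fine}}_{\ul\pi^*\ul g^*\cM_B}\pi^*\cM_{O^\dagger}$, and the factor $|G|$ then appears as the choice of how to kill the torsion in $(\ol\cM'_C)^\gp$ (this is where $\Gamma(\cF)$ enters), giving a quotient $\cM''_C$; after saturation, the factor $\ol w_q=\ell(u_q)$ at each constrained node arises as the number of extensions of $\cM''_C$ to a log-smooth $\cM_C$ (via an index computation showing $\mathrm{coker}\big(P_q^\gp\oplus\ZZ\to\ol\cM_{C,q}^\gp\big)\simeq\ZZ/\ell(u_q)$). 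Your framing---fix $\cM_C$, then count $f^\flat$---is a valid reparametrization of the same count, and your identification of the node analysis as the crux is exactly right.

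Two small points. First, a notational slip: the roots $\sigma_\eta$ defining $\cF_\eta$ live in $\cM_\eta^{\mathrm{Zar}}=(\ul f^*\cM_{X_0})_\eta$, not in $\cM_{C,\eta}$; the bijection with choices of $f^\flat_\eta$ goes through the compatibility $f^\flat_\eta(\rho_\eta)=\tau_\eta^b$. Second, the freeness of the $\ZZ/b$-action, which you assert, does require an argument: the paper shows it by checking that a $b$-th root of unity $\zeta$ fixing the enhancement must satisfy $\zeta^{e_q}=1$ for every constrained node and $\zeta^{b/\mu_\eta}=1$ for every $\eta$, and these exponents have $\gcd$ equal to $1$ by the very definition of the base order $b$.
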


\begin{proof}
\textsc{Counting rigidified objects.}
We are going to count diagrams of the form
\begin{equation}
\label{Eqn: counting diagram}
\begin{CD}
C=(\ul C,\cM_C)@>{f}>>X_0=(\ul X_0,\cM_{X_0})\\
@V{\pi}VV @VV{p}V\\
\Spec(Q\arr \kk) @>>{g}> B=(\ul B,\cM_B),
\end{CD}
\end{equation}
with $p$ and $\ulf$ given by assumption and $g$ determined by $b$ as in
Theorem~\ref{Thm: BasicEnhancementsNecCond},(3) uniquely up to isomorphism. For the final
count we will divide out the $\ZZ/b$-action coming from the automorphisms
of $\Spec(Q\to\kk)\arr B$.

\textsc{Simplifying the base.} By Theorem~\ref{Thm: BasicEnhancementsNecCond} we have $Q=\NN\oplus\bigoplus_{\text{free
nodes}}\NN$ and the map $\NN=\ocM_{B,b_0}\arr Q$ is the inclusion of the first
factor multiplied by the base order $b$. Pulling back by any fixed sharp map
$Q\to\NN$ replaces the lower left corner by the standard log point
$O^\dagger=\Spec(\NN\arr \kk)$. To be explicit, we take $Q\to\NN$ to restrict to
the identity on each summand. Since this map $Q \arr \NN$ is surjective we do not introduce automorphisms or ramification. The universal property of basic objects guarantees that the number of liftings is not changed.

The composition $\ol\cM_{B,b_0}\arr Q\arr \NN$ is
then multiplication by $b$. We have now arrived at a counting problem over a
standard log point. Note also that the given data already determines \eqref{Eqn:
counting diagram} at the level of ghost monoids, that is, the data determines the sheaf $\ol\cM_C$, and maps $\ol
f^\flat: \ol\cM= \ul f^*\ol\cM_{X_0}\arr \ol\cM_C$ and $\ol\pi^\flat:
\ol\cM_{O^\dagger}\arr \ol\cM_C$, uniquely.

\textsc{Pulling back the target monoid.}
Pull-back yields the two log structures $\cM=\ul f^*\cM_{X_0}$ and
$\pi^*\cM_{O^\dagger}$ on $\ul C$. Recall our choice of generator $\rho_0$ of
$\cM_{B, b_0}$ and its pull-back $\rho\in \Gamma(\ul C,\cM)$, introduced earlier in \S\ref{Par: Torically transverse counts}. For later use let
also $\tau_0\in \cM_{O^\dagger,0}$ be a generator with $g^\flat(\rho_0)=
\tau_0^b$. Then for any log smooth structure $\cM_C$ on $\ul C$ over $O^\dagger$
we have a distinguished section $\tau=\pi^\flat(\tau_0)$.

\textsc{Simplifying the target monoid.}
Now define $\cM'_C$ as the fine monoid sheaf given by pushout of these two
monoid sheaves over $\ul\pi^*\ul g^*\cM_B$:
\[
\cM'_C= \ul f^*\cM_{X_0}\oplus^{\text{fine}}_{\ul\pi^* \ul g^*\cM_B} \pi^*\cM_{O^\dagger}.
\]
Since $\ocM_{B,b_0} = \NN$, by \cite[(4.4)(ii)]{KKato} $X_0\arr B$ is an integral morphism.
Hence the pushout $\cM'_C$ in the category
of fine monoid sheaves agrees with the ordinary pushout. In particular, the structure
morphisms of $X_0$ and $O^\dagger$ define a structure morphism $\alpha'_C: \cM'_C\to
\cO_C$. 

\textsc{Restating the counting problem.} Classifying diagrams~\eqref{Eqn: counting diagram} amounts to finding an
fs log structure $\cM_C$ on $\ul C$ together with a morphism of monoid sheaves
\[
\phi:\cM'_C\longrightarrow \cM_C
\]
compatible with $f^\sharp$ and such that the composition $\pi^*\cM_{O^\dagger}\arr \cM_C$  of $\phi$ with
$\pi^*\cM_{O^\dagger}\arr \cM'_C$ is log smooth. 

We will soon see that $\bar\phi:(\ocM'_C)^\gp\longrightarrow \ocM_C^\gp$
necessarily decomposes into the quotient by some finite torsion part and the
inclusion of a finite index subgroup. 

Lifting the quotient morphism to $\cM'_C$ leads to the factor $|G|$, while the
finite extension of the resulting log structure to $\cM_C$ receives a
contribution by the reduced branching order $\ol w_q$ from each constrained
node. Note that $\ol w_q = \ell(u_q)$ by Proposition~\ref{Prop: type of
transverse stable log map},(I)(4); it is in this form that it appears in the
proof.

\textsc{The ghost kernel.} To understand the torsion part to be divided out, note
that $(\ol{\cM'_{C,x}})^\gp$ at $x\in\ul C$ equals $P_x^\gp\oplus_\ZZ \ZZ$ with
$1\in\ZZ$ mapping to $\ol\rho_x\in P_x^\gp$ and to the base order $b\in\ZZ$,
respectively. Since $\ell(\ol\rho_x)$ divides the multiplicities of some
irreducible components of $\ul X_0$, Theorem~\ref{Thm:
BasicEnhancementsNecCond},(3) implies $b/\ell(\ol\rho_x)\in\NN$. If $\ol\rho_x$
has integral length $\ell(\ol\rho_x)>1$, then $\big(\ol\rho_x/
\ell(\ol\rho_x),-b/ \ell(\ol\rho_x)\big)$ is a generator of the torsion subgroup
$((\ol{\cM'_{C,x}})^\gp)_\tor$, which has order $\ell(\ol\rho_x)$. This element
has to be in the kernel of the map to the torsion-free monoid $\ol\cM_C$. 

\textsc{The $|G|$ embodiments of the ghost image.} The interesting
fact is that the lift of $\big(\ol\rho_x/\ell(\ol\rho_x),-b/
\ell(\ol\rho_x)\big)$ to $(\cM'_{C,x})^\gp$ is only unique up to an
$\ell(\ol\rho_x)$-torsion element in $\cO_{\ul C,x}^\times$, that is, up to an
$\ell(\ol\rho_x)$-th root of unity $\zeta_x\in\kk^\times$. Explicitly, the lift
is equivalent to a choice $\sigma_x\in\cM_x$ with
$\sigma_x^{\ell(\ol\rho_x)}=\rho_x$ by taking the torsion subsheaf in
$(\cM'_C)^\gp$ generated by $(\sigma_x,\tau_x^{-b/\ell(\ol\rho_x)}\big)$. The
quotient by this subsheaf means that we upgrade the relation
$f^\flat(\rho_x)=\tau_x^b$ coming from the commutativity of \eqref{Eqn: counting
diagram} to $f^\flat(\sigma_x)= \tau_x^{b/\ell(\ol\rho_x)}$. 

To define this
quotient of the monoid $\cM'_C$ globally amounts to choosing the roots $\sigma_x$ of
$\rho_x$ compatibly with the generization maps, leading to a global section of
the sheaf $\cF$ introduced directly before the statement of the theorem. For
this statement note that for $x=\eta$ a generic point, $\ell(\ol\rho_\eta)$
equals the multiplicity $\mu_\eta$ of the irreducible component of $\ul X_0$
containing $\ul f(\eta)$. 

\textsc{The quotient is a logarithmic structure.} Assume now
$\sigma\in\Gamma(\cF)$ has been chosen and denote by $\cM''_C$ the quotient of
$\cM'_C$ by the corresponding torsion subgroup of $(\cM'_C)^\gp$. Since
$\alpha'_C(\sigma_x)=\alpha'_C(\tau_x)=0$, the homomorphism $\alpha'_C$ descends
to the quotient, thus defining a structure homomorphism $\alpha''_C: \cM''_C\to
\cO_C$.

\textsc{The log structure $\cM_C$ is determined at smooth points.} Note
that the map $(\pi^*\cM_{O^\dagger})_\eta\to\cM''_{C,\eta}$ is an
isomorphism and hence we must have $\cM_{C,\eta}=\cM''_{C,\eta}$. The log
structure $\cM_C$ is then also defined at each marked point $p\in\ul C$ by
adding a generator of the maximal ideal in $\cO_{\ul C,p}$ as an additional
generator to $\cM_{C,p}$. It is also clear that $\cM''_{C,p}\arr \cM_{C,p}$
exists and is determined by the corresponding map at $\eta$ and by $f^\sharp$.

\textsc{The log structure $\cM_C$ is determined at free nodes.} At a free node $q$
we have $\ul f(q)\in (X_0)_\reg$ and hence there is a unique specialization
$\sigma_q\in \cM_q$ of $\sigma_\eta$ for the two generic points $\eta\in \ul C$
with $q\in\cl(\eta)$. The log structure $\cM_C$ on $C$ is then determined by
$f^\flat_\eta$ and by the universal log structure $\cM_C^\circ$ of $\ul C$ as
follows. Let $x,y\in \cO_{C,q}$ be coordinates of the two branches of $\ul C$ at
$q$ in the \'etale topology. Then there exist unique lifts
$s_x,s_y\in\cM^\circ_{C,q}$ such that $s_x\cdot s_y$ is the pull-back of a
generator $\epsilon_q$ of the $q$-th factor in the universal base log structure
$\Spec \big(\bigoplus_{\text{nodes of \ul C}} \NN\arr \kk\big)$. Our choice of
pull-back $O^\dagger\arr \Spec\big(Q\arr \kk)$ turns $\epsilon_q$ into
$\lambda\tau_0^{e_q}$ for some $\lambda\in\kk^\times$ and $e_q\in\NN$ determined
by basicness as in \eqref{Eqn: e_q}. Thus $\cM_{C,q}$ is generated by $s_x,s_y$
and $\tau_q$ with single relation $s_x\cdot s_y=\lambda\tau_q^{e_q}$ and mapping
to $x,y$ and $0$ under the structure homomorphism, respectively. The morphism
$f^\flat:\cM_q\arr \cM_{C,q}$ factors over $\pi^*\cM_{O^\dagger}$ and is
therefore completely determined by $f^\flat_\eta(\sigma_q)=\tau_q^{b/\mu_\eta}$.

\textsc{Constrained nodes: Study of the image log structure $\cM''_C$.}
It remains to extend $\cM''_C$ to the correct log structure at each constrained
node $q\in\ul C$. On the level of ghost sheaves we have the following situation,
where we include the above description of the kernel for completeness.

\begin{proposition}
\label{Prop: Index of the image}
The homomorphism of abelian groups
\[
P_q^\gp\oplus\ZZ\longrightarrow \ol\cM_{C,q}^\gp,\quad
(m,k)\longmapsto \ol f^\flat_q(m)+ k\cdot\ol\tau_q,
\]
has kernel generated by $\big(\ol\rho_q/\ell(\ol\rho_q),-b/\ell(\ol\rho_q)\big)$ and
cokernel a cyclic {group} of order $\ell(u_q)$.
\end{proposition}

\begin{proof}
The kernel is described in the discussion above. Indeed, if $(m,k)\in
P_q^\gp\oplus\ZZ$ lies in the kernel then $\ol f^\flat_q(m)\in
\ZZ\cdot\ol\tau_q$. Because $\ol f^\flat_q$ is injective and $\ol
f^\flat_q(\ol\rho_q)=\ol\tau_q^b$ we conclude that $(m,k)$ is proportional to
$(\ol\rho_q, -b)$. The stated element is a primitive element of this
one-dimensional subspace.

For the determination of the cokernel observe that the composition
\[
P_q^\gp\stackrel{\ol f^\flat_q}{\longrightarrow} \ol\cM_{C,q}^\gp
\longrightarrow \ol\cM_{C,q}^\gp/\ZZ\ol\tau_q\simeq \ZZ
\]
equals $u_q$ up to sign. Indeed, the quotient by $\ZZ\ol\tau_q$ maps the two
generators of extremal rays of $\ol\cM_{C,q}$ to $\pm 1\in\ZZ$. Hence $m_i\in
P_q^\gp$ maps to $\pm w_i$, which by Proposition~\ref{Prop: type of transverse
stable log map}, I,(2) agrees with $\pm u_q(m_i)$. The order of the cokernel now
agrees with the greatest common divisor of the components of $u_q$, that is,
with $\ell(u_q)$.
\end{proof}

Once again we follow \cite[\S1.3]{GS} and denote by $S_e\subset \ZZ^2$ the submonoid generated by $(e,0),(1,1),(0,e)$, for $e\in\NN\setminus\{0\}$. Up to a choice of ordering of extremal rays there is a canonical isomorphism
\begin{equation}
\label{Eqn: ocM_q}
\ol\cM_{C,q} \stackrel{\simeq}{\longrightarrow} S_{e_q}.
\end{equation}
Using Proposition~\ref{Prop: Index of the image} we can now determine the saturation of $\ol\cM''_{C,q}$. For readability we write $\ell=\ell(u_q)$.

\begin{corollary}
\label{Cor: Saturation of M''}
Using the description \eqref{Eqn: ocM_q}, the saturation of $\ol\cM''_{C,q}$ equals $S_{\ell(u_q)e_q}\subset S_{e_q}$.
\end{corollary} 

\begin{proof}
By construction of $\cM''_C$, the image of the homomorphism in Proposition~\ref{Prop: Index of the image} equals $(\ol\cM''_{C,q})^\gp$. In the notation of \eqref{Eqn: ocM_q},
the statement now follows from the fact that by the proposition, the image has index
$\ell(u_q)$ in $\ol\cM_{C,q}^\gp$ and $(e_q,0)\in S_{e_q}$. Hence $(\ell e_q,0)\in (\ol\cM''_{C,q})^\gp$, which together with $(1,1)\in (\ol\cM''_{C,q})^\gp$ generates $S_{\ell e_q}\subset S_{e_q}$.

The saturation is then computed by taking all integral points in the real cone
in $\ol\cM_{C,q}^\gp\otimes_\ZZ \RR$ spanned by $\ol\cM''_{C,q}$.
\end{proof}

\textsc{Constrained nodes: Extending the log structure.}

In this step we extend the log structure to the saturation of $\cM''_C$, described in Corollary~\ref{Cor: Saturation of M''}.

\begin{lemma}
\label{Lem: Extension to the saturation}
The log structure $\alpha'':\cM''_C\to\cO_C$ extends uniquely to the saturation $(\cM''_C)^{\mathrm{sat}}$.
\end{lemma}

\begin{proof}
We continue to write $\ell=\ell(u_q)$.
The saturation can at most be non-trivial at a constrained node $q$. By
Corollary~\ref{Cor: Saturation of M''} we have an isomorphism
$(\ol\cM''_{C,q})^{\mathrm{sat}}\simeq S_{\ell e_q}$. The definition of the
weights $w_i$ implies $(w_1 e_q,0),(0,w_2 e_q)\in \ol\cM''_{C,q}$, for the
appropriate ordering of the branches of $C$ at $q$. As a sanity check, notice
that $w_i$ divides $\ell$ by Proposition~\ref{Prop: type of transverse
stable log map},(I)(2). Let $\beta_q: \ol\cM''_{C,q}\arr \cO_{C,q}$ be
the composition of a choice of splitting $\ol\cM''_{C,q}\arr
\cM''_{C,q}$ and the structure morphism $\cM''_{C,q}\arr \cO_{C,q}$. 
Then $\beta_q\big((w_1 e_q,0)\big)$ vanishes at $q$ 
to
order $w_1$ on one branch of $C$ and $\beta_q\big((0,w_2 e_q)\big)$ vanishes to
order $w_2$ on the other branch. Thus, \'etale locally there exist generators
$x,y\in \cO_{C,q}$ for the maximal ideal at $q$ with
\[
\beta_q\big((w_1 e_q,0)\big)=x^{w_1},\quad
\beta_q\big((0,w_2 e_q)\big)=y^{w_2}.
\]
Thus any extension $\beta_q^{\mathrm{sat}}$ of $\beta_q$ to a chart for
$(\cM_{C,q}'')^{\mathrm{sat}}$ has to fulfill
\begin{equation}\label{Eqn: chart for saturation}
\beta_q^{\mathrm{sat}}\big((\ell e_q,0)\big)=\zeta_1\cdot x^{\ell},\quad
\beta_q^{\mathrm{sat}}\big((0,\ell e_q)\big)=\zeta_2\cdot y^{\ell},
\end{equation}
with $\zeta_i\in\kk$, $\zeta_i^{w_1/\ell}=1$. On the other hand, the $\zeta_i$
are uniquely determined by compatibility of $\beta_q^{\mathrm{sat}}$ with
$\beta$ at the generic points of the two branches of $C$ at $q$ since $(\ell
e_q,0),(0,\ell e_q)\in\ol\cM_{C,q}$. Conversely, with this choice of the
$\zeta_i$, the equations \eqref{Eqn: chart for saturation} provide the requested
extension of $\alpha''$.
\end{proof}

Finally, we extend $\cM''_C$ to a log structure of a log smooth curve over the
standard log point. The situation is largely the same as with admissible covers,
{see, e.g., \cite[\S3]{Mochizuki}}.

\begin{lemma}
\label{lem: From cM'' to cM}
Up to isomorphism of log structures over the standard log point, there are
$\ell=\ell(u_q)$ pairwise non-isomorphic extensions $\alpha_q:\cM_{C,q}\to
\cO_{C,q}$ of the image log structure $\alpha''_q:\cM''_{C,q}\to\cO_{C,q}$ at
the constrained node $q$ to a log structure of a log smooth curve.
\end{lemma}

\begin{proof}
Let
\[
\beta_q^{\mathrm{sat}}: S_{\ell e_q}\longrightarrow \cO_{C,q}
\]
be a chart for the log structure $(\cM''_C)^{\mathrm{sat}}$ at $q$. The task is
to classify extensions to a chart $\tilde\beta_q: S_{e_q}\arr \cO_{C,q}$ up to
isomorphisms of induced log structures. Similar to the reasoning in
Lemma~\ref{Lem: Extension to the saturation}, in terms of coordinates
$x,y\in\cO_{C,q}$ with
\[
\beta_q^{\mathrm{sat}}\big((\ell e_q,0)\big)= x^{\ell},\quad
\beta_q^{\mathrm{sat}}\big((0,\ell e_q)\big)= y^{\ell},
\]
we have to define 
\begin{equation}
\label{Eqn: tilde beta}
\tilde\beta_q\big(( e_q,0)\big)= \zeta_1\cdot x,\quad
\tilde\beta_q\big((0, e_q)\big)= \zeta_2\cdot y,
\end{equation}
with $\zeta_i\in\kk$, $\zeta_i^\ell=1$. Dividing out isomorphisms amounts to
working modulo $\varphi\in\Hom(S_{e_q},\ZZ/\ell)$ with $\varphi\big(
(1,1)\big)=1$. In other words, we can change $\zeta_1,\zeta_2$ by
$\zeta\zeta_1,\zeta^{-1}\zeta_2$ for any $\ell$-th root of unity $\zeta$. This
leaves us with $\ell$ pairwise non-isomorphic extensions of the log structure at
$q$.
\end{proof}

\textsc{Counting non-rigidified lifts.} For the final count we need to divide out
the action of $\ZZ/b$ by composition with automorphisms of $\Spec(Q\to\kk)$ over
$B$. The stated count follows once we prove that this action is free. The action
changes $\tau_0$ to $\zeta\cdot\tau_0$ for $\zeta$ a $b$-th root of unity. For
this change to lead to an isomorphic log structure $\cM_C$ requires
$\zeta_1\zeta_2\in\kk^\times$ in \eqref{Eqn: tilde beta} to be unchanged at any
constrained node $q\in\ul C$. This shows $\zeta^{e_q}=1$ for all $q$. Similarly,
for the map $\cM_{X_0,\ulf(\eta)}\arr \cM_{C,\eta}$ to stay unchanged relative
$\cM_{B,b_0}\arr \cM_{O^\dagger,0}$ requires $f^\flat_\eta(\sigma_\eta)
=\tau_\eta^{b/\mu_\eta}$ to stay unchanged. Thus also $\zeta^{b/\mu_\eta}=1$ for
all generic points $\eta\in \ul C$. But by Theorem~\ref{Thm:
BasicEnhancementsNecCond} the base order $b$ is the smallest natural number with
all $e_q= b\cdot \ind_q/\mu_2 w_1$ and all $b/\mu_\eta$ integers. Thus the $e_q$
and $b/\mu_\eta$ have no common factor. This shows that $\zeta^{e_q}=1$ and
$\zeta^{b/\mu_\eta}=1$ for all $q, \eta$ implies $\zeta=1$. We conclude that the
action of $\ZZ/b$ is free as claimed.
\end{proof}

\begin{remark}
The obstruction to the existence of a logarithmic enhancement in Theorem
\ref{curveconstruction} can be interpreted geometrically as follows. 

Let $\bar \mu$ be a positive integer and $\ul{\tilde{B}}\arr {\ul B}$ be
the degree $d$ cyclic cover branched with ramification index $d$
over $b_0$. Let $\ul{\bar{X}}=\ul{X}\times_{\ul B}\ul{\tilde{B}}$, and let
$\ul{\tilde{X}}\arr \ul{\bar{X}}$ be the normalization, giving a family
$\ul{\tilde{X}}\arr \ul{\tilde{B}}$. It is a standard computation that
the inverse image of a multiplicity $\mu$ irreducible component of $\ul{X}_0$ in
$\ul{\tilde X}$ is a union of irreducible components of $\ul{\tilde X}_0$, each
with multiplicity $\mu/\gcd(\mu,d)$.

At the level of log schemes, in fact $\bar{X}$ carries a fine but not saturated
logarithmic structure via the description $\bar{X}=X\times_B \tilde{B}$ in the
category of fine log schemes, while $\tilde X$ carries an fs logarithmic
structure via the description $\tilde{X}=X\times_B\tilde{B}$ in the category of
fs logarithmic structures. Here $\tilde{B}$ carries the divisorial logarithmic
structure given by $\tilde b_0\in \tilde B$, the unique point mapping to $b_0$.

Similarly, the central fibres are related as follows. The map $\tilde B
\arr B$ induces a morphism on standard log points $\tilde b_0\arr
b_0$ induced by $\NN\arr \NN, 1\mapsto d$ for some integer $d$. Then $\bar X_0
=X_0\times_{b_0}\tilde b_0$ in the category of fine log schemes, and $\tilde
X_0=X_0\times_{b_0}\tilde b_0$ in the category of fs log schemes.

Given a transverse pre-logarithmic map $\ul{f}:\ul{C}/\Spec\kk\arr
\ul{X}$, take the integer $d$ above to be the positive integer $b$ given by Theorem~\ref{Thm:
BasicEnhancementsNecCond},(3). Then one checks readily that $\ul{f}$ has a
logarithmic enhancement if and only if there is a lift
$\ul{\tilde{f}}:\ul{C}\arr \ul{\tilde{X}}_0$ of $\ul{f}$. Indeed, if
$\ul{f}$ has a logarithmic enhancement $f:C/S\arr X_0$ with $S$ carrying
the basic log structure, the morphism $S\arr b_0$ factors through $\tilde
b_0$ by the description of Theorem~\ref{Thm: BasicEnhancementsNecCond}. Thus the
universal property of fibred product gives a morphism $\tilde f:C\arr
\tilde X_0$. Conversely, given a lift, it follows again from the definition of
$b$ in Theorem~\ref{Thm: BasicEnhancementsNecCond} that the multiplicity $\mu$
of any irreducible component of $\ul{X}_0$ meeting $\ul{f}(\ul{C})$ divides $b$.
So the multiplicity of any component of $\ul{\tilde{X}}_0$ meeting
$\ul{\tilde{f}}(\ul{C})$ is $1$ and by shrinking $\ul{X}_0$ we can assume that
$\ul{\tilde{X}}_0$ is reduced. One also checks that the reduced branching order
$\overline{w}_q$ associated to a node $q$ is the same for $\ul{f}$ and
$\ul{\tilde{f}}$, and thus $\ul{\tilde{f}}$ is still transverse pre-logarithmic.
By Lemma~\ref{Lem: G-action} and Theorem~\ref{curveconstruction},
$\ul{\tilde{f}}$ has a logarithmic enhancement, and then the composed morphism
$C\mapright{\tilde f}\tilde{X}_0 \mapright {} X_0$ gives the desired logarithmic
enhancement of $\ul{f}$.
\end{remark}

%=================================================================================
\section{Examples}
\label{examplessection}

We will now study explicit examples of the decomposition formula for a
logarithmically smooth morphism $X\to B$. We mostly use the traditional tropical
language of polyhedral complexes and metric graphs discussed in \S\ref{Subsect:
Traditional tropical geometry}.

%---------------------------------------------------------------------------------
\subsection{The classical case}
\label{sec:JunLi}

Suppose $X\arr B$ is a simple normal crossings degeneration with $X_0=Y_1\cup
Y_2$ a reduced union of two irreducible components, with $Y_1\cap Y_2=D$ a
smooth divisor in both $Y_1$ and $Y_2$. In this case, $\Sigma(X)=(\RR_{\ge
0})^2$ and the map $\Sigma(X)\arr\Sigma(B)=\RR_{\ge 0}$ is given by
$(x,y)\mapsto x+y$. Thus $\Delta(X)$ admits an affine-linear isomorphism with
the unit interval $[0,1]$, see Figure~\ref{Fig:DeltaX}.

\begin{figure}[htb]
\input{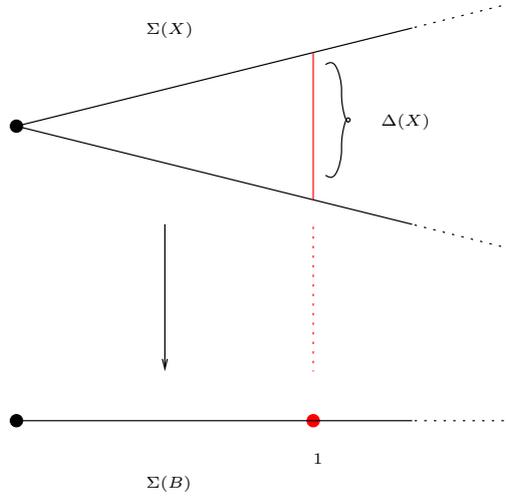}
\caption{The cones $\Sigma(X)$ and $\Sigma(B)$ and the interval $\Delta(X)$}
\label{Fig:DeltaX}
\end{figure}

\begin{proposition}
In the above situation, let $f:\Gamma\arr\Delta(X)$ be a decorated
tropical map. Then $f$ is rigid if and only if every vertex
$v$ of $\Gamma$ maps to the endpoints of $\Delta(X)$ and every 
edge of $\Gamma$ surjects onto $\Delta(X)$.
\end{proposition}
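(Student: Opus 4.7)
The strategy will be to parametrize $M^{\trop}_{\tilde\tau,\bA}(\Delta(X))$ explicitly and count its dimension, noting that rigidity is equivalent to this dimension being $0$. In our setting $\Sigma(X)=\RR_{\geq 0}^2$ and the contraction $\rho:\Sigma(X)\to\Sigma(B)=\RR_{\geq 0}$ is $(x,y)\mapsto x+y$, so $\Delta(X)=\rho^{-1}(1)$ is the closed segment joining $v_1=(1,0)$ and $v_2=(0,1)$. The open interior of $\Delta(X)$ corresponds to the two-dimensional cone $\sigma_D\in\Sigma(X)$ indexing the double locus $D$, while each endpoint $v_i$ corresponds to the ray $\sigma_{\eta_i}$ indexing the generic stratum of $Y_i$.

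The first step will be to describe the free parameters of the moduli space. A point of $M^{\trop}_{\tilde\tau,\bA}(\Delta(X))$ is given by positions $f(v)\in\Int(\bsigma(v))\cap\Delta(X)$ and lengths $\ell(E_q)>0$, subject to the affine-linearity $f(v_2)-f(v_1)=\ell(E_q)u_q$ from Definition~\ref{tropcurveinsigmaxdef}(6)(b)(iii). Thus a vertex with $\bsigma(v)=\sigma_{\eta_i}$ is forced onto the endpoint $v_i$, while a vertex with $\bsigma(v)=\sigma_D$ ranges freely in the one-dimensional interior of $\Delta(X)$; similarly a bounded edge with $u_q\neq 0$ has its length determined by the positions of its endpoints, while one with $u_q=0$ contributes a free positive length. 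I expect each such vertex and each such edge to contribute one independent parameter, so that the dimension of $M^{\trop}_{\tilde\tau,\bA}(\Delta(X))$ at any point will equal the number of vertices with $\bsigma(v)=\sigma_D$ plus the number of bounded edges with $u_q=0$.

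The equivalence will then follow: $f$ is rigid exactly when this dimension vanishes, i.e.\ every vertex has $\bsigma(v)\in\{\sigma_{\eta_1},\sigma_{\eta_2}\}$ and every bounded edge has $u_q\neq 0$. To match the phrasing of the proposition, I would finally observe that a bounded edge with $u_q\neq 0$ whose endpoints lie among $\{v_1,v_2\}$ cannot have its endpoints coincide (otherwise $\ell(E_q)u_q=0$ with $\ell(E_q)>0$ and $u_q\neq 0$), so its two endpoints must be $v_1$ and $v_2$ and its affine-linear image covers all of $\Delta(X)$; conversely any edge surjecting onto $\Delta(X)$ clearly has $u_q\neq 0$. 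The main technical point to verify carefully will be the independence of the deformations --- that sliding a vertex inside $\Int(\Delta(X))$ can always be accompanied by compatible adjustments of incident edge lengths while keeping all lengths positive --- but this is local in nature since each constraint involves only one edge and its two endpoints, so positivity holds in a sufficiently small neighborhood.
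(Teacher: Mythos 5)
Your dimension-count approach is a legitimate variant of the paper's argument, which instead exhibits an explicit one-parameter deformation whenever rigidity fails. However, your stated dimension formula is not correct. You claim that the dimension of $M^{\trop}_{\ttau,\bA}(\Delta(X))$ equals the number of vertices with $\bsigma(v)=\sigma_D$ plus the number of bounded edges with $u_q=0$, the two contributions being independent. This overcounts: when a contracted bounded edge $E_q$ (one with $u_q=0$) joins two distinct interior vertices $v_1,v_2$, the defining equation $f(v_2)-f(v_1)=\ell(E_q)u_q=0$ forces $f(v_1)=f(v_2)$, so the two position coordinates are not independent and your count gives $2+1=3$ where the true dimension is $2$. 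Your final paragraph, which promises to verify ``independence of the deformations,'' in fact addresses a different point --- that positivity of lengths persists under small perturbations --- and tacitly assumes no contracted edge is incident to the vertex being slid.

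The conclusion is nonetheless correct, because all you really need is the equivalence ``dimension $=0$ iff your count $=0$,'' and this holds without the exact formula. One direction is trivial, since your count bounds the dimension from above. For the other, separate into the same two cases the paper implicitly uses: if some bounded edge is contracted, its length is unconstrained and varies freely; if no bounded edge is contracted but some vertex is interior, slide that vertex and adjust the lengths of incident edges, which is now unobstructed precisely because those edges all have $u_q\neq 0$. Ordering the cases this way sidesteps the dependence noted above, so the fix is small. Your last step --- translating ``$u_q\neq 0$ with endpoints at the two endpoints of $\Delta(X)$'' into ``edge surjects onto $\Delta(X)$'' --- is correct and usefully makes explicit what the paper leaves implicit.
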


Note that necessarily
every leg of $\Gamma$ is contracted, as $\Delta(X)$ is compact.

\begin{proof}
First note that if an edge $E_q$ is contracted, then $u_q=0$ and
the length of the edge is arbitrary. By changing the length, one sees
$f$ is not rigid, see Figure~\ref{Fig:not-rigid} on the left.

Next, suppose $v$ is a vertex with $f(v)$ lying in the interior of $\Delta(X)$.
Identifying the latter with $[0,1]$, we can view $u_q\in \ZZ$ for any $q$. Let
$E_{q_1},\ldots,E_{q_r}$ be the edges of $\Gamma$ adjacent to $v$ with lengths
$\ell_1,\ldots,\ell_r$, oriented to point away from $v$. We can then write down
a family $f_t$ of tropical maps, $t$ a real number close to $0$, with $f=f_0$,
$f_t(v')=f(v')$ for any vertex $v'\not=v$, and $f_t(v)=f(v)+t$. In doing so, we
also need to modify the lengths of the edges $E_{q_i}$, as indicated in
Figure~\ref{Fig:not-rigid} on the right. Any unbounded edge attached to $v$ is
contracted to $f_t(v)$. So $f$ is not rigid. Thus if $f$ is rigid, we see that
all vertices of $\Gamma$ map to endpoints of $\Delta(X)$, and any compact edge
is not contracted, hence surjects onto $\Delta(X)$. The converse is clear.

\begin{figure}[htb]
\input{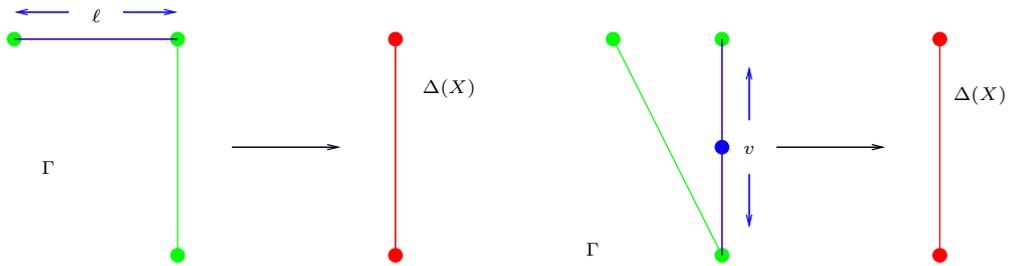}
\caption{A graph with a contracted bounded edge or an interior vertex is not rigid.}
\label{Fig:not-rigid}
\end{figure}
\end{proof}

A choice of decorated rigid tropical map in this situation is then exactly
what Jun Li terms an \emph{admissible triple} in \cite{Jun2}. Indeed, by
removing $f^{-1}(1/2)$ from $\Gamma$, one obtains two graphs (possibly
disconnected) $\Gamma_1, \Gamma_2$ with legs and what {Jun Li} terms
\emph{roots} (the half-edges mapping non-trivially to $\Delta(X)$). The weights
of a root, in Li's terminology, coincide with the absolute value of the
corresponding $u_q$. The set $I$ in the definition of admissible triple
indicates which labels occur for unbounded edges mapping to, say, $0\in
\Delta(X)$. An illustration is given in Figure~\ref{junlicase2}.

\begin{figure}[htb]
\input{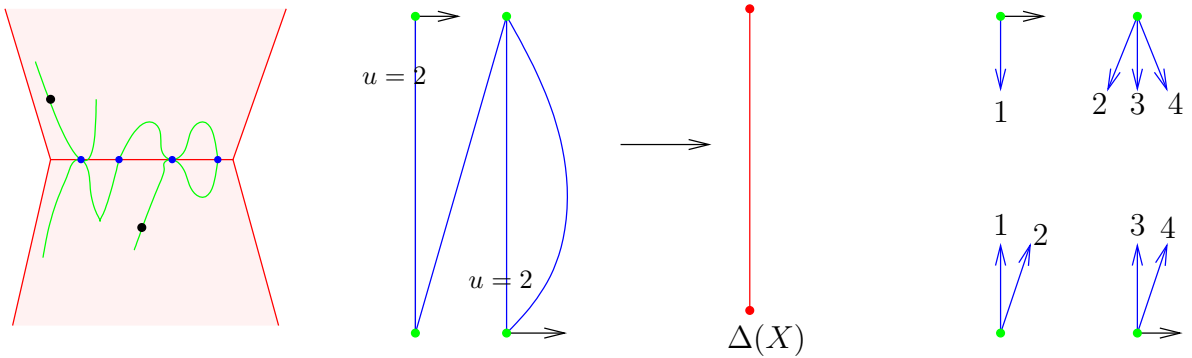}
\caption{A rigid tropical map is depicted with four edges and
two legs, the latter corresponding to marked points with contact order
$0$. The corresponding admissible triple of Jun Li is depicted on the
right, with roots corresponding to half-edges and legs corresponding
to the legs of the original graph. The half-edges marked $1$ and $3$ have $u=2$.  }
\label{junlicase2}
\end{figure}

We emphasize that our virtual decomposition of the moduli space of stable
logarithmic maps in terms of rigid tropical maps does not depend on
transversality. Already in this simple situation, the tropicalization of a basic
stable logarithmic map parameterizes a family of tropical maps with several
rigid limits, one for each facet of the basic monoid. The main result of this
paper refines the virtual counting problem in providing a count for each such
choice of rigid limit. This count applies even in more general situations where
the vertices of the tropical curve do not necessarily map to vertices of the
polyhedron associated to the target, as the next section shows. Note also that
this case has been carried out in detail and with somewhat different notation
after distribution of a first version of this paper in~\cite{KimLhoRuddat}.

%---------------------------------------------------------------------------------
\subsection{Rational curves in a pencil of cubics}
\label{sec:Cubic}
It is well-known that if one fixes $8$ general points in $\PP^2$, the pencil of
cubics passing through these $8$ points contains precisely $12$ nodal rational
curves. Blowing up 6 of these 8 points, we get a cubic surface we denote $X_1'
\subset \PP^3$, and the enumeration of $12$ nodal rational cubics translates to
the enumeration of $12$ nodal plane sections of $X_1'$ passing through the
remaining two points $p_1,p_2$.

We will give here a non-trivial demonstration of the decomposition formula by
degenerating the cubic surface to a normal crossings union {$H_1\cup H_2\cup
H_3$} of three blown-up planes.

%---------------------------------------------------------------------------------
\subsubsection{Degenerating a cubic to three planes} 

Using  coordinates
$x_0,\ldots,x_3$ on $\PP^3$, 
consider a smooth cubic surface $X_1'\subset \PP^3$ with equation
$$f_3(x_0,x_1,x_2,x_3)+x_1x_2x_3=0.$$ 
We then have a family $X'\arr B=\AA^1$ given by $X'\subset \AA^1\times
\PP^3$ defined by $tf_3+x_1x_2x_3=0$. The fibre $X'_0$ is the union of three planes $H'_1\cup
H'_2\cup H'_3$. Pick two sections $p_1,p_2:B\arr X'$ such that $p_i(0)\in
H'_i$. This can be achieved by choosing two appropriate points on the base locus $f_3(x_0,x_1,x_2,x_3)=x_1x_2x_3=0$. 

%---------------------------------------------------------------------------------
\subsubsection{Resolving to obtain a normal crossings family} 
The total space of $X'$ {is not a normal crossings family: it has} $9$ ordinary
double points over $t=0$, assuming $f_3$ is chosen generally: these are the
points of intersection of the singular lines $H_i'\cap H_j'$ with $f_3=0$. {One
manifestation is the fact that $H_i'$ are Weil divisors which are not Cartier.}
By blowing up $H'_1$ followed by $H'_2$, we resolve the ordinary double points.
We obtain a family $X\arr B$, which is normal crossings, hence logarithmically
smooth, in a neighbourhood of $t=0$, as depicted on the left in
Figure~\ref{cubiccentralfibre}. Denote by $H_i$ the proper transform of $H_i'$.

We identify $\Sigma(X)$  with $(\RR_{\ge 0})^3$, so that $\Delta(X)$
is identified with the standard simplex $\{(x_1,x_2,x_3)\,|\,x_1+x_2+x_3=1,
x_1,x_2,x_3\ge 0\}$, as depicted on the right in Figure~\ref{cubiccentralfibre}.

\begin{figure}[htb]
\input{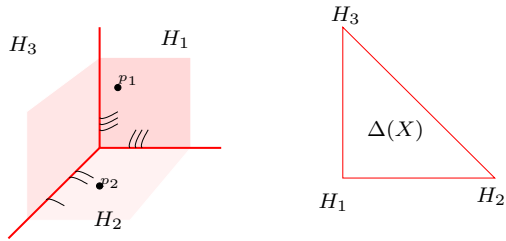}
\caption{The left-hand picture depicts $X_0$ as a union of three copies
of $\PP^2$, blown up at $6$, $3$ or $0$ points. The right-hand picture
depicts $\Delta(X)$.}
\label{cubiccentralfibre}
\end{figure}

%---------------------------------------------------------------------------------
\subsubsection{Limiting curves: triangles}
Since the limit of plane curves on $X'_t = X_t$ should be a plane curve on
$X'_0$, limiting curves on $X_0$ would map to plane sections of $X_0'$ through
$p_1,p_2$. This greatly limits the possible limiting curves --- in particular
the image in each of $H_i'$ is a line.

{\emph{General triangles do not occur.}}
It is easy to see that a plane section of $X'_0$ passing through $p_1, p_2$
whose proper transform in $X_0$ is a triangle of lines cannot be the image of a
stable logarithmic curve $C\arr X_0$ of genus zero. Indeed, there would be a
smooth point of $\ul C$ mapping to $(X_0)_\sing$, contradicting
Proposition~\ref{Prop: type of transverse stable log map},(II).

\emph{Triangles through double points.}
On the other hand, consider the total transform of a triangle in $X_0'$ passing
through $p_1$, $p_2$, and one of the $9$ ordinary double points of $X'$. 
The resulting curve will be a cycle of $4$ rational curves, one of the curves
being part of the exceptional set of the blowup of $H_1'$ and $H_2'$. We can
partially normalize this curve at the node contained in the smooth part of
$X_0$, getting a stable logarithmic curve of genus $0$. See
Figure~\ref{cubiccase1} for one such case.

\emph{Tropical picture.} We depict to the right the associated rigid tropical
curve. Here the lengths of each edge are $1$, and the contact data $u_q$ take
the values $(-1,1,0)$, $(0,-1,1)$ and $(1,0,-1)$. This accounts for $9$ curves.

\begin{figure}[htb]
\input{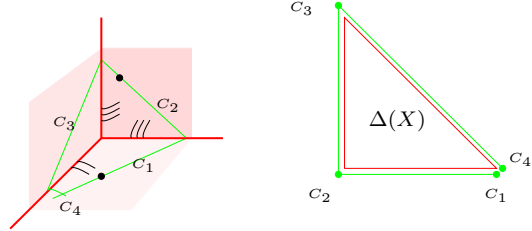}
\caption{Proper transform of a triangle through a double point. The curve is normalized where $C_1$ and $C_4$ meet.}
\label{cubiccase1}
\end{figure}

\emph{Logarithmic enhancement and logarithmic unobstructedness.}
Note that the above curves are transverse pre-logarithmic curves, and by
Theorem~\ref{curveconstruction}, each of these curves has precisely one basic
logarithmic enhancement. Since the curve is immersed it has no automorphisms.
One can use a natural absolute, rather than relative, obstruction theory to
define the virtual fundamental class, which is governed by the logarithmic normal
bundle. In this case each curve is unobstructed: since it is transverse with
contact order 1, the logarithmic normal bundle coincides with the usual normal
bundle. The normal bundle restricts to $\cO_{\PP^1},\cO_{\PP^1}(1),\cO_{\PP^1}(1)$,
and $\cO_{\PP^1}(-1)$ on the respective four components $C_1,C_2,C_3$ and $C_4$,
hence it is non-special. We note that this does not account for the incidence
condition that the marked points land at $p_i$. This can be arranged, for
instance, using \eqref{Eq:point-condition} in \ref{Sec:VFC-points}.

It follows that indeed each of these nine curves contributes
precisely once to the desired Gromov-Witten invariant.

%---------------------------------------------------------------------------------
\subsubsection{Limiting curves: the plane section through the origin}
The far more interesting case is when the plane section of $X_0'$ passes
through the triple point. Then one has a stable map from a union of
four projective lines, with the central component contracted to the 
triple point, see Figure~\ref{cubiccase2} on the left.

\begin{figure}[htb]
\input{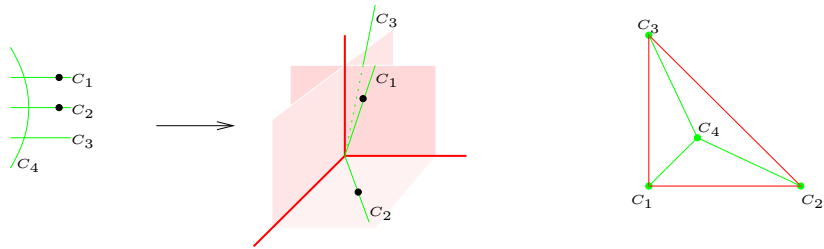}
\caption{A curve mapping to a plane section through the origin, and its tropicalization.}
\label{cubiccase2}
\end{figure}

There is in fact a one-parameter
family $\ul{W}$ of such stable maps, 
as the line in $H_3$ is unconstrained and can be chosen to be any element
in a pencil of lines. Only one member of this family lies in a plane, and we will see below that indeed only one member of the family admits a logarithmic enhancement.

\emph{Tropical picture.}
To understand the nature of such a logarithmic curve, we first analyze the
corresponding tropical map. The image of such a map will be as depicted in
Figure~\ref{cubiccase2} on the right, with the central vertex corresponding to
the contracted component landing somewhere in the interior of the triangle.
However, the tropical balancing condition must hold at this central vertex, by
\cite[Prop.~1.14]{GS}. From this one determines that the only possibility
for the values of $u_q$ are $(-2,1,1)$, $(1,1,-2)$ and $(1,-2,1)$, all lengths
are $1/3$, and the central vertex is $(1/3,1/3,1/3)$. The multiplicity
of this rigid tropical map $\Gamma$ acccording to Corollary~\ref{Cor:
decomposition of fM(cX_0,beta')} then is $m_{\Gamma}=3$.

%---------------------------------------------------------------------------------
\subsubsection{Logarithmic enhancement using a logarithmic modification}
We now show that only one of the stable maps in the family $\ul{S}$ has a
logarithmic enhancement. To do so, we use the techniques of
\S\ref{calculationalsection}, first refining $\Sigma(X)$ to obtain a logarithmic
modification of $X$. The subdivision visible in Figure~\ref{cubiccase2} gives a
refinement of $\Sigma(X)$, the central star subdivision of $\Sigma(X)$. This
corresponds to the ordinary blow-up $h:\tilde X\arr X$ at the triple
point of $X_0$. We may then identify logarithmic curves in $\tilde X$ and use
the induced morphism $\scrM(\tilde X/B) \arr \scrM(X/B)$.

\emph{Lifting the map to $\tilde X_0$.} The central fibre $\tilde X_0$ is now as
depicted in Figure~\ref{cubicblownup}. We then try to build a transverse
pre-logarithmic curve in $\tilde X$ lifting one of the stable maps of
Figure~\ref{cubiccase2}. Writing $C=C_1\cup C_2\cup C_3\cup C_4$, with $C_4$ the
central component, we map $C_1$ and $C_2$ to the lines $L_1$ and $L_2$
containing the preimages of $p_1$ and $p_2$, respectively, as depicted in
Figure~\ref{cubicblownup}, while $C_3$ maps to some line $L_3$ in $H_3$. On the
other hand, by \eqref{prelogconditions} in the definition of transverse
pre-logarithmic maps, $C_4$ must map to the exceptional $\PP^2=E$, which is of
multiplicity~$3$, in such a way that it is triply tangent to $\partial E$
precisely at the points of intersection with $L_i$, $i=1,2,3$. 

\emph{Uniqueness of liftable map.} We claim that there is precisely one such
map, necessarily with image containing a curve of degree $3$ in the exceptional
$\PP^2$, with image as depicted in Figure~\ref{cubicblownup}. The number of
transverse pre-logarithmic maps can be determined by considering linear series
as follows. The three contact points on $C_4 \simeq\PP^1$ can be taken to be
$0,1$ and $\infty$, and the map $C_4 \arr \PP^2$ corresponds, up to a choice of
basis, to the unique linear system on $\PP^1$ spanned by the divisors $3\{0\}$,
$3\{1\}$ and $3\{\infty\}$. Since these points map to the coordinate lines, the
choice of basis is limited to rescaling the defining sections. The choice of
scaling of the defining sections results in fixing the images of $0$ and $1$,
and the image point of $\infty$ is then uniquely determined.

\begin{figure}[htb]
\input{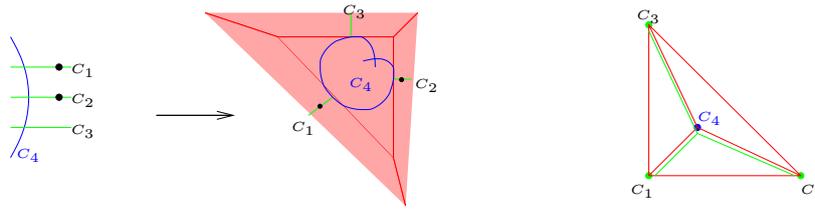}
\caption{The lifted map. The middle figure is only a sketch: the nodal cubic curve $C_4$ meets each of the visible coordinate lines at one point with multiplicity 3. Moreover, these three points are collinear.}
\label{cubicblownup}
\end{figure}

This determines uniquely the point $L_3\cap E$. In particular, the line $L_3$ is
determined. Thus we see that there is a unique transverse prelogarithmic map
$\ulf:C\arr\tilde \uX_0$ such that $\ul{h}\circ \ulf$ lies in the family
$\ul{S}$ of stable maps to $X$.

\emph{Logarithmic enhancement.} Since the curve is rational,
Theorem~\ref{curveconstruction} assures the existence of a logarithmic
enhancement. Only the exceptional component is non-reduced, of multiplicity
$\mu=3$ and for each node $q\in C$ we have $\ind_q=1$ and $\ol w_q=1/1=3/3=1$.
Hence $b=3$, $G=\ZZ/\mu=\ZZ/3$ and the count of Theorem~\ref{curveconstruction}
gives
\[
\frac{|G|}{b}\prod_q\ol w_q= \frac{3}{3}\cdot 1^3=1
\]
basic log enhancement of this transverse prelogarithmic curve. This gives one more
basic stable logarithmic map $h\circ f$. 

\emph{Unobstructedness.} Once again we check that $h\circ f$ is unobstructed,
{if one makes use of an absolute {obstruction} theory}: the logarithmic normal
bundle has degree 0 on each line, hence degree 1 on $C_4$, and is non-special.
Again the map has no automorphisms, which accounts for $1$ curve, with
multiplicity $3$, because $m_{\Gamma}=3$. Hence the final accounting
according to Theorem~\ref{Thm: Main} is
\[
9\,+ \,3\times 1 \ \ =\ \  12,
\]
which is the desired result.

%---------------------------------------------------------------------------------
\subsubsection{Impossibility of other contributions}
Note our presentation has not been thorough in ruling out other possibilities
for stable logarithmic maps, possibly obstructed, contributing to the total. For
example, $\ul{S}$ includes curves where $L_3$ falls into the double point locus
of $X_0$, but a more detailed analysis of the tropical possibilities rules out a
possible log enhancement. We leave it to the reader to confirm that we have
found all possibilities.

%---------------------------------------------------------------------------------
\subsection{Degeneration of point conditions}
\label{pointdegensection}

We now consider a situation which is common in applications of tropical
geometry; this includes tropical counting of curves on toric varieties
\cite{Mikhalkin05,NS06}. We fix a pair $(Y,D)$ where $Y$ is a variety over
a field $\kk$ and $D$ is a reduced Weil divisor such that the divisorial
logarithmic structure on $Y$ is logarithmically smooth over the trivial point
$\Spec\kk$. We then consider the trivial family
\[
X=Y\times\AA^1\arr \AA^1= B,
\]
where now $X$ is given the divisorial logarithmic structure with respect to the
divisor $(D\times B)\cup (Y\times \{0\})$.

%---------------------------------------------------------------------------------
\subsubsection{Evaluation maps and moduli}
Fix a type $\beta$ of stable logarithmic maps to $X$ over $B$, getting a moduli
space ${\mathscr M}(X/B,\beta)$. We assume that the curves of type $\beta$ have
$n$ marked points $p_1,\ldots,p_n$ with $u_{p_i}=0$ --- and possibly some
additional marked points $x_1,\ldots,x_m$ with non-trivial contact orders with
$D$. Given a stable map $(C/S,{\bf x},\bp,f)$, a priori for each
$i\in\{1,\ldots,n\}$ we have an evaluation map $\ev_i:(\ul S,p_i^*\cM_C)\arr X$
obtained by restricting $f$ to the section $p_i$. Noting that $u_{p_i}=0$, the
map $\ev_i^{\flat}:(f\circ p_i)^{-1}\overline{\cM}_X\arr
\overline{\cM}_S\oplus\NN$ factors through $\overline{\cM}_S$, and thus we have
a factorization $\ev_i:(\ul S,p_i^*\cM_C)\arr S \arr X$. In a slight abuse of
notation we write $\ev_i$ for the morphism $S\arr X$ also, and thus obtain a
morphism
\[
\ev:{\mathscr M}(X/B,\beta)\arr X^n:=
X\times_B X\times_B\times\cdots\times_B X.
\]
If we choose sections $\sigma_1,\ldots,\sigma_n:B\arr X$, we obtain a map
\[
\sigma:=\prod_{i=1}^n \sigma_i:B\arr X^n.
\]
This allows us to define \emph{the moduli space of curves passing through the given sections,}
\[
\scrM(X/B,\beta,\sigma):=\scrM(X/B,\beta)\times_{X^n} B,
\]
where the two maps are $\ev$ and $\sigma$.\footnote{Recall that all fibred
products are in the category of fs log schemes.}

%---------------------------------------------------------------------------------
\subsubsection{Virtual fundamental class on $\scrM(X/B,\beta,\sigma)$}
\label{Sec:VFC-points}
We note that the moduli space  $\scrM(X/B,\beta,\sigma)$ of curves passing through the given sections carries a virtual fundamental class.
The perfect obstruction theory is defined by
\begin{equation}
\label{Eq:point-condition}
\fE^{\bullet} = \big(R\pi_*[f^*\Theta_{X/B}\arr \bigoplus_{i=1}^n
(f^*\Theta_{X/B})|_{p_i(S)}]\big)^{\vee},
\end{equation}
for the stable map $(\pi:C\arr S, {\bf x}, \bp, f)$. Here the map of sheaves
above is just restriction. See \cite[\S4]{punctured} for a detailed discussion
of how to impose logarithmic point conditions on a virtual level,
cf.\ also \cite[Prop.~A.1]{Bryan-Leung} for an earlier study in
ordinary Gromov-Witten theory.

%---------------------------------------------------------------------------------
\subsubsection{Choice of sections and $\Delta(X)$} We can now use the techniques of previous sections to produce a virtual
decomposition of the fibre over $b_0=0$ of $\scrM(X/B,\beta,\sigma)
\arr B$. However, to be interesting, we should in general choose
the sections to interact with $D$ in a very degenerate way over $b_0$.
In particular, restricting to $b_0$ (which is now the standard log point),
we obtain maps
\[
\sigma_i:b_0 \arr Y^\dagger,
\]
where $Y^\dagger = Y \times O^\dagger$ is the product with the standard log point. Note that 
\[
\Sigma(Y^\dagger)=\Sigma(X)=\Sigma(Y)\times \RR_{\ge 0},
\]
with $\Sigma(X)\arr\Sigma(B)$ the projection to the second factor.
So $\Delta(X)=\Sigma(Y)$ and $\Sigma(\sigma_i):\Sigma(B)\arr\Sigma(X)$
is a section of $\Sigma(X)\arr\Sigma(B)$ and hence is determined
by a point $P_i\in \Delta(X)$, necessarily rationally defined.

%---------------------------------------------------------------------------------
\subsubsection{Tropical fibred product}
We wish to understand the fibred product
\[
\scrM(X/B,\beta,\sigma):=\scrM(X/B,\beta)\times_{X^n} B
\]
at a tropical level. We observe

\begin{proposition}
\label{tropicalproduct}
Let $X,Y$ and $S$ be fs log schemes, with morphisms $f_1:X\arr S$,
$f_2:Y\arr S$. Let $Z=X\times_S Y$ in the category of fs log schemes,
$p_1,p_2$ the projections. Suppose $\bar z\in Z$ with $\bar x= p_1(\bar z)$,
$\bar y=p_2(\bar z)$, and $\bar s=f_1(p_1(\bar z))=f_2(p_2(\bar z))$. Then
\[
\Hom(\overline{\cM}_{Z,\bar z},\NN)=
\Hom(\overline{\cM}_{X,\bar x},\NN)\times_{\Hom(\overline{\cM}_{S,\bar s},\NN)}
\Hom(\overline{\cM}_{Y,\bar y},\NN)
\]
and
\[
\Hom(\overline{\cM}_{Z,\bar z},\RR_{\ge 0})=
\Hom(\overline{\cM}_{X,\bar x},\RR_{\ge 0})\times_{\Hom(\overline{\cM}_{S,\bar s},\RR_{\ge 0})}
\Hom(\overline{\cM}_{Y,\bar y},\RR_{\ge 0}).
\]
\end{proposition}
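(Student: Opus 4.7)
The plan is to identify $\overline{\cM}_{Z,\bar z}$ with a monoid built by standard operations from the characteristic stalks of $X$, $Y$, $S$, and then to check that $\NN$ and $\RR_{\ge 0}$ are ``insensitive'' to those operations when viewed as targets of $\Hom$. Set $P_X := \overline{\cM}_{X,\bar x}$, $P_Y := \overline{\cM}_{Y,\bar y}$, $P_S := \overline{\cM}_{S,\bar s}$, and let $P := P_X \oplus_{P_S} P_Y$ be the pushout in commutative monoids.

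First I would invoke the construction of fibred products in fs log schemes (cf.\ Ogus's foundational book, or \cite{LogStack}) to identify $\overline{\cM}_{Z,\bar z}$ with the sharp, fine, saturated monoid obtained from $P$ by three successive operations: (i) replacing $P$ by its integral image $P^{\mathrm{int}}$ in $P^{\gp}/(\text{torsion})$, (ii) saturating inside this torsion-free group, and (iii) sharpening, i.e.\ dividing by units. The geometric point $\bar z$ picks a connected component of the fs-ification, but the sharp fs pushout itself is independent of that choice.

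Next I would observe that for any sharp, saturated, torsion-free monoid $M$---in particular for $M = \NN$ or $M = \RR_{\ge 0}$---each of (i), (ii), (iii) leaves the Hom-set unchanged. Torsion-freeness of $M$ means every homomorphism $P \to M$ factors uniquely through $P^{\mathrm{int}}$; saturatedness of $M$ means that factorization extends uniquely to $P^{\mathrm{sat}}$; and sharpness of $M$ forces units to map to $0$, so the sharpening step imposes no further condition. Consequently
\[
\Hom(\overline{\cM}_{Z,\bar z},\,M) \;=\; \Hom(P,\,M).
\]

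Finally I would apply the universal property of the pushout $P = P_X \oplus_{P_S} P_Y$ of monoids to conclude
\[
\Hom(P,\,M) \;=\; \Hom(P_X,\,M) \times_{\Hom(P_S,\,M)} \Hom(P_Y,\,M),
\]
which gives both assertions of the proposition at once. The main technical obstacle is step (i): one must justify carefully that at the particular geometric point $\bar z$ chosen, the stalk $\overline{\cM}_{Z,\bar z}$ really is produced by this recipe on characteristic stalks, as opposed to some variant at the level of the log structures $\cM$ before sharpening. This is a standard but slightly fiddly unwinding of the fs-fibred-product construction at a geometric point; once established, the remaining two paragraphs above are entirely formal.
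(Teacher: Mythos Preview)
Your argument is correct, but the paper takes a much shorter route that avoids the step you yourself flag as ``slightly fiddly''. Rather than computing $\overline{\cM}_{Z,\bar z}$ explicitly as a sharp fs pushout and then checking that $\Hom(-,\NN)$ is insensitive to integralization, saturation, and sharpening, the paper simply applies the universal property of the fs fibred product to log morphisms $\bar z^{\dagger}\to Z$, where $\bar z^{\dagger}$ is the geometric point $\bar z$ equipped with the standard log structure $\NN$. Such morphisms (lying over the given scheme point) are in bijection with $\Hom(\overline{\cM}_{Z,\bar z},\NN)$, and by the defining property of $Z=X\times_S Y$ in the fs category they are equally in bijection with compatible pairs of morphisms $\bar z^{\dagger}\to X$ and $\bar z^{\dagger}\to Y$ over $S$, i.e.\ with the fibred product of Hom-sets on the right-hand side. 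The $\RR_{\ge 0}$ statement then follows formally from the $\NN$ statement.

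The advantage of the paper's approach is that it never touches the internal structure of $\overline{\cM}_{Z,\bar z}$: the universal property does all the work, and the whole proof is two sentences. Your approach does yield more, namely an actual description of the stalk, but for the proposition as stated this is unnecessary overhead, and the verification you defer to ``standard but fiddly'' is exactly what the paper's argument eliminates.
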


\begin{proof}
The first statement follows immediately
from the universal property of fibred product applied to maps $\bar
z^{\dagger}\arr Z$, where $\bar z^{\dagger}$ denotes a geometric point
$\bar z$ with standard logarithmic structure. The second statement then follows
from the first.
\end{proof}

%---------------------------------------------------------------------------------
\subsubsection{Tropical moduli space}
We now provide a simple interpretation for the tropicalization of
$S:=\scrM(X/B,\beta,\sigma)$. If $\bar s\in S$ is a geometric point,
let $Q$ be the basic monoid associated with $\bar s$ as a stable logarithmic map
to $X$. Then by Proposition~\ref{tropicalproduct}, we have
\[
\Hom(\overline{\cM}_{S,\bar s},\RR_{\ge 0})
=\Hom(Q,\RR_{\ge 0})\times_{\prod_i \Hom(P_{p_i},\RR_{\ge 0})} \RR_{\ge 0}.
\]
Here as usual $P_{p_i}=\overline{\cM}_{X,\ul f(p_i)}$, which here equals
$\ol\cM_{Y,\ul f(p_i)}\oplus\NN$. The maps defining the fibred product are as
follows. The map $\Hom(Q,\RR_{\ge 0})\arr \prod_i\Hom(P_{p_i},\RR_{\ge 0})$ can
be interpreted as taking a tropical map $\Gamma\arr
\Sigma(X)=\Sigma(Y)\times\RR_{\ge0}$ to the point of
$\Hom(P_{p_i},\RR_{\ge 0})$ which is the image of the contracted edge
corresponding to the marked point $p_i$. The map $\RR_{\ge
0}\arr\prod_i\Hom(P_{p_i},\RR_{\ge 0})$ is $\prod_i \Sigma(\sigma_i)$ and hence
takes $1$ to $((P_1,1),\ldots,(P_n,1)$.

This yields:

\begin{proposition}
Let $m\in\Delta(S)$, and let $\Gamma_C=
\Sigma(\pi)^{-1}(m)$. Then $\Sigma(f):\Gamma_C\arr \Delta(X)$
is a tropical map with the unbounded edges $E_{p_i}$
being mapped to the points $P_i$. Furthermore, as $m$ varies within
its cell of $\Delta(S)$, we obtain the universal family of tropical maps
of the same combinatorial type mapping to $\Delta(X)$ and with the edges
$E_{p_i}$ being mapped to $P_i$.
\end{proposition}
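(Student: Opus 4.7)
The plan is to unwind the fibred product description of $\Sigma(W)$ provided by the preceding proposition and reconcile it with the interpretation of $\Hom(Q,\RR_{\ge 0})$ as a (universal) family of tropical curves mapping to $\Sigma(X)$ developed in \S\ref{Sec:maps-to-tropical-maps} and \S\ref{Sec:basic-gives-universal-tropical}.

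First I would fix a geometric point $\bar w \to W$ with basic monoid $Q$ for the underlying stable logarithmic map, and work stratum by stratum on $\Sigma(W)$. By the preceding proposition applied to $W = \scrM(X/B,\beta) \times_{X^n} B$, the cone of $\Sigma(W)$ containing the image of $\bar w$ fits into a cartesian square in which $\Hom(Q, \RR_{\ge 0})$ parameterizes tropical curves $\Gamma \to \Sigma(X)$ of the combinatorial type of $(C/W,\mathbf{p},f)$ at $\bar w$ (by basicness, \S\ref{Sec:basic-gives-universal-tropical}), and the horizontal map to $\prod_i \Hom(P_{p_i}, \RR_{\ge 0})$ is induced by the evaluation morphisms $\ev_i$. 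The step requiring care is to check that, under these identifications, this horizontal map sends a tropical curve $h: \Gamma \to \Sigma(X)$ to the tuple $(h(E_{p_i}))_i$ of images of its legs; this is a direct unwinding of the definition of the basic monoid \eqref{Eqn: Basic monoid}, together with the formula for $\Sigma(f)$ on legs in Definition \ref{tropcurveinsigmaxdef}(6)(c), using that $u_{p_i} = 0$ so the leg is a constant map to $h(v)$ for $v$ the vertex containing $E_{p_i}$.

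Next I would identify the other map in the cartesian square, $\Sigma(\sigma_i): \Sigma(B) = \RR_{\ge 0} \to \Hom(P_{p_i}, \RR_{\ge 0})$. Since $\sigma_i$ is a section of $X \to B$ over $b_0$, by construction $\Sigma(\sigma_i)(1) = P_i \in \Delta(X)$, hence this map sends $t \mapsto tP_i$. Thus a point $m \in \Sigma(W)$ mapping to $t \in \Sigma(B)$ corresponds precisely to a pair consisting of a tropical curve $\Sigma(f): \Gamma_C \to \Sigma(X)$ of the given combinatorial type together with the constraint $\Sigma(f)(E_{p_i}) = tP_i$ for each $i$.

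For statement (1), setting $m \in \Delta(W) = \rho^{-1}(1)$ forces $t = 1$, yielding $\Sigma(f)(E_{p_i}) = P_i$ as claimed. For statement (2), letting $m$ vary over the interior of its cell in $\Delta(W)$ traces out exactly the fibre over $(P_1,\ldots,P_n) \in \prod_i \Hom(P_{p_i},\RR_{\ge 0})$ of the tropical evaluation map; by the universality recalled in \S\ref{Sec:basic-gives-universal-tropical}, these are precisely all tropical curves of the given combinatorial type passing through the prescribed points $P_i$. The main obstacle, as noted above, is verifying that the tropicalization of the scheme-theoretic evaluation morphism indeed coincides with the combinatorial evaluation $h \mapsto (h(E_{p_i}))_i$; once this is in hand, both statements are immediate from the fibred product description.
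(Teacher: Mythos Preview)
Your argument is correct and coincides with the paper's own reasoning: the paper does not give a separate formal proof but simply writes ``This yields:'' after the paragraph computing $\Hom(\overline{\cM}_{W,\bar w},\RR_{\ge 0})$ as a fibred product and identifying the two structure maps exactly as you do (the tropical evaluation $h\mapsto (h(E_{p_i}))_i$ and $\Sigma(\sigma_i)(1)=P_i$). Your write-up is a faithful expansion of that paragraph, including the observation that $u_{p_i}=0$ forces each leg $E_{p_i}$ to be contracted to the image of its vertex.
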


%---------------------------------------------------------------------------------
\subsubsection{Restatement of the decomposition formula}
Denote by
\[
\scrM(Y^\dagger/b_0,\beta,\sigma) :=  \scrM(X_0, \beta) \times_{X^n} B
\]
and for $\ttau=(\tau,\bA)$ a decorated type of a rigid tropical map (Definition~\ref{Def: type of tropical map}),
\[
\scrM(Y^\dagger,\ttau,\sigma) :=  \scrM(X_0, \ttau) \times_{X^n} B.
\]

Theorem~\ref{Thm: Main} now translates to the following:

\begin{theorem}[The logarithmic decomposition formula for point conditions]
\label{Th:decomposition-points} 
Suppose $Y$ is logarithmically smooth. Then
\[
[\scrM(Y^\dagger/b_0,\beta,\sigma)]^\virt = \sum_{\ttau=(\tau,\bA)}
\frac{m_\tau}{|\Aut(\tau)|}\, {j_\ttau}_*[\scrM(Y^\dagger,\ttau,\sigma)]^\virt.
\]
\end{theorem}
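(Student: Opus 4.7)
The strategy is to obtain the point-constrained decomposition as a refined Gysin pullback of the unconstrained decomposition in Theorem~\ref{Th:decomposition}, applied to the fibre $X_0/b_0$. Restricting the section $\sigma: B \to X^n$ to $b_0$ gives a morphism $\sigma_0: b_0 \to X_0^n$, and by construction of the fibred product in the category of fs log schemes,
\[
\scrM(Y^\dagger/b_0,\beta,\sigma) \;=\; \scrM(X_0/b_0,\beta) \times_{X_0^n} b_0,
\]
with analogous formulas for each marked stratum $\scrM_{\tau,\bA}(X_0/b_0,\beta)$. The plan is to show that the virtual class on $\scrM(Y^\dagger/b_0,\beta,\sigma)$ arising from the obstruction theory~\eqref{Eq:point-condition} equals $\sigma_0^!\,[\scrM(X_0/b_0,\beta)]^\virt$, and similarly for each summand.

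First I would treat the obstruction theory. The complex in~\eqref{Eq:point-condition} sits in a distinguished triangle
\[
E^\bullet_{\scrM(X/B,\beta,\sigma)/\fM_B} \;\to\; E^\bullet_{\scrM(X/B,\beta)/\fM_B}|_{\scrM(X/B,\beta,\sigma)} \;\to\; \bigoplus_i \ev_i^* T_{X/B},
\]
exhibiting the obstruction theory for $\scrM(X/B,\beta,\sigma)$ as the cone of the restriction map along $\sigma$. By Behrend--Fantechi or Manolache compatibility, this is exactly the package needed so that $[\scrM(X/B,\beta,\sigma)]^\virt = \sigma^!\,[\scrM(X/B,\beta)]^\virt$ in the sense of the refined Gysin map associated to the regular embedding $\sigma: B \hookrightarrow X^n$ (with the standard smoothness of $X \to B$ guaranteeing regularity). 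Restricting to $b_0$ via Theorem~\ref{Th:deformation} and using that Gysin pullback commutes with restriction to a fibre of $B$, one obtains $[\scrM(Y^\dagger/b_0,\beta,\sigma)]^\virt = \sigma_0^!\,[\scrM(X_0/b_0,\beta)]^\virt$. The same argument applied with $\scrM_{\tau,\bA}(X_0/b_0,\beta)$ in place of $\scrM(X_0/b_0,\beta)$ (using the compatible obstruction theories constructed relative to $\fM_{\tau}(\cX_0/b_0,\beta')$) yields
\[
[\scrM_{\tau,\bA}(Y^\dagger/b_0,\beta,\sigma)]^\virt \;=\; \sigma_0^!\,[\scrM_{\tau,\bA}(X_0/b_0,\beta)]^\virt.
\]

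Next, I would apply $\sigma_0^!$ to the identity~\eqref{Eq:decomposition} and commute it past the proper pushforwards $(i_{\tau,\bA})_*$, using the base-change compatibility of refined Gysin maps with proper pushforward. Since $\sigma_0$ is a closed immersion into the smooth stack $X_0^n$ and all maps are proper, this commutation is standard. The result is exactly
\[
[\scrM(Y^\dagger/b_0,\beta,\sigma)]^\virt \;=\; \sum_{\tau \in \Omega} m_\tau \sum_{\bA \vdash A} (i_{\tau,\bA})_*[\scrM_{\tau,\bA}(Y^\dagger/b_0,\beta,\sigma)]^\virt,
\]
as required.

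The main technical obstacle will be verifying the compatibility of the two obstruction theories under the formation of the fibred product with $X^n$, i.e.\ that the obstruction theory~\eqref{Eq:point-condition} really does arise as the cone I claimed and is compatible in the sense required by Manolache's refinement of Costello's formula~\cite[Thm.~4.1]{Mano}. Once that compatibility is in place, the distribution of $\sigma_0^!$ over the finite sum of pushforwards and the passage through the quotient by $\Aut(\ttau,\bA)$ (as in the proof of Theorem~\ref{Th:main-variant}) are formal.
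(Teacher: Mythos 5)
Your overall strategy is sound and matches the spirit of the paper, which regards Theorem~\ref{Th:decomposition-points} as a direct translation of Theorem~\ref{Th:decomposition}; the paper itself provides no detailed proof, so your filling-in is reasonable. However, there are two places where the proposal as written is not quite right.

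First, the parenthetical claim that ``the standard smoothness of $X\to B$'' makes $\sigma\colon B\hookrightarrow X^n$ a regular embedding does not hold: $X\to B$ is only \emph{logarithmically} smooth, and the underlying scheme $\uX$ (hence $\uX^n$) can be badly singular (e.g.\ when $Y$ is a reducible toric variety); moreover the sections $\sigma_i$ are in the degenerate applications deliberately chosen to land in the log boundary of $X_0$. So $\ul\sigma_0$ need not be a regular embedding, and the ordinary refined Gysin class $\sigma_0^!$ is not automatically defined. The fix is either to use Manolache's virtual pullback along $\sigma$ equipped with the log perfect obstruction theory $\sigma^*\Omega_{X^n/B}[1]$, or --- more in the spirit of the proof of Proposition~\ref{Prop:decomposition-1} --- to sidestep the issue entirely by re-running the entire argument of Section~\ref{sec:mainth} with the obstruction theory~\eqref{Eq:point-condition} in place of $R\pi_*f^*T_{X/B}$, still taken relative to $\fM_{b_0}$. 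The latter is cleaner, since the only inputs to that proof are the Cartesian diagrams over $\fM_{b_0}$ and $\fM_m$ and the perfection of the relative obstruction theory, all of which go through unchanged with~\eqref{Eq:point-condition}.

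Second, a smaller point: the distinguished triangle you write has the wrong third term and the arrows reversed. Dualizing $K\to f^*T_{X/B}\to\bigoplus_i(f^*T_{X/B})|_{p_i}\to K[1]$ and applying $R\pi_*$ gives a triangle of the form
\[
\bigoplus_i\ev_i^*\Omega_{X/B}\longrightarrow \bE^\bullet|_{\scrM(X/B,\beta,\sigma)}\longrightarrow E^\bullet\longrightarrow \bigoplus_i\ev_i^*\Omega_{X/B}[1],
\]
so it is the log \emph{conormal} $\bigoplus_i\ev_i^*\Omega_{X/B}$ of the section, not the tangent bundle, that appears --- consistently with the cotangent complex $\LL_{B/X^n}\simeq\sigma^*\Omega_{X^n/B}[1]$ needed for the compatibility criterion. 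This is the triangle one needs to feed into the Behrend--Fantechi/Manolache machinery. With these two corrections your argument goes through.
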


\begin{example}
\label{Expl: NS}
The above discussion allows a reformulation of the approach of \cite{NS06} to
tropical counts of curves in toric varieties. Take $Y$ to be a toric variety
with the toric logarithmic structure, and fix a curve class $\beta$. By fixing
an appropriate number $n$ of points in $Y$, one can assume that the
expected dimension of the moduli space of curves of genus $0$
and class $\beta$ passing through these points is
$0$. Next, after choosing suitable degenerating
sections $\sigma_1,\ldots,\sigma_n$, one obtains points $P_1,\ldots,P_n
\in\Sigma(Y)$, the fan for $Y$. Finally, one explicitly describes
$\scrM(X/B,\beta,\sigma)$ through an analysis for each rigid tropical map to
$\Sigma(Y)$ with the correct topology. In particular, the domain curve
is rational and should have $D_{\rho}\cdot\beta$ unbounded edges
parallel to a ray $\rho\in\Sigma(Y)$, where $D_{\rho}\subset Y$ is the
corresponding divisor. The argument of \cite{NS06} essentially carries out an
explicit analysis of possible logarithmic curves associated with each such rigid
curve after a log blow-up $\tilde Y^\dagger\arr Y^\dagger$.
Theorem~\ref{Th:decomposition-points} also generalizes part of
\cite{NS06} to some higher genus cases, with the determination of the
contribution of individual maps left open.
\end{example}

%---------------------------------------------------------------------------------
\subsection{An example in \texorpdfstring{$\FF_2$}{F2}}
\label{Subsect: FF2}

We now consider a very specific case of \S\ref{pointdegensection} above.
This example deliberately deviates slightly from the toric case of
Example~\ref{Expl: NS} and exhibits new phenomena. 

%---------------------------------------------------------------------------------
\subsubsection{A non-toric logarithmic structure on a Hirzebruch surface} Let $Y$
be the Hirzebruch surface $\FF_2$. Viewed as a toric surface, it has $4$
toric divisors, which we write as $f_0$, $f_{\infty}$, $C_0$ and $C_{\infty}$.
Here $f_0,f_{\infty}$ are the fibres of $\FF_2\arr \PP^1$ over
$0$ and $\infty$, $C_0$ is the unique section with self-intersection $-2$,
and $C_{\infty}$ is a section disjoint from $C_0$, with $C_{\infty}$ 
linearly equivalent to $f_0+f_{\infty}+C_0$. 

We will give $Y$ the (non-toric) divisorial logarithmic structure coming from
the divisor $D=f_0+f_{\infty}+C_{\infty}$, deliberately omitting $C_0$.

%---------------------------------------------------------------------------------
\subsubsection{The curves and their marked points} We will consider rational
curves representing the class $C_{\infty}$ passing through $3$ points
$y_1,y_2,y_3$. Of course there should be precisely one such curve. 

A general curve of class 
$C_{\infty}$ will intersect $D$ in four points, so we will set this up
as a logarithmic  Gromov-Witten problem by considering genus $0$ stable logarithmic maps
\[
f:(C,p_1,p_2,p_3,x_1,x_2,x_3,x_4)\arr Y,
\]
imposing the condition that $f(p_i)=y_i$, and $f$ is constrained to be
transversal to $f_0, f_{\infty}, C_{\infty}$ and $C_{\infty}$ at $x_i$ for
$i=1,\ldots, 4$, respectively. This transversality determines the vectors
$u_{x_i}$, while we take the contact data $u_{p_i}=0$.

Since the maps have the points $x_3$ and $x_4$ ordered, we expect the final count to amount to $2$ rather than $1$.

%---------------------------------------------------------------------------------
\subsubsection{Choice of degeneration} We will
now see what happens when we degenerate the point 
conditions
as in \S\ref{pointdegensection}, by taking $X=Y\times \AA^1$ and considering
sections $\sigma_i:\AA^1\arr X$, $1\le i \le 3$. We choose
these sections to be general subject to the condition that
\[
\sigma_1(0)\in f_0, \quad \sigma_2(0)\in f_{\infty},\quad \sigma_3(0)\in C_0.
\]
Since $C_0\cap C_{\infty}=\emptyset$, any curve in the linear system
$|C_{\infty}|$ which passes through this special choice of $3$ points
must contain $C_0$, and hence be the curve $f_0+f_{\infty}+C_0$.

%---------------------------------------------------------------------------------
\subsubsection{The complex $\Delta(X)$ and the tropical sections}
Note that $\Delta(X)$ is as depicted in Figure~\ref{DeltaF2}, an abstract gluing
of two quadrants, {not linearly embedded in the plane}. The choice of sections
$\sigma_i$ determines points $P_i\in\Sigma(X)$ as explained in
\S\ref{pointdegensection}. For example, if, say, the section $\sigma_1$ is
transversal to $f_0\times \AA^1$, then $P_1$ is the point at distance $1$ from
the origin along the ray corresponding to $f_0$. Since $C_0$ is not part of the
divisor determining the logarithmic structure, $P_3$ is in fact the origin.

\begin{figure}
\input{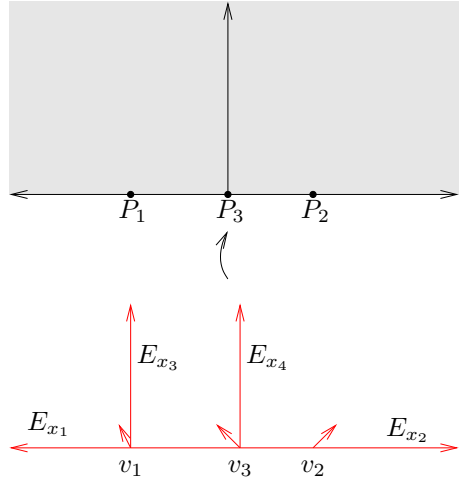}
\caption{The polyhedral complex $\Delta(X)=\Sigma(Y)$ and a potential tropical
map. The small arrows indicate $E_{p_i}$ which are contracted to $P_i$. The
suggested positions of $E_{x_3}$ and $E_{x_4}$ are shown below to contribute $0$
to the virtual count.}
\label{DeltaF2}
\end{figure}

%---------------------------------------------------------------------------------
\subsubsection{The tropical maps}
One then considers rigid decorated tropical maps passing through these points. 
\begin{itemize} 
\item
The curves must have $7$ unbounded edges, $E_{p_i}$,  $E_{x_j}$.
\item
The map contracts $E_{p_i}$ to $P_i$.
\item
Each $E_{x_j}$ is mapped to
an unbounded ray going to infinity in the direction indicating which of
the three irreducible components of $D$ the point $x_j$ is mapped to.
\end{itemize}

%---------------------------------------------------------------------------------
\subsubsection{Rigid tropical maps}
It is then easy to see that to be rigid, the domain of the tropical map must have
three vertices, $v_1,v_2,v_3$, with the edge $E_{p_i}$ attached to $v_i$
and $v_i$ necessarily being mapped to $P_i$. 

The location of the $E_{x_i}$ is less clear. One can show using the balancing
condition \cite[Prop.~1.15]{GS} that $E_{x_1}$ must be attached to $v_1$
and $E_{x_2}$ must be attached to $v_2$. There remains, however, some choice
about the location of $E_{x_3}$ and $E_{x_4}$. Indeed, they may be attached to
the vertices $v_1$, $v_2$ or $v_3$ in any manner. Figure~\ref{DeltaF2} shows one
such choice.

%---------------------------------------------------------------------------------
\subsubsection{Decorated rigid tropical maps}
We must however consider \emph{decorated} rigid tropical maps, and in particular
we need to assign curve classes $\bA(v)$ to each vertex $v$. Let $n_i$ be
the number of edges in $\{E_{x_3},E_{x_4}\}$ attached to the vertex $v_i$. Since
$E_{x_3}$ and $E_{x_4}$ indicate which ``virtual'' components of the domain
curve have marked points mapping to $C_{\infty}$, it then becomes clear that
\[
\bA(v_1)=n_1 f,\quad \bA(v_2)=n_2 f,\quad \bA(v_3)=C_0+n_3f,
\]
where $n_1+n_2+n_3 = 2$.

%---------------------------------------------------------------------------------
\subsubsection{The seeming contradiction} In fact, as we shall see shortly,
there are logarithmic curves whose tropicalization yields any one of the curves
with $n_1=n_2=1$, and there is no logarithmic curve \emph{over the standard log
point} whose tropicalization is the tropical map with $n_3=2$. Surprisingly at
first glance, the only decorated rigid tropical map which provides a non-trivial
contribution to the Gromov-Witten invariant is the one which cannot be realised,
with $n_3=2$. We will also see that the case $n_1=2$ or $n_2=2$ plays no role.
Before we exhibit this counterintuitive behavior, we point out that this is no
contradiction. Indeed, consider a stable log map to $X_0$ with non-rigid
tropicalization. This stable log map will lie in the intersection of the images
in $\scrM(X_0,\beta)$ of more than one subspace $\scrM_\tau(X_0,\beta)$ from
\eqref{Eqn: scrM_tau(X_0)}. Tropical geometry cannot thus tell how these stable log
maps with non-rigid tropicalizations contribute to the virtual count on any
of these components.

%---------------------------------------------------------------------------------
\subsubsection{Curves with $n_1=n_2=1$ contribute 0} To exhibit this
seemingly contradictory behavior, first recall the standard fact that
there is a flat family $\ul{W}\arr \ul{\AA}^1$ such that $\ul{W}_0\simeq \ul
X_0=\FF_2$ and $\ul{W}_t\simeq\ul{\PP}^1\times\ul{\PP}^1$ for $t\not=0$.
Furthermore, the divisor $f_0\cup f_{\infty}\cup C_{\infty}$ extends to a normal
crossings divisor on $\ul{W}$ with three irreducible components: $\{0\}\times
\PP^1, \{\infty\}\times \PP^1,$ and a curve of type $(1,1)$. This endows
$\ul{W}$ with a divisorial logarithmic structure, logarithmically smooth over
$\AA^1$ with the trivial logarithmic structure. However, no curve of class $C_0$
in $W_0$ deforms to $W_t$ for $t\not=0$. Hence no curve representing a point in
the moduli space $\scrM_{\tau}(X_0,\beta)$ for $\tau$ one of the decorated rigid
tropical maps with $n_3=0$ deforms. The usual deformation invariance
of Gromov-Witten invariants then implies that the contribution to the
Gromov-Witten invariant from such a $\tau$ is zero.

Another transparent explanation for the vanishing of this count is
given by the gluing formalism further developed in \cite{punctured}:
The moduli space of punctured stable maps corresponding to the $(-2)$-curve has
negative virtual dimension and by \cite[Thm.~A.16]{associativity} any moduli
space of stable maps with such a component has vanishing virtual count.

%---------------------------------------------------------------------------------
\subsubsection{Expansion and description of moduli space for $n_1=n_2=1$} To explore the
existence of the relevant logarithmic curves, we again turn to
\S\ref{calculationalsection}. First let us construct a curve whose decorated
tropical map has $n_1=n_2=1$. The image of this curve in $\Delta(X)$ yields a
subdivision of $\Delta(X)$ which in turn yields a refinement of $\Sigma(X)$, and
hence a log \'etale morphism $\tilde X\arr X$. It is easy to see that
this is just a weighted blow-up of $f_0\times \{0\}$ and $f_{\infty}\times
\{0\}$ in $X=Y\times\AA^1$; the weights depend on the precise location of $P_1$
and $P_2$, but if they are taken to have distance 1 from the origin, the
subdivision will correspond to an ordinary blow-up. The central fibre is now as
depicted in Figure~\ref{F2curvebad}, with the proper transforms of the sections
meeting the central fibre at the points $p_1,p_2,p_3$ as depicted.

\begin{figure}
\input{F2curvebad.pspdftex}
\caption{}
\label{F2curvebad}
\end{figure}

The logarithmic curve then has three irreducible components, one mapping to
$C_0$ and the other two mapping to the two exceptional divisors, each isomorphic
to $\PP^1\times\PP^1$. These latter two components each map isomorphically to a
curve of class $(1,1)$ on the exceptional divisor, and is constrained to pass
through $p_i$ and the point where $C_0$ meets the exceptional divisor. There is
in fact a pencil of such curves. We remark that all $7$ marked points are
visible in Figure~\ref{F2curvebad}, but the curves in the exceptional
divisors meet the left-most and right-most curves transversally, and not tangent
as it appears in the picture. By Theorem~\ref{curveconstruction}, {any such}
stable map then has a log enhancement, and composing with the map $\tilde X\arr
X$ gives a stable logarithmic map over the standard log point whose
tropicalization is one of the rigid curves with $n_1=n_2=1$. Thus the relevant
moduli space of stable log maps to $\tilde X_0$ has two components, each
isomorphic to $\PP^1 \times \PP^1$, depending on which sides $x_3$ and $x_4$
lie. This moduli space maps injectively to the moduli space of stable log maps
to $X_0$.

To see that the virtual count gives $0$, one can again consider the
absolute deformation and obstruction theory of all the maps parameterized by
$\PP^1 \times \PP^1$. Over the open subset $\CC^* \times \CC^*$ the maps admit a
normal line bundle with degrees $2,-2,2$ on the three components of the curve.
To account for the point conditions we twist down by the points $x_i$, obtaining
a line bundle $N$ of degrees $1,-3,1$ respectively. Restricting to the middle
components gives an isomorphism of the obstruction space $H^1(C, N) \to
H^1(\PP^1, \cO(-3)) = \CC^2$. One checks that the isomorphism extends across the
boundary of $\PP^1 \times \PP^1$, giving a trivial obstruction bundle with zero
Chern class representing the virtual fundamental class 0.

%---------------------------------------------------------------------------------
\subsubsection{Curves with $n_1=n_2=0$}
Now consider the case that $n_1=n_2=0$ and $n_3=2$. This {rigid tropical} curve
cannot be realised as the tropicalization of a stable logarithmic map over the
standard log point. Indeed, to be realised, the curve must have an irreducible
component of class $C_0+2f=C_{\infty}$, and we know there is no such curve
passing through $\sigma_3(0)$, a general point on $C_0$. However, this tropical
map can in fact be realised as a degeneration of a different,
non-rigid tropical map, as depicted in Figure~\ref{F2curvegood1}.

\begin{figure}
\input{F2curvegood1.pspdftex}
\caption{}
\label{F2curvegood1}
\end{figure}

To construct an actual logarithmic curve with $n_1=n_2=0$, we use
refinements again. Assume for simplicity of the discussion that $P_1$ and $P_2$
have been taken to have distance $2$ from the origin. Subdivide $\Delta(X)$ by
introducing vertical rays with endpoints $P_1$ and $P_2$, and in addition
introduce vertical rays which are the images $E_{x_3}$ and $E_{x_4}$; again for
simplicity of the discussion take the {endpoints} of these rays to be at
distance $1$ from the origin. 

This corresponds to a blow-up $\tilde X\arr X$ involving four exceptional
components, and Figure~\ref{F2curvegood2} shows the central fibre of $\tilde
X\arr \AA^1$, along with the image of a stable logarithmic map which
tropicalizes appropriately (once again the curves on the second and
fourth components of $\tilde X$ meet the first and fifth components with order
1, and no tangency). Composing this stable logarithmic map with $\tilde
X\arr X$ then gives a non-basic stable logarithmic map to $X$ over the
standard log point. It is not hard to see that the corresponding basic monoid
$Q$ has rank $3$, parameterizing the image of the curve in $\Sigma(B)$ as well
as the location of the edges $E_{x_3}$ and $E_{x_4}$. The degenerate tropical
curve where the edges $E_{x_3}$ and $E_{x_4}$ are attached to the vertex $v_3$
represents a one-dimensional face of $Q^{\vee}$, so the rigid tropical map
with $n_3=2$ does appear in the family $Q^{\vee}$, but only as a degeneration of
a tropical map which is realisable by an actual stable logarithmic curve over
the standard log point.

One can again show that the relevant moduli space in $\tilde X_0$ has two
components isomorphic to $\PP^1 \times \PP^1$. This time the virtual fundamental
class of each component is the top Chern class of $\cO(1)\boxplus \cO(1)$, which
has degree 1. Each of these maps to $\tilde X_0$ define the same map to
$X_0$, and indeed the corresponding moduli space $\scrM_{n_i=0}(X_0)$ is
discrete and unobstructed.

\begin{figure}
\input{F2curvegood2.pspdftex}
\caption{}
\label{F2curvegood2}
\end{figure}

%---------------------------------------------------------------------------------
\subsubsection{Curves with $n_1+n_2=1$} In this case $\bA(v_3)=C_0+n_3 f=C_0+f$
and either $\bA(v_1)=f,\bA(v_2)=0$ or $\bA(v_1)=0,\bA(v_2)=f$. The expanded
degeneration picture then looks like a hybrid of Figures~\ref{F2curvebad}
and~\ref{F2curvegood1}, with the depicted behavior describing one end each. A
computation similar to the one presented for the case $n_1=n_2=1$ shows
vanishing of this count as well.

%---------------------------------------------------------------------------------
\subsubsection{Curves with $n_i=2$} To complete the analysis, we end by noting
that the case $n_1=2$ or $n_2=2$ cannot occur. Consider the case $n_1=2$. Any
stable logarithmic curve over the standard log point with a tropicalization
which degenerates to such a rigid tropical map must have a decomposition into
unions of irreducible components corresponding to the vertices $v_1, v_2$ and
$v_3$, with the homology class of the image of the stable map restricted to each
of these unions of irreducible components being $2[f_0]$, $0$ and $[C_0]$
respectively. In particular, this will prevent the possibility of having any
irreducible component whose image contains $\sigma_2(0)$. Thus this case does
not occur.

%=================================================================================

\end{document}